\numberwithin{equation}{section}
\newtheorem{prop}{Proposition}
\newtheorem{lemma}[prop]{Lemma}
\newtheorem{thm}[prop]{Theorem}
\newtheorem{cor}[prop]{Corollary}
\numberwithin{prop}{section}
\theoremstyle{definition}
\newtheorem{defn}[prop]{Definition}
\newtheorem{rmk}[prop]{Remark}
\newcommand{\del}{\partial}
\newcommand{\delb}{\bar{\partial}}\newcommand{\dt}{\frac{\partial}{\partial t}}
\newcommand{\brs}[1]{\left| #1 \right|}
\renewcommand{\gg}{\gamma}
\newcommand{\gD}{\Delta}
\newcommand{\gd}{\delta}
\newcommand{\gs}{\sigma}
\newcommand{\gL}{\Lambda}
\newcommand{\gk}{\kappa}
\newcommand{\gl}{\lambda}
\newcommand{\gw}{\omega}
\newcommand{\ga}{\alpha}
\newcommand{\gb}{\beta}
\renewcommand{\ge}{\epsilon}
\newcommand{\N}{\nabla}
\newcommand{\FF}{\mathcal F}
\newcommand{\GG}{\mathcal G}
\newcommand{\WW}{\mathcal W}
\newcommand{\til}[1]{\widetilde{#1}}
\renewcommand{\bar}[1]{\overline{#1}}
\renewcommand{\i}{\sqrt{-1}}
\newcommand{\IP}[1]{\left<#1\right>}
\DeclareMathOperator{\Rc}{Rc}
\DeclareMathOperator{\Rm}{Rm}
\DeclareMathOperator{\tr}{tr}
\DeclareMathOperator{\divg}{div}
\DeclareMathOperator{\Vol}{Vol}
\DeclareMathOperator{\diam}{diam}
\DeclareMathOperator{\Area}{Area}
\begin{document}

\title[Ricci-Yang-Mills flow on surfaces and pluriclosed flow on elliptic fibrations]{Ricci-Yang-Mills flow on surfaces and pluriclosed flow on elliptic fibrations}

\begin{abstract} We give a complete description of the global existence and convergence for the Ricci-Yang-Mills flow on $T^k$ bundles over Riemann surfaces.  These results equivalently describe solutions to generalized Ricci flow and pluriclosed flow with symmetry.
\end{abstract}

\author{Jeffrey Streets}
\address{Rowland Hall\\
         University of California, Irvine\\
         Irvine, CA 92617}
\email{\href{mailto:jstreets@uci.edu}{jstreets@uci.edu}}

\thanks{We gratefully acknowledge support from the NSF via DMS-1454854}

\date{\today}

\maketitle

\section{Introduction}

Suppose $\mathcal G \to M \to \Sigma$ is the total space of a principal $\mathcal G$-bundle.  Given a choice $h$ of metric on the Lie algebra $\mathfrak g$ of $\GG$, a one-parameter family of metrics $g_t$ on $\Sigma$ and principal connections $\mu_t$ satisfies the \emph{Ricci-Yang-Mills flow (RYM flow)} \cite{StreetsThesis, YoungThesis} if
\begin{gather} \label{f:RYM}
\begin{split}
\dt g =&\ -2 \Rc_g + F_{\mu}^2,\\
\dt \mu =&\ - d^*_{g} F_{\mu},
\end{split}
\end{gather}
where $F_{\mu}$ denotes the curvature of $\mu$, and $F_{\mu}^2 = \tr_h \tr_{g^{-1}} F_{\mu} \otimes F_{\mu}$.
This system of equations has arisen in mathematical physics literature in the study of renormalization group flows.  Also, this flow arises by modifying the Ricci flow of an invariant metric on a principal bundle to fix the metric on the fibers.  This modification is a natural way to simplify the equation to understand the geometry of principal bundles, while of course the Ricci flow of invariant metrics is also natural in understanding collapsing limits and has been significantly studied (cf. \cite{LottDR}).  Further results on RYM flow appear in \cite{JYRYMsol,StreetsRGflow,StreetsRYMsurfaces,YoungSRYM}.

A second origin of these equations is as a symmetry reduction of the \emph{generalized Ricci flow} (cf. \cite{GRFbook}).  Given a smooth manifold $M$, a one-parameter family of metrics $G_t$ and closed three-forms $H_t$ is a solution of generalized Ricci flow if
\begin{gather} \label{f:GRF}
\begin{split}
\dt G =&\ -2 \Rc_G + \tfrac{1}{2} H^2,\\
\dt H =&\ \gD_d H,
\end{split}
\end{gather}
where $H^2 = \tr_{g^{-1}} \tr_{g^{-1}}H \otimes H$.
This system of equations also arises in the study of renormalization group flows \cite{Friedanetal}, coupling the Ricci flow for a metric with the natural heat flow for a closed three-form.  As discussed in \S \ref{s:setup} below, given a solution to RYM flow $(g_t, \mu_t)$ over a Riemann surface, the pairs
\begin{align*}
G_t = \pi^* g_t + \tr_h \mu_t \otimes \mu_t, \qquad H_t = \tr_h F_{\mu} \wedge \mu
\end{align*}
satisfy generalized Ricci flow.

A third origin of Ricci-Yang-Mills flow is complex geometry, where it arises as a special case of \emph{pluriclosed flow} \cite{PCF}.  The pluriclosed flow is a natural evolution equation generalizing the K\"ahler-Ricci flow to complex, non-K\"ahler manifolds.  Given $(M^4, J)$ a complex surface, a one-parameter family of Hermitian, pluriclosed metrics $\gw_t$ satisfies pluriclosed flow if
\begin{align*}
\dt \gw =&\ - \rho_B^{1,1},
\end{align*}  
where $\rho_B$ is the Ricci curvature of the Bismut connection.  In \cite{PCFReg} it was shown that after a certain gauge modification pluriclosed flow is in fact equivalent to generalized Ricci flow.  Furthermore, it was shown in \cite{Streetssolitons} that the pluriclosed flow of $T^2$-invariant metrics on complex surfaces reduces to the RYM flow.  What is somewhat surprising is that the pluriclosed flow, defined in general with no symmetry considerations in mind, naturally fixes the metric on the Lie algebra and results in the RYM flow in this setting.

In this paper, building on prior results (cf. \cite{StreetsRYMsurfaces}), we give a complete description of the Ricci-Yang-Mills flow for $T^k$-bundles over Riemann surfaces.  A complete description of the Ricci flow on Riemann surfaces was achieved by Hamilton/Chow \cite{ChowRFSurfaces, HamiltonRFSurfaces}, with other approaches coming later (cf. \cite{BartzRF, StruweFlows}).  Also, a complete description of Yang-Mills flow over Riemann surfaces was given by R\aa de \cite{Rade}.  We first state the main result describing the behavior of Ricci-Yang-Mills flow:

\begin{thm} \label{t:RYMthm} Let $T^k \to M \to \Sigma$ denote a principal $T^k$-bundle over a Riemann surface $\Sigma$, and fix $h$ a choice of metric on $\mathfrak{t}^k$.  Let $g_0$ denote a Riemannian metric on $\Sigma$ and $\mu_0$ a choice of principal connection.  We let $(g_t, \mu_t)$ denote the solution to Ricci-Yang-Mills flow with this initial data, and $G_t = \pi^* g_t + \tr_h \mu_t \otimes \mu_t$ the associated one-parameter family of invariant metrics on $M$.  The following hold:
\begin{enumerate}
\item If $\chi(\Sigma) < 0$ then $(g_t, \mu_t)$ exists on $[0,\infty)$ and $(M, \frac{G_t}{2t})$ converges in the Gromov-Hausdorff topology to $(\Sigma, g_{\Sigma})$, where $g_{\Sigma}$ denotes the canonical metric of constant curvature $-1$.
\item If $\chi(\Sigma) = 0$ then $(g_t, \mu_t)$ exists on $[0,\infty)$ and $(M, \frac{G_t}{2t})$ converges in the Gromov-Hausdorff topology to a point.
\item If $\chi(\Sigma) > 0$ and $c_1(M) = 0$, then there exists $T < \infty$ such that $(g_t, \mu_t)$ exists on $[0,T)$, and $(M, \frac{1}{T - 2t}G_t)$ converges in the $C^{\infty}$ topology to $(\Sigma \times \mathbb R, g_{\Sigma} \times g_{\mathbb R})$, where $g_{\Sigma}$ denotes a metric of constant curvature $1$.
\item If $\chi(\Sigma) > 0$, and $c_1(M) \neq 0$, then $(g_t, \mu_t)$ exists on $[0,\infty)$ and there are constants $\gl_i$ and a one-parameter family of diffeomorphisms $\phi_t$ such that
\begin{align*}
\lim_{t \to \infty} \phi_t^* g_t =&\ \gl_1 g_{\Sigma}\\
\lim_{t \to \infty} \phi_t^* F_{\mu_t} =&\ \gl_2 \gw_{\Sigma},
\end{align*}
where $g_{\Sigma}$ denotes a metric of constant curvature $1$, and $\gw_{\Sigma}$ denotes the associated area form.
\end{enumerate}
\end{thm}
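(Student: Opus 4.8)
The final statement is the case $\chi(\gS) > 0$ (so $\gS \cong S^2$) with $c_1(M) \neq 0$, and my plan is to reduce the flow to a coupled scalar system, prove long-time existence, and then extract convergence to the static solution modulo the conformal automorphism group of $S^2$. First I would record the structural simplifications available over a surface with abelian structure group. Since $\gS$ is two-dimensional, $F_\mu$ is a $\mathfrak{t}^k$-valued two-form, so $*_g F_\mu =: w$ is a $\mathfrak{t}^k$-valued function, and a direct computation gives $F_\mu^2 = \brs{F_\mu}^2 g$. Consequently the metric equation collapses to the purely conformal evolution $\dt g = (-2K + \brs{F_\mu}^2) g$, where $K$ is the Gauss curvature, so the flow preserves the conformal class; writing $g = e^{2u} g_\gS$ for the round background $g_\gS$ reduces this to a scalar equation for $u$. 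Simultaneously the connection equation yields $\dt F_\mu = - \gD_d F_\mu$, a heat-type flow that preserves the de Rham class $[F_\mu] = 2\pi c_1(M)$; this fixed flux is the topological input that distinguishes case (4) from case (3).

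Next I would establish long-time existence by controlling the total area $A(t) = \Area(\gS, g_t)$. The Gauss--Bonnet theorem gives $\tfrac{d}{dt} A = -4\pi \chi(\gS) + \int_\gS \brs{F_\mu}^2 \, dV_g = -8\pi + \int_\gS \brs{F_\mu}^2 \, dV_g$, and since $\int_\gS F_\mu = 2\pi c_1(M)$ is fixed, Cauchy--Schwarz forces $\int_\gS \brs{F_\mu}^2 \, dV_g \geq \brs{2\pi c_1(M)}_h^2 / A$. Because $c_1(M) \neq 0$ the right-hand side blows up as $A \to 0$, so $A$ cannot collapse; the same inequality shows $A$ is trapped in an interval bounded away from $0$ and $\infty$. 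With the area and conformal structure controlled, I would run the standard parabolic bootstrap in the scalar variables $u$ and $w$ --- maximum-principle bounds on $\brs{F_\mu}$ and on the curvature, followed by Shi-type derivative estimates --- to rule out curvature blow-up and conclude that $(g_t, \mu_t)$ exists on $[0,\infty)$.

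Finally, for convergence I would first identify the limit: a static solution of \eqref{f:RYM} on $S^2$ satisfies $K = \tfrac12 \brs{F_\mu}^2$ and $d^*_g F_\mu = 0$, which forces $g$ round and $F_\mu$ a constant multiple of its area form, with the area pinned by $c_1(M)$ and $h$; this determines $\gl_1$ and $\gl_2$. The genuine difficulty, exactly as in the positive-curvature case of Ricci flow on $S^2$ treated by Hamilton and Chow, is that the conformal automorphism group of $S^2$ is noncompact, so the flow may drift within it and one cannot expect $g_t$ itself to converge --- this is precisely the role of the diffeomorphisms $\phi_t$. To overcome it I would use a monotone entropy for the flow together with a Harnack-type (or isoperimetric) estimate to show that the geometry subconverges to the static solution, and then handle the conformal drift by recentering: choosing $\phi_t$ in the conformal group (lifted to bundle automorphisms) so that $\phi_t^* g_t$ and $\phi_t^* F_{\mu_t}$ converge, and upgrading subsequential to full convergence using the rigidity of the limiting static solution. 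Controlling this noncompact-symmetry drift is the main obstacle, and is where the bulk of the analytic work --- and the appeal to the prior surface results of \cite{StreetsRYMsurfaces, HamiltonRFSurfaces, ChowRFSurfaces} --- would be concentrated.
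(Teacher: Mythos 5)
Your proposal addresses only case (4) of the theorem; cases (1)--(3) (which in the paper occupy most of the work, via maximum-principle estimates for $\chi\leq 0$ and a $\kappa$-noncollapsing/blowup argument for the trivial bundle over $S^2$) are not touched. Within case (4) the scalar reduction and the volume lower bound are correct and match the paper: the inequality $\frac{d}{dt}\Vol \geq -8\pi + \brs{2\pi c_1}^2/\Vol$ is exactly how Proposition \ref{p:S2volume} exploits $c_1(M)\neq 0$. But two steps as written would fail. First, that same differential inequality gives no upper bound on the area; the paper obtains the upper bound from the monotonicity of the Liouville-type energy $\FF$ combined with the sharp Moser--Trudinger inequality, which is a separate argument you would need to supply. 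Second, and more seriously, your long-time existence step rests on ``maximum-principle bounds on $\brs{F_\mu}$ and on the curvature,'' but no such bounds are available when $\chi(\Sigma)>0$: the evolution of $e^{-u}$ (Lemma \ref{l:invtracelemma}) contains the term $+R_{\Sigma}e^{-2u}$ with $R_{\Sigma}=1$, which has the wrong sign, so the maximum principle does not close. Control of the area and the conformal class does not prevent the conformal factor from concentrating, precisely because of the noncompact conformal group of $S^2$.

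This is why the paper must perform the Struwe-type gauge fixing \emph{before} proving existence: one recenters by conformal diffeomorphisms so that $\int_{S^2} x\, dV_{\hat g}=0$, which is the hypothesis needed to apply Aubin's improved Moser--Trudinger inequality; combined with the volume lower bound and the upper bound on $\FF$ this yields $\nm{dv}{L^2}\leq C$, hence a uniform Sobolev constant (Lemma \ref{l:confSobest}), and only then does the higher-regularity result (Proposition \ref{p:regularity}) give long-time existence. You have correctly identified the conformal drift as the main obstacle, but you defer it entirely to the convergence stage and propose to handle it with an unspecified ``monotone entropy plus Harnack estimate''; the actual mechanism --- the center-of-mass normalization feeding into Aubin's sharp constant --- is the missing idea, and without it neither the existence nor the convergence argument can be completed. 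The convergence itself then follows not from a Harnack inequality but from extracting times where $\frac{d}{dt}\FF\to 0$, bounding the Calabi energy, and invoking concentration-compactness for surfaces with bounded integral curvature.
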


\begin{rmk} Note that the qualitative behavior of the metric $g_t$ in cases (1) and (2) agrees with the case of Ricci flow, regardless of the topology of the bundle.  The cases (3) and (4), where $\chi(\Sigma) > 0$, are more subtle.  We recall that the Ricci flow on $S^2$ with arbitrary initial data contracts in finite time to a round point \cite{ChowRFSurfaces, HamiltonRFSurfaces}.  In case the bundle is trivial, the RYM flow includes these Ricci flow lines as special cases, and case (3) shows that this behavior still holds in general, that is, the flow still converges to a round point, with $F_{\mu} \to 0$ in the limit.

Considering homogeneous examples, one sees a basic qualitative difference in the case when the bundle is nontrivial, that is, when $F_{\mu} \neq 0$.   Specifically, the $F_{\mu}^2$ term in the evolution of $g$ acts as a \emph{restoring force} which doesn't allow the sphere to collapse.  Rather, along RYM flow the base will converge to a round sphere of fixed size depending on the Chern class of the bundle and the choice of $h$, without need for normalization, with $F_{\mu}$ remaining fixed (note this indicates the necessity of the constants $\gl_i$).  Outside of the homogeneous setting one can hope that this behavior will still hold due to the topological nontriviality of the bundle.  Case (4) of Theorem \ref{t:RYMthm} confirms this behavior for all initial data.  
\end{rmk}

\begin{rmk} The proofs of cases (1) and (2) follow from maximum principle arguments combined with monotonicity of a modified Liouville energy.   On the other hand, the proofs of cases (3) and (4) are significantly more intricate.  For case (3) we rely on a modified Perelman-type entropy functional to prove a $\gk$-noncollapsing result for the metrics along the flow.  Constructing a blowup limit at a singular time thus yields an ancient solution, which by maximum principle arguments can be shown to satisfy $F_{\mu} \equiv 0$, so that it is in fact a solution to Ricci flow.  A result of Perelman \cite{Perelman1} yields that this is then isometric to the shrinking sphere solution, yielding the claimed behavior.  Case (4) requires studying a certain gauge-modified flow to allow for an application of Aubin's improved Moser-Trudinger inequality.  We furthermore use the topological hypothesis of nontriviality of the bundle to establish an a priori lower bound for the volume along the flow.
\end{rmk}

As discussed above, Ricci-Yang-Mills flow over Riemann surfaces is equivalently described by solutions to generalized Ricci flow, thus Theorem \ref{t:RYMthm} has an immediate corollary:

\begin{cor} \label{c:GRFcor} Let $T^k \to M \to \Sigma$ denote a principal $T^k$ bundle over a Riemann surface $\Sigma$, with $h$ a choice of metric on $\mathfrak{t}^k$.  Given $G_0 =  \pi^* g_0 + \tr_h \mu_0 \otimes \mu_0$ an invariant metric on $M$, let $H_0 = \tr_{h} F_{\mu_0} \wedge \mu_0$.  Let $(G_t, H_t)$ denote the unique solution to generalized Ricci flow on $M$ with initial data $(G_0, H_0)$.  Then $G_t =  \pi^* g_t + \tr_h \mu_t \otimes \mu_t, H_t = \tr_h F_{\mu_t} \wedge \mu_t$, where $(g_t, \mu_t)$ is the solution to RYM flow with initial condition $(g_0, \mu_0)$.  In particular, the existence and convergence properties are described as in Theorem \ref{t:RYMthm} according to the topology of $M$.
\end{cor}

\begin{rmk} \label{r:Tdualityrmk} As solutions to generalized Ricci flow with a torus symmetry, the flow lines in Corollary \ref{c:GRFcor} are subject to the action of $T$-duality (cf. \cite{GF19, GRFbook, StreetsTdual}).  As an elementary example, homogeneous solutions on the unit tangent bundle over $\Sigma$ are $T$-dual to solutions on the same bundle with $g$ and $\mu$ preserved, while the length of the circles is inverted.
\end{rmk}

Furthermore, due to the connection to pluriclosed flow described above, Theorem \ref{t:RYMthm} has applications to complex geometry.  Complex surfaces give the first examples of non-K\"ahler manifolds and among these, elliptic surfaces form a diverse and interesting class.  Conjectures of the behavior of pluriclosed flow on these surfaces were described in \cite{SG}, and the next result confirms these conjectures in the case of invariant initial data on principal bundles, announced in \cite{SG}.

\begin{cor} \label{t:PCFcorollary} Let $(M, J)$ be a compact complex surface which is the total space of a holomorphic $T^2$-principal bundle over a Riemann surface $\Sigma$.
\begin{enumerate}
\item Suppose $\chi(\Sigma) < 0$.  Given $\gw_0$ an invariant pluriclosed metric on $(M, J)$, the solution to pluriclosed flow with initial data $\gw_0$ exists on $[0,\infty)$, and $(M, \frac{\gw_t}{2t})$ converges in the Gromov-Hausdorff topology to $(\Sigma, g_{\Sigma})$, where $g_{\Sigma}$ denotes the canonical metric of constant curvature $-1$.
\item Suppose $\chi(\Sigma) = 0$.  Given $\gw_0$ an invariant pluriclosed metric on $(M, J)$, the solution to pluriclosed flow with initial data $\gw_0$ exists on $[0,\infty)$, and $(M, \frac{\gw_t}{2t})$ converges in the Gromov-Hausdorff topology to a point.
\item Suppose $(M, J) \cong S^2 \times T^2$.  Given $\gw_0$ an invariant pluriclosed metric on $(M, J)$, let
\begin{align*}
T = (8 \pi)^{-1} \Area_{\gw_0} (S^2).
\end{align*}
The solution to pluriclosed flow with this initial data exists on $[0,T)$, and $(M, \frac{1}{T - 2t}\gw_t)$ converges in the $C^{\infty}$ topology to $(S^2 \times \mathbb R^2, \gw_{S^2} \times \gw_{\mathbb R^2})$.
\item Suppose $(M, J)$ is a Hopf surface.  Given $\gw_0$ an invariant pluriclosed metric on $(M, J)$, the solution to pluriclosed flow with this initial data exists on $[0,\infty)$, and $(M, \gw_t)$ converges in the $C^{\infty}$ topology to a multiple of $\gw_{\mbox{\tiny{Hopf}}}$, the standard Hopf metric.
\end{enumerate}
\end{cor}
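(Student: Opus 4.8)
The plan is to derive Corollary \ref{t:PCFcorollary} from Theorem \ref{t:RYMthm} and Corollary \ref{c:GRFcor} by translating the complex-geometric setup into the RYM/generalized Ricci flow framework. The key structural input is the result from \cite{Streetssolitons} mentioned in the introduction: pluriclosed flow of $T^2$-invariant metrics on a complex surface reduces to RYM flow, and moreover this reduction automatically fixes the metric $h$ on the Lie algebra $\mathfrak{t}^2$. So the first step is to make this correspondence precise. Given a holomorphic $T^2$-principal bundle $T^2 \to M \to \Sigma$ and an invariant pluriclosed metric $\gw_0$, I would write $\gw_0$ in terms of base data $(g_0, \mu_0, h)$, invoke the equivalence so that the pluriclosed flow $\gw_t$ corresponds to the RYM flow solution $(g_t, \mu_t)$ with that initial data, and record how $\frac{\gw_t}{2t}$ and $\frac{1}{T-2t}\gw_t$ relate to the rescaled invariant metrics $\frac{G_t}{2t}$ and $\frac{1}{T-2t}G_t$ appearing in Theorem \ref{t:RYMthm}. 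The Gromov-Hausdorff and $C^\infty$ convergence statements then transfer directly since $G_t$ \emph{is} (up to the identification $\gw_t \leftrightarrow G_t$) the associated invariant metric.

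The second step is to match each of the four topological cases. Here the crucial dictionary is between the complex-geometric classification of $T^2$-bundles over $\Sigma$ and the data $(\chi(\Sigma), c_1(M))$ feeding Theorem \ref{t:RYMthm}. For cases (1) and (2) this is immediate: $\chi(\Sigma) < 0$ and $\chi(\Sigma) = 0$ transfer verbatim, with $g_\Sigma$ the constant-curvature metric, and the collapsing descriptions are identical. For case (3), I would identify $(M,J) \cong S^2 \times T^2$ with the case $\chi(\Sigma) > 0$ together with $c_1(M) = 0$ (the trivial bundle over $S^2$); one then reads off case (3) of Theorem \ref{t:RYMthm}, with $\Sigma \times \mathbb{R}$ (base times one collapsed circle direction) refined to $S^2 \times \mathbb{R}^2$ because the fiber is $T^2$. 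For case (4), the Hopf surface must be identified as a nontrivial holomorphic $T^2$-bundle over $\Sigma = S^2$, i.e.\ $\chi(\Sigma) > 0$ with $c_1(M) \neq 0$, so that case (4) of Theorem \ref{t:RYMthm} applies and the normalization by diffeomorphisms $\phi_t$ produces a fixed limiting metric.

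Two points require genuine care rather than bookkeeping. First, in case (3) I must verify the explicit blow-up time $T = (8\pi)^{-1}\Area_{\gw_0}(S^2)$. This is not literally the statement of Theorem \ref{t:RYMthm}(3), which only asserts existence of some finite $T$, so I would compute the singular time directly: the $F^2$ restoring force vanishes on the trivial bundle, the flow on the base is genuine Ricci flow on $S^2$, and the total area evolves by $\dt \Area = -\int_\Sigma 2 K \, dA = -4\pi\chi(S^2) = -8\pi$ by Gauss-Bonnet, so the area reaches zero precisely at $T = (8\pi)^{-1}\Area_{\gw_0}(S^2)$. Second, in case (4) the abstract limit $(\gl_1 g_\Sigma, \gl_2 \gw_\Sigma)$ of Theorem \ref{t:RYMthm}(4) must be repackaged as convergence of $\gw_t$ to a multiple of the standard Hopf metric $\gw_{\mathrm{Hopf}}$; I would show that the pair of limiting constants $(\gl_1, \gl_2)$, together with the fixed $h$ determined by the pluriclosed reduction, reconstructs exactly the invariant pluriclosed metric on the Hopf surface, which is the (essentially unique up to scale) homogeneous metric $\gw_{\mathrm{Hopf}}$.

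I expect the main obstacle to be this last identification in case (4): ensuring that the limit furnished by Theorem \ref{t:RYMthm}(4) in terms of $(g_\Sigma, \gw_\Sigma)$ and the constants $\gl_i$ genuinely assembles into the standard Hopf metric rather than merely some invariant pluriclosed metric, and confirming that the convergence is in $C^\infty$ without the rescaling and diffeomorphisms being visible as additional normalizations at the level of $\gw_t$. This requires carefully tracking how the diffeomorphisms $\phi_t$ of Theorem \ref{t:RYMthm}(4) act on the total space $M$ and checking that they preserve the complex structure $J$, so that the limit is a bona fide convergence of Hermitian metrics on $(M,J)$.
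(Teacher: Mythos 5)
Your overall strategy is the paper's: the proof is exactly ``Proposition \ref{p:flowreduction} plus Theorem \ref{t:RYMthm},'' with the dictionary between the holomorphic classification and the data $(\chi(\Sigma), c_1(M))$ handled as you describe. However, two of the points you flag as ``requiring genuine care'' are precisely where your proposal has gaps. In case (3), your computation of the singular time is based on the claim that ``the $F^2$ restoring force vanishes on the trivial bundle, the flow on the base is genuine Ricci flow on $S^2$.'' This is false: triviality of the bundle gives $c_1(M)=0$, i.e.\ $[F_{\mu_0}]=0$ in cohomology, but $F_{\mu_0}=dd^cf$ need not vanish pointwise, so the base flow is not Ricci flow and the base area satisfies $\frac{d}{dt}\Area(g_t) = -8\pi + \tfrac12\int_\Sigma \brs{F}^2_{g,h}\,dV_g$, not $-8\pi$. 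Moreover the quantity in the statement is $\Area_{\gw_0}(S^2)=\int_{S^2\times\{pt\}}\gw_0$, the area of the holomorphic curve in the pluriclosed metric on the total space, which carries a fiber contribution and in general differs from $\Area_{g_0}(S^2)$. The paper (Proposition \ref{p:trivialS2conv}) obtains the exact rate $\frac{d}{dt}\int_{S^2}\gw_t = -8\pi$ by applying Stokes' theorem to $\rho_B^{1,1}$ over the holomorphic curve $S^2\times\{pt\}$ (cf.\ \cite{PCF} Proposition 3.9); this is a cohomological identity that holds regardless of whether $F$ vanishes, and it is what pins down $T$.

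In case (4) you have correctly identified the main obstacle --- upgrading the limit $(\gl_1 g_\Sigma, \gl_2\gw_\Sigma)$ of Theorem \ref{t:RYMthm}(4) to the statement that $\gw_t$ converges to a multiple of $\gw_{\mbox{\tiny{Hopf}}}$ --- but your proposed resolution (directly reconstructing the Hopf metric from the constants $\gl_1,\gl_2$ and $h$, invoking uniqueness of the homogeneous invariant metric) is not carried out and is missing its key ingredient. The paper's argument is different: one checks that the limiting structure has vanishing Bismut--Ricci tensor, and then invokes the rigidity theorem of Gauduchon--Ivanov \cite{GauduchonIvanov} (cf.\ also the soliton classification of \cite{Streetssolitons}), which says that a compact Bismut--Ricci-flat complex surface is either Calabi--Yau or isometric to the standard Hopf metric; since Hopf surfaces admit no K\"ahler metric, the limit must be $\gw_{\mbox{\tiny{Hopf}}}$ up to scale. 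Without this classification input (or an equivalent uniqueness statement for invariant fixed points, which you assert but do not prove), your plan for case (4) does not close. The remaining concern you raise --- that the diffeomorphisms $\phi_t$ should be compatible with $J$ --- is legitimate but benign, since the gauge-fixing diffeomorphisms of Proposition \ref{p:Hopfconvergence2} are conformal transformations of $S^2$ lifted to the bundle, hence biholomorphisms.
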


\begin{rmk} \label{r:corrmk}
\begin{enumerate}
\item This result confirms the natural maximal existence time conjecture for pluriclosed flow in this setting (\cite{PCFReg} Conjecture 5.2)
\item Up to finite covers, all non-K\"ahler surfaces of Kodaira dimension $0$ or $1$ are total spaces of holomorphic $T^2$ bundles, and admit invariant pluriclosed metrics and flow lines as described in items (1) and (2).
\item The surfaces in case (1) have Kodaira dimension $1$.  The general behavior of the K\"ahler-Ricci flow on K\"ahler surfaces of Kodaira dimension $1$, analyzing the more subtle case of singular fibrations, is described in \cite{SongTian}.
\item The surfaces in case (2) have Kodaira dimension $0$, and include both K\"ahler and non-K\"ahler Kodaira surfaces.
\item The surfaces in cases (3) and (4) have Kodaira dimension $-\infty$.  In fact, the only Hopf surfaces which are principal holomorphic $T^2$ bundles are \emph{standard}, described as $\mathbb Z$-quotients
\begin{align*}
S^3 \times S^1 \cong \mathbb C^2 \backslash \{\mathbf{0}\} / \IP{ (z_1, z_2) \to (\ga z_1, \gb z_2)},
\end{align*}
where $\brs{\ga} = \brs{\gb} < 1$.  These surfaces admit the Hermitian, pluriclosed, metric defined by the $\mathbb Z$-invariant K\"ahler form
\begin{align*}
\gw_{\mbox{\tiny{Hopf}}} = \rho^{-2} \i \del \delb \rho^2,
\end{align*}
where $\rho = \sqrt{ \brs{z_1}^2 + \brs{z_2}^2}$ is distance to the origin.  The associated Riemannian metric is the standard cylindrical metric on $\mathbb C^2 \backslash \{\mathbf{0}\} \cong S^3 \times \mathbb R$.  We denote the metric on the quotient as $\gw_{\mbox{\tiny{Hopf}}}$, which is Bismut-flat.
\end{enumerate}
\end{rmk}

Here is an outline of the rest of this paper.  In \S \ref{background} we derive fundamental properties of RYM flow on surfaces, including a reduction of the metric component to a flow of conformal factors, a scalar reduction of the Yang-Mills flow component, a monotonicity formula for a modified Liouville energy, and the derivation of some evolution equations.  We also recall a higher regularity estimate for the flow assuming certain uniform bounds on the base metric $g$.  We also describe the symmetry reductions discussed above.  Next in \S \ref{s:estimates} we record certain a priori estimates for the metric and Yang-Mills potential along the flow, adapted to each topological setting.  In \S \ref{s:convergence} we give the proofs of the main theorems, case by case.  The cases when $\chi \leq 0$ are handled using the a priori estimates of \S \ref{s:estimates}.  The case of $\chi(\Sigma) > 0$ and trivial bundle is handled by blowup argument, relying on a modification of Perelman's entropy functional to this setting.  Finally we address the case of $\chi(\Sigma) > 0$ and nontrivial bundle, relying on an a priori lower volume estimate and an estimate for the associated Sobolev constant.

\vskip 0.1in 

\noindent \textbf{Acknowledgements:} We thank Matthew Gursky for several helpful conversations, and Xilun Li for a careful reading of the paper.

\section{Background} \label{background}
\subsection{Setup and scalar reductions} \label{s:setup}

Fix $T^k \to M \to \Sigma$ a principal $T^k$ bundle over a compact Riemann surface $\Sigma$.  An invariant metric $G$ on $M$ is determined by a triple $(g, \mu, h)$, where $g$ is a metric on $\Sigma$, $\mu$ is a principal $T^k$-connection, and $h$ is a family of metrics on $\mathfrak t^k$ parameterized by $\Sigma$.  Specifically, $G = \pi^* g + \tr_h \mu \otimes \mu$.  We will make the further restriction that this family $h$ is constant over $M$, thus determined by a choice of inner product on $\mathfrak t^k$.  We will study a certain normalization of the Ricci-Yang-Mills flow system, namely we fix $\gl \in \{-1,0,1\}$ and consider
\begin{gather} \label{f:nRYM}
\begin{split}
\dt g =&\ -2 \Rc_g + F_{\mu}^2 - \gl g,\\
\dt \mu =&\ - d^*_{g} F_{\mu},\\
\dt h =&\ - \gl h.
\end{split}
\end{gather}
where $(F^2_{\mu})_{ij} = h_{IJ} g^{kl} F^I_{ik} F^J_{jl}$.  We note that our normalized flow also scales the metric $h$.  This is natural from the point of view of RYM flow as a symmetry reduction of flows on the total space of the bundle, and in particular it follows that the data described by (\ref{f:nRYM}) differs from the system by a rescaling of the associated metrics $G_t = \pi^* g_t + \tr_{h_t} \mu_t \otimes \mu_t$.  To begin our analysis we first show that RYM flow over a Riemann surface can be described using a conformal factor on the base space and a $\mathfrak t^k$-valued function determining the principal connection.

\begin{lemma} \label{l:uevolution} Given a solution to Ricci-Yang-Mills flow, one has $g_t = e^{u_t} g_{\Sigma}$, where
\begin{align} \label{f:ured}
\dt u =&\ e^{-u} \left( \gD_{g_{\Sigma}} u - R_{\Sigma} \right) + \tfrac{1}{2} \brs{F_{\mu}}^2_{g,h} - \gl.
\end{align}
\begin{proof} As $\Sigma$ is a surface, it follows easily that
\begin{align*}
\Rc_g = \tfrac{1}{2} R g, \qquad F_{\mu}^2 = \tfrac{1}{2} \brs{F_\mu}^2_{g,h} g.
\end{align*}
It follows easily that the ansatz $g_t = e^{u_t}$ is preserved, and furthermore
\begin{align*}
\left(\dt u \right) g_t = \dt g_t = - \left( R - \tfrac{1}{2} \brs{F_{\mu}}^2_{g,h} + \gl \right) g_t.
\end{align*}
Using the formula $R = e^{-u} \left( R_{\Sigma} - \gD_{g_{\Sigma}} u \right)$, the result follows.
\end{proof}
\end{lemma}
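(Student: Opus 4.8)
The plan is to verify that the conformal ansatz $g_t = e^{u_t} g_\Sigma$ is preserved under the normalized flow \eqref{f:nRYM} and to extract the scalar evolution \eqref{f:ured}. First I would record the two-dimensional identities: on any surface the Ricci tensor is a multiple of the metric, $\Rc_g = \tfrac{1}{2} R\, g$, since $\Rc$ is determined by the single scalar $R$; and since $F_\mu$ is a $2$-form on the surface (valued in $\mathfrak t^k$), the contraction $F_\mu^2 = \tr_h \tr_{g^{-1}} F \otimes F$ must also be a multiple of $g$, giving $F_\mu^2 = \tfrac{1}{2}\brs{F_\mu}^2_{g,h}\, g$. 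The factor $\tfrac{1}{2}$ in each case comes from taking the trace: $\tr_g \Rc = R$ and $\tr_g F_\mu^2 = \brs{F_\mu}^2_{g,h}$, so that in each case the coefficient of $g$ is half the relevant scalar. Consequently the right-hand side of the metric equation in \eqref{f:nRYM} is pointwise a scalar multiple of $g$, namely $-(R - \tfrac{1}{2}\brs{F_\mu}^2_{g,h} + \gl)\, g$.

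Next I would observe that since $\dt g$ is everywhere proportional to $g$, the conformal class is preserved: writing $g_t = e^{u_t} g_\Sigma$ for a time-dependent function $u_t$ is consistent, because differentiating this ansatz gives $\dt g_t = (\dt u_t)\, g_t$, which is again proportional to $g_t$. Matching this against the computed right-hand side yields the pointwise scalar identity
\begin{align*}
\left(\dt u \right) g_t = - \left( R - \tfrac{1}{2} \brs{F_{\mu}}^2_{g,h} + \gl \right) g_t,
\end{align*}
and cancelling the common factor $g_t$ produces $\dt u = -R + \tfrac{1}{2}\brs{F_\mu}^2_{g,h} - \gl$.

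Finally I would substitute the standard conformal transformation law for the scalar curvature in dimension two. Under $g = e^u g_\Sigma$ on a surface one has $R = e^{-u}\left( R_\Sigma - \gD_{g_\Sigma} u \right)$, where $\gD_{g_\Sigma}$ is the Laplacian of the background metric $g_\Sigma$ and $R_\Sigma$ its scalar curvature; inserting this into $\dt u = -R + \tfrac{1}{2}\brs{F_\mu}^2_{g,h} - \gl$ gives exactly \eqref{f:ured}. The only genuinely content-bearing inputs are these two classical surface identities (the conformal change of $R$ and the proportionality $\Rc = \tfrac12 R g$), together with the observation that $F_\mu^2$ is likewise a multiple of $g$; everything else is algebraic cancellation. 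There is no serious obstacle here — the main point requiring care is bookkeeping of the factors of $\tfrac{1}{2}$ and the sign conventions in the conformal Laplacian formula, to confirm they assemble into precisely the stated coefficients rather than introducing an unwanted factor.
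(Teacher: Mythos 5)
Your proposal is correct and follows essentially the same route as the paper: the two-dimensional proportionality identities $\Rc_g = \tfrac{1}{2}Rg$ and $F_\mu^2 = \tfrac{1}{2}\brs{F_\mu}^2_{g,h}g$, preservation of the conformal ansatz because the right-hand side is pointwise a multiple of $g$, and the conformal transformation law $R = e^{-u}(R_\Sigma - \gD_{g_\Sigma}u)$. The only difference is that you spell out the trace bookkeeping behind the factors of $\tfrac{1}{2}$, which the paper leaves implicit.
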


\begin{lemma} \label{l:freduction} Given a solution to Ricci-Yang-Mills flow , and a background connection $\bar{\mu}$ there exists a one-parameter family of $\mathfrak t^k$-valued functions $f_t$ such that
\begin{align} \label{f:fred}
\mu_t = \bar{\mu} + d^c f, \qquad \dt f =&\ \gD_g f + \tr_{\gw} F_{\bar{\mu}}.
\end{align}
\begin{proof} Since $\mu_0$ and $\bar{\mu}$ are connections on the same bundle over a Riemann surface, using Hodge theory and a gauge transformation we can solve for a function $f_0$ such that $\mu_0 = \bar{\mu} + d^c f_0$.  Given the solution to Ricci-Yang-Mills flow, we can solve for $f_t$ as in the statement by the theory of linear parabolic equations, using initial data $f_0$.  We then define $\til{\mu}_t = \bar{\mu} + d^c f_t$ and observe that
\begin{align*}
\dt \til{\mu} =&\ d^c \left( \gD_g f + \tr_{\gw} F_{\bar{\mu}} \right)\\
=&\ d^c \tr_{\gw} \left( F_{\bar{\mu}} + d d^c f\right)\\
=&\ d^c \tr_{\gw} F_{\til{\mu}}\\
=&\ -d^*_g F_{\til{\mu}}.
\end{align*}
Thus $\til{\mu}$ satisfies the Yang-Mills flow with respect to the time-dependent metric $g$, and since $\til{\mu}_0 = \mu_0$ and solutions to Yang-Mills flow are unique, it follows that $\mu_t = \til{\mu}_t = \bar{\mu} + d^c f$, as required.
\end{proof}
\end{lemma}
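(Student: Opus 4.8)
The plan is to convert the $\mathfrak t^k$-valued Yang--Mills flow equation $\dt \mu = - d^*_g F_\mu$ into a scalar heat-type equation. The key structural feature is that $T^k$ is abelian, so the space of connections is affine over $\mathfrak t^k$-valued one-forms and the curvature is an affine function of the connection: for any function $f$ one has $F_{\bar{\mu} + d^c f} = F_{\bar{\mu}} + d d^c f$. Consequently, if the flowing connection can be written in the form $\bar{\mu} + d^c f_t$, its entire evolution is governed by the single function $f_t$, and I expect the Yang--Mills operator to collapse to the Laplacian. I would organize the proof in four steps: (i) produce $f_0$ with $\mu_0 = \bar{\mu} + d^c f_0$; (ii) solve the resulting scalar parabolic equation for $f_t$ with this initial value; (iii) verify that $\til{\mu}_t := \bar{\mu} + d^c f_t$ solves Yang--Mills flow with respect to the prescribed metric $g_t$; and (iv) apply uniqueness of Yang--Mills flow to conclude $\mu_t = \til{\mu}_t$.

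For step (i), the difference $a := \mu_0 - \bar{\mu}$ is a global one-form whose differential $da = F_{\mu_0} - F_{\bar{\mu}}$ is exact, since both connections live on the same bundle and hence have cohomologous curvatures. The Hodge decomposition on $\Sigma$ then yields $a = d\alpha + d^c f_0 + \eta$ with $\eta$ harmonic, the co-exact part being $d^c f_0$ because on a surface $d^*(b\,\gw) = - d^c b$. The exact part $d\alpha$ is removed by a gauge transformation of $\mu_0$, which is harmless since Yang--Mills flow is gauge-equivariant. For step (ii), with $g_t$ supplied by the given RYM solution, the equation $\dt f = \gD_g f + \tr_\gw F_{\bar{\mu}}$ is linear parabolic with time-dependent coefficients, so standard linear theory provides a smooth solution $f_t$ taking the value $f_0$ at $t = 0$.

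Step (iii) is the computational heart. Differentiating gives $\dt \til{\mu} = d^c \dt f = d^c\big(\gD_g f + \tr_\gw F_{\bar{\mu}}\big)$. On a Riemann surface $d d^c f = (\gD_g f)\,\gw$, hence $\tr_\gw d d^c f = \gD_g f$ and $\gD_g f + \tr_\gw F_{\bar{\mu}} = \tr_\gw(F_{\bar{\mu}} + d d^c f) = \tr_\gw F_{\til{\mu}}$. It then remains to establish the pointwise identity $d^c \tr_\gw F = - d^*_g F$ for a two-form $F$ on a surface; this is equivalent to $*\, d\phi = d^c \phi$ for functions $\phi$, which follows from $d^*_g = - *\, d\, *$ on two-forms together with $* F = \tr_\gw F$. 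Combining the three displays gives $\dt \til{\mu} = - d^*_g F_{\til{\mu}}$, so $\til{\mu}$ solves Yang--Mills flow; since $\til{\mu}_0 = \mu_0$ and solutions are unique, $\mu_t = \til{\mu}_t = \bar{\mu} + d^c f_t$.

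The step I expect to be most delicate is (i): a nonzero harmonic component $\eta$ is a genuine obstruction, since $d^c f$ can never carry harmonic part and gauge transformations only shift the connection by closed forms with integral periods. Moreover, because $d^*d$ annihilates harmonic one-forms, the Yang--Mills flow \emph{preserves} the harmonic part of $\mu_t - \bar{\mu}$, so the reduction to the slice $\bar{\mu} + d^c(\,\cdot\,)$ can hold for all $t$ only if the harmonic part of $\mu_0 - \bar{\mu}$ vanishes. I would therefore arrange this at the outset by choosing $\bar{\mu}$ with the same holonomy as $\mu_0$ (equivalently, gauge-fixing so that $\eta = 0$); once the harmonic parts agree, the reduction and the parabolic solvability are routine, and the remaining subtlety is merely the careful bookkeeping of sign conventions in the identity $d^c \tr_\gw F = - d^*_g F$.
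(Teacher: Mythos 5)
Your proof follows the same route as the paper's: reduce to the affine slice $\bar{\mu} + d^c f$ via Hodge theory, solve the scalar linear parabolic equation for $f_t$, verify that $\til{\mu}_t = \bar{\mu} + d^c f_t$ satisfies Yang--Mills flow using $d^c \tr_{\gw} F = -d^*_g F$ on a surface, and conclude by uniqueness of Yang--Mills flow. Your extra care with the harmonic component of $\mu_0 - \bar{\mu}$ is a genuine refinement of the paper's terser ``Hodge theory and a gauge transformation'': since abelian gauge transformations only shift the connection by closed forms with integral periods, a nonzero harmonic part cannot in general be gauged away and must instead be absorbed into the choice of $\bar{\mu}$ (which only has its curvature, not its flat part, pinned down), and identifying this as the delicate point is exactly right.
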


Thus we have shown that the metric $G_t$ is defined equivalently in terms of a conformal factor $u$ on $\Sigma$ and a $\mathfrak t^k$-valued function $f$, and this notation will be used throughout.  Furthermore we will refer to the principal connection as $\mu_f := \bar{\mu} + d^c f$, where $\bar{\mu}$ is some background connection.

\subsection{Energy functional}

We next exhibit a gradient formulation for Ricci-Yang-Mills flow in this setting.  From \cite{StruweFlows} we know that the Ricci flow on surfaces is the gradient flow of the Liouville energy.  A generalization of this to Ricci-Yang-Mills flow was shown in \cite{StreetsRYMsurfaces}, and below we give a minor modification of this to account for the scaling parameter $\gl$.

\begin{defn} Given data $(u,f,h)$ as above, let
\begin{align*}
\FF(u,f,h) =&\ \int_{\Sigma} \left( \tfrac{1}{2} \brs{d u}^2 + e^{-u} \brs{F_{\mu_f}}^2_{g_{\Sigma},h} \right) dV_{\Sigma} + R_{\Sigma} \int_{\Sigma} u dV_{\Sigma} + \gl \int_{\Sigma} e^u dV_{\Sigma}.
\end{align*}
\end{defn}

\begin{prop} \label{p:Liouvillemon} Given a solution to Ricci-Yang-Mills flow, one has
\begin{align*}
\frac{d}{dt} \FF(u_t,f_t,h_t) =&\ - \int_{\Sigma} e^u \dot{u}^2 dV_{\Sigma} - \gl \int_{\Sigma} e^{-u} \brs{F_{\mu_f}}^2_{g_{\Sigma},h} dV_{\Sigma} - 2 \int_{\Sigma} e^{-2u} \IP{\N^g F_{\mu_f}, \N^g F_{\mu_f}}_{g_{\Sigma},h}dV_{\Sigma}.
\end{align*}
\begin{proof} First we compute
\begin{align*}
\frac{d}{dt} \int_{\Sigma} \tfrac{1}{2} \brs{d u}^2 dV_{\Sigma} =&\ \int_{\Sigma} \IP{ d \dot{u} , du} dV_{\Sigma} = - \int_{\Sigma} \dot{u} \gD_{g_{\Sigma}} u dV_{\Sigma}
\end{align*}
Next
\begin{align*}
\frac{d}{dt} \int_{\Sigma} e^{- u} \brs{F_{\mu_f}}^2_{g_{\Sigma},h} dV_{\Sigma} =&\ \int_{\Sigma} \left[ \left( - \dot{u} - \gl \right) e^{-u} \brs{F_{\mu_f}}^2_{g_{\Sigma},h} + 2 e^{-u} \IP{\gD_g F_{\mu_f}, F_{\mu_f}}_{g_{\Sigma},h} \right] dV_{\Sigma}\\
=&\ \int_{\Sigma} \left[ \left( \dot{u} + \gl \right) \left( - e^{-u} \brs{F_{\mu_f}}^2_{g_{\Sigma},h} \right) - 2 e^{-2u} \IP{\N^g F_{\mu_f}, \N^g F_{\mu_f}}_{g_{\Sigma},h} \right] dV_{\Sigma}.
\end{align*}
Lastly
\begin{align*}
\frac{d}{dt} \left( R_{\Sigma} \int_{\Sigma} u dV_{\Sigma} + \gl \int_{\Sigma} e^u dV_{\Sigma} \right) = \int_{\Sigma} \left( R_{\Sigma} \dot{u}  + \gl \dot{u} e^u \right)dV_{\Sigma}.
\end{align*}
Combining these yields
\begin{align*}
\frac{d}{dt} \FF(u_t,f_t) =&\ \int_{\Sigma} \dot{u} \left( - \gD_{g_{\Sigma}} u + R_{\Sigma} - e^{-u} \brs{F_{\mu_f}}^2_{g_{\Sigma},h} + \gl e^u \right) dV_{\Sigma}\\
&\ - \gl \int_{\Sigma} e^{-u} \brs{F_{\mu_f}}^2_{g_{\Sigma},h} dV_{\Sigma} - 2 \int_{\Sigma} e^{-2u} \IP{\N^g F_{\mu_f}, \N^g F_{\mu_f}}_{g_{\Sigma},h}dV_{\Sigma}\\
=&\ - \int_{\Sigma} e^u \dot{u}^2 dV_{\Sigma} - \gl \int_{\Sigma} e^{-u} \brs{F_{\mu_f}}^2_{g_{\Sigma},h} dV_{\Sigma} - 2 \int_{\Sigma} e^{-2u} \IP{\N^g F_{\mu_f}, \N^g F_{\mu_f}}_{g_{\Sigma},h}dV_{\Sigma},
\end{align*}
as claimed.
\end{proof}
\end{prop}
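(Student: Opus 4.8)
The plan is to prove the identity by direct differentiation, splitting $\FF$ into its Dirichlet term $\int_{\Sigma} \tfrac12 \brs{du}^2 dV_{\Sigma}$, its Yang--Mills term $\int_{\Sigma} e^{-u}\brs{F_{\mu_f}}^2_{g_{\Sigma},h}dV_{\Sigma}$, and its potential terms $R_{\Sigma}\int_{\Sigma} u\,dV_{\Sigma} + \gl \int_{\Sigma} e^u dV_{\Sigma}$. Throughout I would treat the background metric $g_{\Sigma}$ and its volume form $dV_{\Sigma}$ as fixed, so that $\frac{d}{dt}$ acts only on the explicit dependence on $u$, $f$, and $h$. The three evolution equations I will feed in are the conformal-factor equation $\dot u = e^{-u}(\gD_{g_{\Sigma}} u - R_{\Sigma}) + \tfrac12\brs{F_{\mu_f}}^2_{g,h} - \gl$ from Lemma \ref{l:uevolution}, the Yang--Mills evolution $\dt \mu_f = - d^*_g F_{\mu_f}$ (hence $\dt F_{\mu_f} = - d\,d^*_g F_{\mu_f}$) coming from Lemma \ref{l:freduction}, and the scaling $\dt h = -\gl h$ from \eqref{f:nRYM}.

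First I would dispatch the routine terms. Integrating by parts against the fixed metric $g_{\Sigma}$, the Dirichlet term contributes $\frac{d}{dt}\int_{\Sigma}\tfrac12\brs{du}^2 dV_{\Sigma} = -\int_{\Sigma}\dot u\,\gD_{g_{\Sigma}}u\,dV_{\Sigma}$, while the potential terms contribute $\int_{\Sigma}(R_{\Sigma} + \gl e^u)\dot u\,dV_{\Sigma}$. The substantive computation is the Yang--Mills term, whose derivative splits into three pieces: the conformal weight $e^{-u}$ gives $-\dot u\,e^{-u}\brs{F_{\mu_f}}^2$; the factor $\dot h = -\gl h$, entering linearly through $h_{IJ}$, gives $-\gl\,e^{-u}\brs{F_{\mu_f}}^2$; and the curvature variation gives $2e^{-u}\IP{\dt F_{\mu_f}, F_{\mu_f}}_{g_{\Sigma},h}$. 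Since $\Sigma$ is a surface, $F_{\mu_f}$ is top-degree and $dF_{\mu_f}=0$, so $\dt F_{\mu_f} = -d\,d^*_g F_{\mu_f}$ is exactly the metric Hodge Laplacian $\gD_g F_{\mu_f}$ up to the sign convention.

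The main technical point is the conformal bookkeeping in the resulting term $2\int_{\Sigma}e^{-u}\IP{\gD_g F_{\mu_f}, F_{\mu_f}}_{g_{\Sigma},h}dV_{\Sigma}$, in which $\gD_g$ and $\N^g$ are taken with respect to $g = e^u g_{\Sigma}$ while the pointwise norm and volume use $g_{\Sigma}$. Converting to $g$ via $\IP{\cdot,\cdot}_{g_{\Sigma}} = e^{2u}\IP{\cdot,\cdot}_g$ on $2$-forms and $dV_{\Sigma} = e^{-u}dV_g$, the weights recombine so that this equals $2\int_{\Sigma}\IP{\gD_g F_{\mu_f}, F_{\mu_f}}_{g,h}dV_g$; integrating by parts with respect to $g$ (again using $dF_{\mu_f}=0$) yields $-2\int_{\Sigma}\brs{\N^g F_{\mu_f}}^2_{g,h}dV_g$, and converting back produces $-2\int_{\Sigma}e^{-2u}\IP{\N^g F_{\mu_f},\N^g F_{\mu_f}}_{g_{\Sigma},h}dV_{\Sigma}$, matching the final term of the claim. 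I would keep careful track of the conformal weights here (a $2$-form norm carries $e^{-2u}$, each covariant derivative adds a further index, etc.).

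Finally I would assemble all contributions and collect the terms linear in $\dot u$, which sum to $\int_{\Sigma}\dot u\big(-\gD_{g_{\Sigma}}u + R_{\Sigma} + \gl e^u - e^{-u}\brs{F_{\mu_f}}^2_{g_{\Sigma},h}\big)dV_{\Sigma}$. The crux is that Lemma \ref{l:uevolution} identifies the parenthetical expression as precisely $-e^u\dot u$, collapsing this into the perfect square $-\int_{\Sigma}e^u\dot u^2\,dV_{\Sigma}$; the two surviving non-$\dot u$ contributions are exactly $-\gl\int_{\Sigma}e^{-u}\brs{F_{\mu_f}}^2$ and $-2\int_{\Sigma}e^{-2u}\brs{\N^g F_{\mu_f}}^2$, completing the identity. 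I expect the difficulty to be bookkeeping rather than conceptual: the delicate points are (a) the conformal weight accounting in the Yang--Mills variation above, and (b) applying the normalization convention for $\brs{F}^2$ consistently between $\dot u$ and the energy, so that the coefficient of $e^{-u}\brs{F_{\mu_f}}^2$ in the bracket matches the one produced by $\tfrac12\brs{F_{\mu_f}}^2_{g,h}$ in $\dot u$ and the square closes exactly.
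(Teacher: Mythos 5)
Your proposal is correct and follows essentially the same route as the paper: the same three-way split of $\FF$, the same use of the evolution equations for $u$, $F_{\mu_f}$, and $h$ in differentiating each piece, the same integration by parts producing the $-2\int e^{-2u}\brs{\N^g F_{\mu_f}}^2$ term, and the same recognition via Lemma \ref{l:uevolution} that the $\dot u$-linear bracket collapses to the perfect square $-\int e^u \dot u^2$. The conformal-weight bookkeeping you spell out is carried out implicitly in the paper's computation and checks out.
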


\subsection{Higher Regularity} \label{s:higherreg}

One key application of the energy monotonicity of Proposition \ref{p:Liouvillemon} is to obtain higher regularity estimates for the flow in the presence of certain bounds.

\begin{prop} \label{p:regularity} Given a solution $(g_t, \mu_{f_t}, h_t)$ to (\ref{f:nRYM}), suppose there exists a constant $C > 0$ so that for all $t > 0$,
\begin{align} \label{f:regassump}
C_S(g_t) \leq C, \qquad C^{-1} \leq \Vol(g_t) \leq C, \qquad \brs{\brs{d u}}_{L^2} \leq C, \qquad \FF(u_t,f_t,h_t) \leq C.
\end{align}
There exists $\ge, A$ depending on $C$ so that if $[t_0,t_1]$ is a time interval such that
\begin{align*}
\brs{t_1 - t_0} + \FF(u_{t_0},f_{t_0},h_{t_0}) - \FF(u_{t_1},f_{t_1},h_{t_1}) \leq \ge,
\end{align*}
then
\begin{align*}
\sup_{[t_0,t_1]} \left( \brs{\brs{u}}_{H^2}^2 + \brs{\brs{ e^u \brs{\N^g F_{\mu_f}}_g}}_{L^2}^2 \right) \leq A \left( \brs{\brs{u(t_0)}}_{H^2} + \brs{\brs{ e^u \brs{\N^g F_{\mu_f}}_g}(t_0)}_{L^2}^2 + 1\right).
\end{align*}
Moreover, if the assumptions (\ref{f:regassump}) hold on a finite time interval $[0,T)$, then the flow extends smoothly past time $T$.
\begin{proof} This is established in (\cite{StreetsRYMsurfaces} \S 4), and is easily modified to account for the normalizations we have chosen here.
\end{proof}
\end{prop}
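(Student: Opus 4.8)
The plan is to follow Struwe's energy method for Ricci flow on surfaces, adapted to the RYM system as in \cite{StreetsRYMsurfaces}, treating the $\gl$-dependent terms as lower-order perturbations. The only a priori dissipative resource is the monotonicity formula of Proposition \ref{p:Liouvillemon}. Integrating it over $[t_0,t_1]$, and noting that the hypotheses (\ref{f:regassump}) bound $\int_{\Sigma} e^{-u}\brs{F_{\mu_f}}^2_{g_\Sigma,h}\, dV_\Sigma$ pointwise in time, the smallness of both $|t_1-t_0|$ and the energy drop $\FF(u_{t_0},f_{t_0},h_{t_0}) - \FF(u_{t_1},f_{t_1},h_{t_1})$ forces the space-time integrals $\int\!\!\int e^u \dot u^2$ and $\int\!\!\int e^{-2u}\brs{\N^g F}^2_{g_\Sigma,h}$ to be at most a fixed multiple of $\ge$. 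Upgrading these two small quantities to the pointwise-in-time $H^2 \times L^2$ bound of the conclusion is the whole content of the argument.

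First I would rewrite the conformal equation (\ref{f:ured}) as the elliptic identity $\gD_{g_\Sigma} u = e^u \dot u + R_\Sigma - \tfrac12 e^{-u}\brs{F_{\mu_f}}^2_{g_\Sigma,h} + \gl e^u$. Together with the $L^2$ bound on $du$ and the two-sided volume bound, elliptic regularity reduces control of $\nm{u}{H^2}$ to control of $\nm{e^u \dot u}{L^2}$ and of the quartic term $\nm{e^{-u}\brs{F_{\mu_f}}^2_{g_\Sigma}}{L^2}$; the latter is estimated by the Sobolev inequality with the controlled constant $C_S(g_t)$, interpolating an $L^4$-type norm of $F$ against its weighted gradient $\nm{e^u \brs{\N^g F}_g}{L^2}$ and the lower-order energy.

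The central step is to control, pointwise in time, the second-order energy $\EE := \nm{u}{H^2}^2 + \nm{e^u \brs{\N^g F}_g}{L^2}^2$, which by the elliptic identity above amounts to controlling $\int_\Sigma e^{2u}\dot u^2\, dV_\Sigma$ and $\int_\Sigma e^{2u}\brs{\N^g F}^2_g\, dV_\Sigma$. For this I would differentiate the flow in time, obtaining heat-type equations for $\dot u$ and for $\N^g F$ whose reaction terms are cubic and quartic in $(u,F)$ and their first derivatives. Each such term is estimated using the Sobolev inequality together with the uniform smallness of the dissipated space-time integrals above, so that the top-order pieces are absorbed into the good (negative) Laplacian terms and the remainder is bounded by $\EE$ itself plus a constant. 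This produces a differential inequality $\frac{d}{dt}\EE \leq c\,\EE + c$ on $[t_0,t_1]$, whence a continuity/Gronwall argument yields the asserted bound with $A$ depending only on $C$.

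The main obstacle is precisely a weight mismatch: the monotonicity dissipates $\int e^u \dot u^2$ and $\int e^{-2u}\brs{\N^g F}^2_{g_\Sigma,h}$, whereas closing the $H^2$ estimate requires $\int e^{2u}\dot u^2$ and $\int e^{2u}\brs{\N^g F}^2_g$, heavier by a factor of $e^u$ and needed pointwise rather than integrated in time. Bridging this gap is exactly where the Sobolev-constant and volume hypotheses do all the work, since they prevent the conformal factor and the curvature from concentrating and allow every nonlinear term in the evolution of $\EE$ to be controlled uniformly with its top-order part absorbed. Finally, for the extension statement, on a finite interval $[0,T)$ the hypotheses make $\FF$ bounded and the $\gl$-term pointwise bounded, so the total dissipation is finite; hence $[0,T)$ partitions into finitely many subintervals on each of which $|t_1-t_0|$ plus the energy drop is at most $\ge$. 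Applying the main estimate on each produces a uniform $H^2 \times L^2$ bound up to $T$, and standard parabolic bootstrapping then extends the flow smoothly past $T$.
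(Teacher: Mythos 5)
Your outline is correct and is essentially the argument the paper relies on: its proof is simply a citation to the Struwe-type energy method of \cite{StreetsRYMsurfaces} \S 4, which proceeds exactly as you describe --- smallness of the dissipated space-time integrals from the monotonicity formula of Proposition \ref{p:Liouvillemon}, evolution inequalities for the second-order energy $\EE = \nm{u}{H^2}^2 + \nm{e^u\brs{\N^g F}_g}{L^2}^2$ with the nonlinearities absorbed via the Sobolev-constant and volume hypotheses, and a continuity/Gronwall closure. One point of precision: the final differential inequality cannot be literally $\frac{d}{dt}\EE \leq c\,\EE + c$ with $c$ depending only on $C$ (that alone would render the smallness hypothesis on $\ge$ superfluous); rather, the superlinear terms in $\EE$ carry coefficients controlled by the dissipated quantities, and it is precisely there that $\ge$ does its work, as your continuity-argument remark implicitly acknowledges.
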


\subsection{Symmetry Reductions} \label{ss:reductions}

In this subsection we record two ways in which the Ricci-Yang-Mills flow on Riemann surfaces arises via considering natural flow equations in higher dimensions with symmetries.  First, certain generalized Ricci flow lines on a three-manifold with a principal $S^1$ symmetry reduce to Ricci-Yang-Mills flow on the base space.  Also, invariant solutions to pluriclosed flow on complex surfaces which are principal $T^2$ bundles reduce to Ricci-Yang-Mills flow on the base space.  These reductions show how to obtain Corollaries \ref{c:GRFcor} and \ref{t:PCFcorollary} from Theorem \ref{t:RYMthm}, but are also instrumental in showing the behavior of the Ricci-Yang-Mills flow in the most difficult case of the sphere.

\subsubsection{$T^k$-invariant generalized Ricci flow over Riemann surfaces}

As described in the introduction, the \emph{generalized Ricci flow} is the parabolic system for a Riemannian metric $G$ and closed three-form $H$ defined by
\begin{align*}
\dt G =&\ -2 \Rc_G + \tfrac{1}{2} H^2,\\
\dt H =&\ \gD_d H.
\end{align*}
This is a parabolic system of equations and basic regularity and long-time existence obstructions have been established in \cite{Streetsexpent}.  A Perelman-type $\FF$-functional for this system was found in \cite{OSW}, and an expander entropy functional was found in \cite{Streetsexpent}.  Some recent results in the homogeneous setting have appeared \cite{ParadisoGRF}, as well as a stability result near Ricci-flat metrics \cite{VezzoniGRF}.  In \cite{GRFbook} it was shown that the equation reduces to Ricci-Yang-Mills flow in the case of a $U(1)$ principal bundle over a Riemann surface.  This extends to $T^k$ bundles:

\begin{prop} \label{p:invGRF} Let $T^k \to M \to \Sigma$ be a principal $T^k$-bundle over a Riemann surface, and suppose $(g_t, \mu_t)$ is a solution of Ricci-Yang-Mills flow (\ref{f:RYM}).  Let
\begin{align*}
G_t = \pi^* g + \tr_h \mu_t \otimes \mu_t,\quad H_t = \tr_h F_{\mu_t} \wedge \mu_t.
\end{align*}
Then $(G_t, {H}_t)$ is a solution of generalized Ricci flow.
\begin{proof} The proof is identical to (cf. \cite{GRFbook} Prop 4.39), which is written for the case $k = 1$ but generalizes immediately to the case of arbitrary $k$.
\end{proof}
\end{prop}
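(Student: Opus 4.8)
The plan is to verify the two evolution equations of generalized Ricci flow directly, by expressing every quantity in a local frame adapted to the bundle and reducing the computation to the base $\Sigma$. First I would fix a local trivialization and write the connection as $\mu^I = d\theta^I + \sum_i A^I_i\, dx^i$, so that $G = \pi^* g + h_{IJ}\, \mu^I \otimes \mu^J$ presents $\pi : (M,G) \to (\Sigma, g)$ as a Riemannian submersion whose fibers are flat tori. Since $h$ is a fixed inner product on $\mathfrak t^k$ (constant over $M$) and the structure group is abelian, the fibers are totally geodesic and the only O'Neill tensor that survives is the integrability tensor $A$ of the horizontal distribution, which is governed precisely by the curvature $F = (F^I)$, with each $F^I = d\mu^I$ basic. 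The whole proposition then reduces to matching the two sides of (\ref{f:GRF}) componentwise in the splitting $TM = \mathcal H \oplus \mathcal V$.

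Next I would compute $\Rc_G$ via O'Neill's submersion formulas in this splitting. The horizontal-horizontal component yields $\Rc_\Sigma$ together with a correction quadratic in $A$, which is exactly $-\tfrac{1}{2} F_{\mu}^2$ in the normalization of (\ref{f:RYM}); the vertical-vertical component is controlled by $\brs{F_{\mu}}^2$ through the pairing $h$; and the mixed horizontal-vertical component is a divergence of the curvature, i.e.\ $d^*_g F_{\mu}$. A parallel computation of $H^2$ for $H = \tr_h F_{\mu} \wedge \mu$, contracting two indices against $G^{-1}$, produces horizontal and vertical pieces of the same structure. Assembling these and inserting the RYM equations $\dt g = -2\Rc_g + F_{\mu}^2$ and $\dt \mu = - d^*_g F_{\mu}$ (the latter accounting for $\dt(\mu \otimes \mu)$ and hence the mixed block), one checks that $\dt G = -2\Rc_G + \tfrac{1}{2} H^2$ holds in each of the three blocks.

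For the second equation I would show $\dt H = \gD_d H$, where $\gD_d$ is the Hodge Laplacian on $M$. Differentiating $H = \tr_h F_{\mu} \wedge \mu$ and substituting $F_{\mu} = d\mu$ and $\dt \mu = -d^*_g F_{\mu}$ expresses $\dt H$ in terms of base operators applied to $F_{\mu}$; separately, since $H$ is the wedge of the basic two-form $\tr_h F_{\mu}$ with the connection one-form $\mu$, the operators $d_M$ and $d^*_M$ acting on $H$ reduce to $d$, $d^*_g$ and $\tr_\gw$ on $\Sigma$, the vertical directions contributing only through $d\mu = F_{\mu}$. Matching the two resulting expressions then yields the heat equation for $H$.

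The main obstacle is the bookkeeping in the Ricci computation --- specifically the mixed horizontal-vertical O'Neill term and the reduction of $\gD_d H$ to base operators --- together with confirming that passing from $k=1$ to general $k$ introduces nothing beyond the inner product $h_{IJ}$. This last point is where \emph{abelianness} is essential: since $[\mathfrak t^k, \mathfrak t^k] = 0$, the forms $\mu^I$ have no Lie bracket interaction, the fibers remain flat and totally geodesic, and every cross term between distinct $\mu^I, \mu^J$ that could obstruct the computation appears as an $h_{IJ}$-contracted curvature expression already present in the $k=1$ case. Hence the argument of (\cite{GRFbook} Prop.\ 4.39) extends verbatim, with the single scalar pairing replaced by $\tr_h$.
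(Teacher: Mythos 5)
Your proposal is correct and follows essentially the same route as the paper, whose proof simply delegates to the direct verification in (\cite{GRFbook} Prop.\ 4.39): compute $\Rc_G$ via the O'Neill formulas for the Riemannian submersion with flat, totally geodesic torus fibers, compute $H^2$ and $\gD_d H$ in the horizontal/vertical splitting, and match blocks against the two RYM equations. Your closing observation that abelianness is what reduces general $k$ to $k=1$ up to contraction with the fixed inner product $h$ is precisely the content of the paper's one-line remark that the cited argument ``generalizes immediately.''
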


\subsubsection{$T^2$-invariant pluriclosed flow on complex surfaces}

Given a complex manifold $(M^{2n}, J)$, a Hermitian metric $g$ is \emph{pluriclosed} if the associated K\"ahler form $\gw$ satisfies $\i \del\delb \gw = 0$.  For a pluriclosed metric we define $H = - d^c \gw = \i (\del - \delb) \gw$, noting $d H = 0$.  There is a Hermitian connection on $TM$, the \emph{Bismut connection}, defined by $\N^B = \N + g^{-1} H$.  This has an associated curvature tensor $\Omega^B$, and the Bismut-Ricci form is the natural contraction
\begin{align*}
\rho_B = \tfrac{1}{2} \tr \Omega^B J.
\end{align*}
The \emph{pluriclosed flow} is the equation
\begin{align*}
\dt \gw =&\ - \rho_B^{1,1}.
\end{align*}
This is a parabolic equation \cite{PCF}, which solves K\"ahler-Ricci flow if the initial metric is K\"ahler.  Furthermore, after a gauge modification, the associated pairs $(g_t, H_t)$ of metrics and three-forms are a solution of generalized Ricci flow \cite{PCFReg}.  Global existence and convergence results for pluriclosed flow have appeared in \cite{LeeStreets, StreetsPCFBI, StreetsGG}.  Pluriclosed flow also preserves generalized K\"ahler geometry \cite{GKRF}, and global existence and convergence results have been shown in this setting \cite{ASnondeg, StreetsSTB, Streetsvisc, StreetsND}, and specifically the results of \cite{StreetsSTB} overlap partially with Corollary \ref{t:PCFcorollary}.

Let us now restrict to the case where $(M^4, J)$ is a complex surface which is the total space of a holomorphic principal $T^2$ bundle over a base manifold $\Sigma$.  Let $Z,W$ denote canonical vertical vector fields associated to a basis $\mathfrak Z, \mathfrak W$ for the torus action, such that $W = JZ$.  Let $g$ denote an invariant Hermitian metric on $J$, and let $\IP{,}$ denote a metric on $\mathfrak t^2$ such that $\IP{\mathfrak Z, \mathfrak W} = 0$.  As explained in \cite{Streetssolitons}, a choice of invariant Hermitian metric $G$ is equivalent to a triple $(g, \mu, \psi)$, where
\begin{align*}
\psi =&\ G(Z,Z) = G(W,W),\\
\mu(X) =&\ \psi^{-1} g(X,Z) \mathfrak Z + \psi^{-1} g(X,W) \mathfrak W,\\
	g(X,Y) =&\ G(X,Y) - \psi \IP{\mu(X), \mu(Y) }.
\end{align*}
Furthermore, the metric is pluriclosed if and only if $\psi$ is constant.  Here $\psi \IP{,}$ is playing the role of the metric $h$ as described above.  By general principle the pluriclosed flow will preserve the $T^2$ symmetry, and the fact that $\psi$ is constant.  As shown in (\cite{Streetssolitons} Lemma 6.2), the natural Hermitian connection is determined by functions $f_1$ and $f_2$ where
\begin{align*}
\mu_f = \left( \mu^{\mathfrak Z} + d^c f_1 + d f_2 \right) \otimes \mathfrak Z + \left( \mu^{\mathfrak W} + d^c f_2 - d f_1 \right) \otimes W.
\end{align*}
Furthermore, the symmetry reduced equations are gauge-equivalent to the Ricci-Yang-Mills flow:
\begin{prop} \label{p:flowreduction} (cf. \cite{Streetssolitons} Proposition 6.3) Given $(M^4, J)$ as above, a one-parameter family of $T^2$ invariant metrics $G_t = \pi^* g_t + \tr_{h_t} \mu_f \otimes \mu_f$ is a solution to normalized pluriclosed flow if and only if
\begin{gather} \label{f:reducedPCF}
\begin{split}
\frac{\del g}{\del t} =&\ -  \left( R - \psi \brs{F_{\mu_f}}^2 + \gl 
\right) g^T,\\
\dt f_1 =&\ \tr_{\gw} F^{\mathfrak Z}_{\mu_{f}} = \gD_{g} f_1 
+ \tr_{\gw} F^{\mathfrak Z}_{\mu},\\
\dt f_2 =&\ \tr_{\gw} F^{\mathfrak W}_{\mu_{f}} = \gD_{g} f_2 
+ \tr_{\gw} F^{\mathfrak W}_{\mu},\\
\dt \psi =&\ - \gl \psi.
\end{split}
\end{gather}
\end{prop}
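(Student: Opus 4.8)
The plan is to reduce the (normalized) pluriclosed flow on the total space $M$ to the system (\ref{f:reducedPCF}) by decomposing the Bismut–Ricci form $\rho_B^{1,1}$ along the bundle splitting, treating the unnormalized case $\gl = 0$ (which is \cite{Streetssolitons} Proposition 6.3) as the model and then threading the scaling parameter through. First I would check that the hypotheses are preserved: since pluriclosed flow is equivariant under the holomorphic $T^2$-action and solutions are unique, invariant initial data stays invariant, and since the flow preserves the pluriclosed condition, which on these bundles is equivalent to $\psi$ being constant on $M$, the function $\psi_t$ remains spatially constant. Hence at each time the solution is described by the reduced data $(g_t, \mu_{f_t}, \psi_t)$, and it remains to identify the equations they satisfy.

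For the core identity I would exploit the two reductions already recorded rather than computing $\Omega^B$ by hand. By \cite{PCFReg}, after a time-dependent gauge transformation generated by a vector field $X_t$ built from the Lee form, the pair $(G_t, H_t)$ with $H_t = - d^c \gw_t$ solves generalized Ricci flow; one computes that for the invariant pluriclosed metric $- d^c \gw = \tr_h F_{\mu_f} \wedge \mu_f$, matching the three-form in Proposition \ref{p:invGRF}. That proposition then identifies the invariant generalized Ricci flow with Ricci–Yang–Mills flow on $\Sigma$, which on a surface is the $\gl = 0$ horizontal and connection equations of (\ref{f:reducedPCF}), namely $\dt \mu = - d^*_g F_{\mu}$ together with $\dt h = 0$, i.e. $\dt \psi = 0$. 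Finally, Lemma \ref{l:freduction} converts the Yang–Mills flow $\dt \mu = - d^*_g F_{\mu}$ into the potential evolution: writing each component of $\mu_f$ as $\bar\mu + d^c f$ gives $\dt f = \gD_g f + \tr_{\gw} F_{\bar\mu} = \tr_{\gw} F_{\mu_f}$, which is exactly the stated $f_1,f_2$ equations.

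The step requiring the most care, and the main obstacle, is the precise form of the Hermitian connection $\mu_f$, with its mixed terms $d^c f_1 + d f_2$ and $d^c f_2 - d f_1$ reflecting the constraint $W = JZ$ that couples the two $\mathfrak t^2$-components through $J$. I would invoke \cite{Streetssolitons} Lemma 6.2 for this parametrization and verify that the off-diagonal $d f$-terms are precisely those generated by the gauge vector field $X_t$ of \cite{PCFReg}, so that the net evolution of $(f_1,f_2)$ is the stated heat-type system while the horizontal block retains its RYM form. Controlling the decomposition of $X_t$ relative to the fibration — so that its vertical part produces the cross-terms and its horizontal part contributes only to the scalar factor of $\dt g$ — is the delicate point, since it is what makes the reduced flow genuinely gauge-\emph{equivalent} to RYM flow rather than literally equal.

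Lastly I would incorporate the normalization. The normalized pluriclosed flow adds $- \gl \gw$ to the right-hand side; under the splitting this contributes $- \gl g$ to the horizontal block, accounting for the $+ \gl$ inside the scalar prefactor of the first equation of (\ref{f:reducedPCF}), and $- \gl \psi$ to the vertical block, giving $\dt \psi = - \gl \psi$, consistently with $\dt h = - \gl h$ in (\ref{f:nRYM}). Since $- \gl \gw$ has no mixed component, the equations for $f_1, f_2$ are unchanged, so the normalized statement reduces to the $\gl = 0$ case derived above, completing the argument.
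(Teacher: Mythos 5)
Your proposal is correct and takes essentially the same route as the paper: the paper cites the direct computation of \cite{Streetssolitons} Proposition 6.3 but then sketches exactly your conceptual argument, passing from pluriclosed flow to generalized Ricci flow via the gauge equivalence of \cite{PCFReg} and then to Ricci--Yang--Mills flow via Proposition \ref{p:invGRF}. Your explicit attention to the gauge vector field generating the $d f$-terms in the parametrization of $\mu_f$, and to threading the normalization $\gl$ through each block, fills in details the paper leaves implicit.
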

The proof in \cite{Streetssolitons} is by direct computation, but a more conceptual proof for a more general case can be given as follows.  Suppose $T^k \to M \to \Sigma$ is a principal $T^k$ bundle.  By taking a product with a trivial $T^k$ bundle we obtain a $T^{2k}$ bundle $\til{M}$ over $\Sigma$, which admits a natural complex structure induced by a choice of complex structure on $\mathfrak t^{2k}$ determined by the natural splitting.  We can extend a choice of principal connection on $M$ using a flat connection to determine a principal connection $\til{\mu}$ on $\til{M}$.  Letting $\xi^i$ denote a basis for $T^k$, it follows that the $(1,1)$ form
\begin{align*}
\gw = \pi^* \gw_{g} + \sum_{i=1}^k \mu^{\xi_i} \wedge \mu^{J \xi_i}
\end{align*}
is positive, and moreover $H = - d^c \gw = \sum_{i=1}^k F^{\xi_i} \wedge \mu^{\xi_i}$.  Since the solution to pluriclosed flow is equivalent to generalized Ricci flow after a gauge transformation (\cite{PCFReg}), it follows from Proposition \ref{p:invGRF} that, up to a gauge transformation, the solution to pluriclosed flow is the same as the associated solution to Ricci-Yang-Mills flow with initial data $(g, \mu)$ on $\Sigma$.

\section{A priori estimates} \label{s:estimates}

In this section we establish a priori estimates building towards the global existence claims of Theorem \ref{t:RYMthm}.  In all cases we will choose a background conformal metric $g_{\Sigma}$ with constant scalar curvature.  With this choice we obtain a solution $u_t$ to the flow of conformal factors (\ref{f:ured}) by Lemma \ref{l:uevolution}.  Furthermore using Hodge theory we can choose a background connection $\bar{\mu}$ such that
\begin{align*}
F_{\bar{\mu}} \equiv \gw_{\Sigma} \otimes \zeta \in \Lambda^2 (\Sigma) \otimes \mathfrak t^k.
\end{align*}
With this choice we apply Lemma \ref{l:freduction} to obtain a solution $f_t$ to the potential flow (\ref{f:fred}).

\subsection{Evolution equations}

\begin{lemma} \label{l:invtracelemma} Given a solution to Ricci-Yang-Mills flow, we have
\begin{align} \label{f:invtraceev}
\left(\dt - \gD \right) e^{-u} =&\ - \brs{\N u}^2_{g_t} e^{-u} + R_{\Sigma} e^{-2u}  - \tfrac{1}{2} \brs{F_{\mu_f}}_{g_t,h_t}^2 e^{-u} + \gl e^{-u}.
\end{align}
\begin{proof} We directly compute using Lemma \ref{l:uevolution}
\begin{align*}
\dt e^{-u} =&\ - e^{-u} \left( \gD u - e^{-u} R_{\Sigma} + \tfrac{1}{2} \brs{F_{\mu_f}}^2_{g_t, h_t} - \gl \right)\\
=&\ \gD e^{-u} - \brs{\N u}^2_{g_t} e^{-u} + R_{\Sigma} e^{-2u}  - \tfrac{1}{2} \brs{F_{\mu_f}}_{g_t,h_t}^2 e^{-u} + \gl e^{-u}.
\end{align*}
\end{proof}
\end{lemma}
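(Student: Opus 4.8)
The plan is to obtain (\ref{f:invtraceev}) by direct computation from the conformal reduction in Lemma \ref{l:uevolution}, keeping careful track of the reference metric for each operator. Three facts drive the argument: differentiating the composite function gives $\dt e^{-u} = - e^{-u}\, \dt u$; in real dimension two the Laplacian acting on functions is conformally covariant, so that $\gD_{g_t} = e^{-u}\gD_{g_{\Sigma}}$ when $g_t = e^{u} g_{\Sigma}$; and the product rule for the divergence yields the pointwise identity
\begin{align*}
\gD_{g_t} e^{-u} = - e^{-u} \gD_{g_t} u + e^{-u}\brs{\N u}^2_{g_t},
\end{align*}
which follows from $\N e^{-u} = - e^{-u}\N u$.

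First I would insert the evolution equation (\ref{f:ured}) for $u$ and use conformal covariance to replace the term $e^{-u}\gD_{g_{\Sigma}} u$ by $\gD_{g_t} u$, so that
\begin{align*}
\dt e^{-u} = - e^{-u}\left( \gD_{g_t} u - e^{-u} R_{\Sigma} + \tfrac{1}{2}\brs{F_{\mu_f}}^2_{g_t,h_t} - \gl \right).
\end{align*}
Expanding the right-hand side produces the terms $R_{\Sigma}e^{-2u}$, $-\tfrac{1}{2} e^{-u}\brs{F_{\mu_f}}^2_{g_t,h_t}$, and $\gl e^{-u}$ exactly as in the statement, together with the leftover Laplacian term $-e^{-u}\gD_{g_t} u$.

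Then I would rewrite this last term using the divergence identity above, converting $-e^{-u}\gD_{g_t} u$ into $\gD_{g_t} e^{-u} - e^{-u}\brs{\N u}^2_{g_t}$. Collecting everything, the Laplacian term passes to the left-hand side as $\gD e^{-u}$ and the gradient term appears with the correct sign, giving precisely (\ref{f:invtraceev}).

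There is no serious obstacle here; the computation is elementary and amounts to a one-line rearrangement once the structural identities are in place. The only point requiring genuine care is the consistent use of the time-dependent metric $g_t = e^{u} g_{\Sigma}$ in the operators $\gD$ and $\N$ appearing in the final formula, together with the two-dimensional conformal covariance that allows one to pass freely between $\gD_{g_t}$ and $\gD_{g_{\Sigma}}$. This is why the author's argument records only the two displayed lines.
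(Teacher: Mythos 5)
Your proposal is correct and follows exactly the paper's computation: substitute the evolution equation for $u$ from Lemma \ref{l:uevolution}, use the two-dimensional conformal covariance $\gD_{g_t} = e^{-u}\gD_{g_\Sigma}$, and convert $-e^{-u}\gD_{g_t} u$ into $\gD_{g_t} e^{-u} - e^{-u}\brs{\N u}^2_{g_t}$. The paper's two displayed lines encode precisely these steps, so there is nothing to add.
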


\begin{lemma} \label{l:gradflemma} Given a solution to Ricci-Yang-Mills flow, we have
\begin{gather} \label{f:est5}
\begin{split}
\left(\dt - \gD \right) \brs{\N f}^2_{g_t,h_0} =&\ - 2 \brs{\N^2 f}^2_{g_t,h_0} - \tfrac{1}{2} \brs{F_{\mu_{f}}}_{g_t,h_t}^2 \brs{\N f}^2_{g_t,h_0} + \gl \brs{\N f}^2_{g_t,h_0}\\
&\ + 2  \IP{\N e^{-u} \otimes \tr_{\gw_{\Sigma}} F_{\bar{\mu}}, \N f}_{g_t,h_0}.
\end{split}
\end{gather}
\begin{proof} We directly compute using Lemma \ref{l:uevolution}, Lemma \ref{l:freduction}, and the Bochner formula
\begin{align*}
\left(\dt - \gD \right) \brs{\N f}^2_{g_t,h_0} =&\ \left( R - \tfrac{1}{2} \brs{F_{\mu_{f}}}_{g_t,h_t}^2 + \gl \right) \brs{\N f}^2_{g_t,h_0} + 2 \IP{\N \left(\gD f + \tr_{\gw} F_{\bar{\mu}} \right), \N f}_{g_t,h_0}\\
&\ - 2 \IP{\gD \N f, \N f}_{g_t,h_0} - 2 \brs{\N^2 f}^2_{g_t,h_0}\\
=&\ - 2 \brs{\N^2 f}^2_{g_t,h_0} - \tfrac{1}{2} \brs{F_{\mu_{f}}}_{g_t,h_t}^2 \brs{\N f}^2_{g_t,h_0} + \gl \brs{\N f}^2_{g_t,h_0}\\
&\ + 2  \IP{\N e^{-u} \otimes \tr_{\gw_{\Sigma}} F_{\bar{\mu}}, \N f}_{g_t,h_0},
\end{align*}
as claimed.
\end{proof}
\end{lemma}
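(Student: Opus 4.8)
Set $\phi := \brs{\N f}^2_{g_t,h_0} = (h_0)_{IJ}\, g_t^{ab}\, \del_a f^I \del_b f^J$, and note at the outset that since $f$ is a function its first covariant derivative is just $df$, so $\phi$ sees the geometry of $g_t$ only through $g_t^{ab}$, and, since $h_0$ is a \emph{fixed} inner product, there will be no contribution from $\dt h$. The plan is to compute $\dt \phi$ and $\gD \phi$ separately and subtract. For the time derivative there are exactly two contributions. Differentiating $g_t^{ab}$: from the conformal reduction (Lemma \ref{l:uevolution}) and the surface identities $\Rc_g = \tfrac12 R g$, $F_{\mu}^2 = \tfrac12 \brs{F_{\mu_f}}^2_{g_t,h_t} g$, the base metric evolves by $\dt g_{ab} = -\left(R - \tfrac12 \brs{F_{\mu_f}}^2_{g_t,h_t} + \gl\right) g_{ab}$, so $\dt g^{ab} = \left(R - \tfrac12 \brs{F_{\mu_f}}^2_{g_t,h_t} + \gl\right) g^{ab}$ and this term contributes $\left(R - \tfrac12 \brs{F_{\mu_f}}^2 + \gl\right)\phi$. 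Differentiating $f$ via its evolution $\dt f = \gD f + \tr_{\gw} F_{\bar{\mu}}$ (Lemma \ref{l:freduction}) contributes $2\IP{\N\left(\gD f + \tr_{\gw} F_{\bar{\mu}}\right), \N f}_{g_t,h_0}$.

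For the Laplacian I would apply the Bochner/Weitzenb\"ock identity componentwise and sum against the parallel inner product $h_0$, writing it in the product-rule form $\gD \phi = 2 \IP{\gD(\N f), \N f} + 2\brs{\N^2 f}^2$, where $\gD(\N f)$ is the connection (rough) Laplacian of the one-form $\N f$. Subtracting, the genuinely nontrivial bookkeeping step is reconciling $\IP{\N \gD f, \N f}$ (from $\dt\phi$) with $\IP{\gD(\N f), \N f}$ (from $\gD\phi$): the commutation $\N \gD f = \gD(\N f) - \Rc(\N f)$, together with $\Rc = \tfrac12 R g$ on a surface, gives $2\IP{\N\gD f, \N f} - 2\IP{\gD(\N f), \N f} = -R\phi$, which cancels exactly against the $R\phi$ produced by the metric variation. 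I expect this commutator term, and keeping the sign conventions for $\gD$ consistent with $R = e^{-u}(R_{\Sigma} - \gD_{g_{\Sigma}} u)$, to be the main place to be careful.

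After these cancellations one is left with $(\dt - \gD)\phi = -2\brs{\N^2 f}^2 - \tfrac12 \brs{F_{\mu_f}}^2_{g_t,h_t}\phi + \gl \phi + 2\IP{\N\left(\tr_{\gw} F_{\bar{\mu}}\right), \N f}$, so it remains only to rewrite the inhomogeneous term. Under the conformal scaling $\gw_{g_t} = e^u \gw_{\Sigma}$ one has $\tr_{\gw} F_{\bar{\mu}} = e^{-u}\tr_{\gw_{\Sigma}} F_{\bar{\mu}}$, and since the background connection $\bar{\mu}$ is chosen so that $F_{\bar{\mu}} = \gw_{\Sigma}\otimes \zeta$, the factor $\tr_{\gw_{\Sigma}} F_{\bar{\mu}} = \zeta$ is covariantly constant; hence $\N\left(\tr_{\gw} F_{\bar{\mu}}\right) = \N(e^{-u}) \otimes \tr_{\gw_{\Sigma}} F_{\bar{\mu}}$. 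Substituting yields precisely (\ref{f:est5}).
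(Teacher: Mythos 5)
Your proposal is correct and follows essentially the same route as the paper: the time derivative is split into the metric-variation term $\left(R - \tfrac12 \brs{F_{\mu_f}}^2_{g_t,h_t} + \gl\right)\brs{\N f}^2$ plus $2\IP{\N(\gD f + \tr_{\gw}F_{\bar\mu}),\N f}$, the Bochner identity converts $-\gD\brs{\N f}^2$ into $-2\IP{\gD\N f,\N f} - 2\brs{\N^2 f}^2$, the Ricci commutator cancels the $R$ term via $\Rc = \tfrac12 R g$, and the inhomogeneous term is rewritten using $\tr_{\gw}F_{\bar\mu} = e^{-u}\tr_{\gw_\Sigma}F_{\bar\mu}$ with $\tr_{\gw_\Sigma}F_{\bar\mu}$ constant. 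The paper's proof is a condensed version of exactly this computation, so there is nothing to add.
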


\subsection{The case \texorpdfstring{$\chi(\Sigma) < 0$}{}}

Assume $\chi(\Sigma) < 0$, so that by the uniformization theorem we may choose the background metric $g_{\Sigma}$ so that $R_{g_{\Sigma}} = -1$.  In this case we also set $\gl = 1$, and these choices hold throughout this subsection.

\begin{prop} \label{p:invtraceest} Given a solution to Ricci-Yang-Mills flow with $\chi(\Sigma) < 0$, we have
\begin{align*}
\sup_{M \times \{t\}} \left( e^{-u} - 1 \right) \leq&\ e^{-t} \sup_{M \times \{0\}} \left( e^{-u} - 1 \right).
\end{align*}
\begin{proof} Specializing Lemma \ref{l:invtracelemma} to the case $R_{\Sigma} = -1$, $\gl = 1$, and dropping negative terms yields
\begin{align*}
\left( \dt - \gD \right) \left( e^{-u} - 1\right) \leq&\ - (e^{-u})^2 + e^{-u}\\
=&\ - \left( e^{-u} - 1 \right) - \left(e^{-u} - 1 \right)^2.
\end{align*}
The result follows from the maximum principle.
\end{proof}
\end{prop}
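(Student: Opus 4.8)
The plan is to specialize the evolution equation for $e^{-u}$ supplied by Lemma \ref{l:invtracelemma} to the present normalization and then run a scalar maximum principle. First I would substitute $R_{\Sigma} = -1$ and $\gl = 1$ into (\ref{f:invtraceev}), obtaining
\[
\left(\dt - \gD\right) e^{-u} = -\brs{\N u}^2_{g_t} e^{-u} - e^{-2u} - \tfrac{1}{2}\brs{F_{\mu_f}}^2_{g_t,h_t} e^{-u} + e^{-u}.
\]
The key observation is that the gradient term $-\brs{\N u}^2_{g_t} e^{-u}$ and the curvature term $-\tfrac{1}{2}\brs{F_{\mu_f}}^2_{g_t,h_t} e^{-u}$ are both nonpositive, since $e^{-u} > 0$. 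Discarding them leaves the differential inequality $(\dt - \gD) e^{-u} \leq -e^{-2u} + e^{-u}$, whose reaction term now depends only on $e^{-u}$ itself, decoupled from the connection and its gradient.

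Next I would set $w := e^{-u} - 1$. Since both $\gD$ and $\dt$ annihilate the constant, $w$ obeys the same inequality, and the reaction term factors cleanly: writing $e^{-u} = w + 1$ gives $-e^{-2u} + e^{-u} = -(w+1)^2 + (w+1) = -w - w^2$. Hence $(\dt - \gD) w \leq -w - w^2 \leq -w$, where in the last step I discard the nonpositive $-w^2$; retaining it would only produce faster decay than the claimed rate requires.

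Finally I would invoke the scalar maximum principle. Because $\Sigma$ is compact, $\bar{w}(t) := \sup_{\Sigma} w(\cdot, t)$ is attained and is locally Lipschitz in $t$; at a spatial maximum one has $\gD w \leq 0$, so Hamilton's form of the maximum principle gives $\tfrac{d}{dt}\bar{w} \leq -\bar{w}$ (as an upper Dini derivative), and Gronwall's inequality integrates this to $\bar{w}(t) \leq e^{-t}\bar{w}(0)$, which is exactly the asserted estimate.

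I do not anticipate a genuine obstacle here: the whole argument hinges on the favorable sign combination $R_{\Sigma} = -1$, $\gl = 1$, which converts the reaction term into $-w(1+w)$, whose linearization at $w = 0$ is the decaying $-w$. The only points requiring care are that the inequality $-w - w^2 \leq -w$ holds irrespective of the sign of $w$, so the estimate is valid whether $\sup(e^{-u}-1)$ starts positive or negative, and that compactness of $\Sigma$ guarantees the supremum is realized, legitimizing the pointwise application of the maximum principle.
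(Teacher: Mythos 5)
Your proposal is correct and follows essentially the same route as the paper: specialize Lemma \ref{l:invtracelemma} to $R_{\Sigma} = -1$, $\gl = 1$, discard the nonpositive gradient and curvature terms, rewrite the reaction term as $-w - w^2$ for $w = e^{-u}-1$, and conclude by the maximum principle. Your added remarks --- dropping $-w^2 \leq 0$ to reduce to the linear comparison $(\dt - \gD)w \leq -w$ and noting that the resulting Gronwall bound holds regardless of the sign of $\sup w$ --- simply make explicit the maximum principle step the paper leaves implicit.
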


\begin{prop} \label{p:fest} Given a solution to Ricci-Yang-Mills flow with $\chi(\Sigma) < 0$, we have
\begin{align*}
\sup_{M \times \{t\}} \brs{f}_{h_0} \leq C (1+t).
\end{align*}
\begin{proof} Using the a priori estimate of Proposition \ref{p:invtraceest} we see by Lemma \ref{l:freduction} that
\begin{align*}
\left(\dt - \gD \right) f =&\ \tr_{\gw} F_{\bar{\mu}} \leq C.
\end{align*}
The upper bound follows by the maximum principle, and the lower bound is similar.
\end{proof}
\end{prop}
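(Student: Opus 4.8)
The plan is to read off the result from the linear heat-type equation satisfied by $f$, once the inhomogeneous source term is shown to be uniformly bounded. Lemma \ref{l:freduction} gives $\left( \dt - \gD \right) f = \tr_{\gw} F_{\bar{\mu}}$, so the whole argument reduces to controlling $\tr_{\gw} F_{\bar{\mu}}$ and then invoking the maximum principle. With the background connection chosen in this section so that $F_{\bar{\mu}} = \gw_{\Sigma} \otimes \zeta$, and since $g_t = e^{u_t} g_{\Sigma}$ has area form $\gw_{g_t} = e^{u_t} \gw_{\Sigma}$, one computes $\tr_{\gw} F_{\bar{\mu}} = e^{-u} \zeta$. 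Proposition \ref{p:invtraceest} supplies a uniform upper bound for $e^{-u}$ along the flow, so $\brs{\tr_{\gw} F_{\bar{\mu}}}_{h_0} = e^{-u} \brs{\zeta}_{h_0} \leq C$, where $C$ depends only on the initial data.

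With this bound in hand, I would fix an $h_0$-orthonormal basis of $\mathfrak t^k$ and work component by component, writing $f = \sum_I f^I e_I$; each scalar $f^I$ satisfies $\left( \dt - \gD \right) f^I = \left( \tr_{\gw} F_{\bar{\mu}} \right)^I$ with $\brs{(\tr_{\gw} F_{\bar{\mu}})^I} \leq C$. Then I would apply the maximum principle to the auxiliary functions $f^I - Ct$ and $f^I + Ct$: since $\left( \dt - \gD \right)(f^I - Ct) \leq 0$ and $\left( \dt - \gD \right)(f^I + Ct) \geq 0$, the spatial maximum of the former is nonincreasing and the spatial minimum of the latter is nondecreasing. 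Comparing against $t = 0$ yields $\brs{f^I}(t) \leq \brs{f^I}(0) + Ct$, and summing over the finitely many components gives $\sup_{M \times \{t\}} \brs{f}_{h_0} \leq C(1+t)$, as desired.

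There is no serious obstacle here: the evolution of $f$ is a genuine inhomogeneous heat equation with no zeroth-order reaction term in $f$, so the only quantity requiring control is the forcing $\tr_{\gw} F_{\bar{\mu}}$. The one point worth emphasizing is that this forcing does not decay in time — it is merely bounded, via the estimate on $e^{-u}$ from Proposition \ref{p:invtraceest} — so one cannot expect better than linear-in-$t$ growth for $f$ by this method, which is exactly the asserted bound. Later applications of this proposition will only use that $\brs{f}$ grows at most linearly, so this estimate suffices.
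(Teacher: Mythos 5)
Your proposal is correct and follows essentially the same route as the paper: identify $\tr_{\gw} F_{\bar{\mu}} = e^{-u}\zeta$ as the forcing term, bound it uniformly via the estimate on $e^{-u}$ from Proposition \ref{p:invtraceest}, and apply the maximum principle to obtain linear growth. You have merely spelled out the componentwise comparison with $f^I \mp Ct$ that the paper leaves implicit.
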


\begin{lemma} \label{l:ubprop1} Given a solution to Ricci-Yang-Mills flow  with $\chi(\Sigma) < 0$, there exists a constant $A > 0$ so that
\begin{align*}
\left(\dt - \gD \right) & \left( A (e^{-u} - 1) + e^{-t} \brs{\N f}_{g_t,h_0}^2 + e^{-t} \brs{f}_{h_0}^2 \right)\\
\leq&\ - \brs{F_{\mu_f}}_{g_t,h_t}^2 - \frac{C^{-1} A}{2} \brs{\N e^{-u}}^2_{g_t} - e^{-t} \brs{\N f}_{g_t,h_0}^2 - \frac{A}{2} e^{-2u} + 2 A e^{-u}.
\end{align*}
\begin{proof} Fix $A > 0$ and let $\Phi = A \left( e^{-u} - 1 \right) + e^{-t} \brs{\N f}^2_{g_t,h_0} + e^{-t} |f|_{h_0}^2$.  By combining Lemmas \ref{l:freduction}, \ref{l:invtracelemma}, and \ref{l:gradflemma} we obtain
\begin{gather} \label{f:ubprop10}
\begin{split}
\left( \dt - \gD \right) \Phi =&\ A \left\{ - \brs{\N u}^2_{g_t} e^{-u} - \tfrac{1}{2} \brs{F_{\mu_f}}_{g_t,h_t}^2 e^{-u} - e^{-2u} + e^{-u} \right\}\\
&\ + e^{-t} \left\{- 2 \brs{\N^2 f}^2_{g_t,h_0} - \brs{F_{\mu_f}}_{g_t,h_t}^2 \brs{\N f}^2_{g_t,h_0} + 2 \IP{\N e^{-u} \otimes \tr_{\gw_{\Sigma}} F_{\mu} , \N f}_{g_t,h_0} \right\}\\
&\ - 2 e^{-t} \brs{\N f}^2_{g_t,h_0} + 2 e^{-t} e^{-u} \left< \tr_{\gw_{\Sigma}} F_{\bar{\mu}}, f \right>_{h_0} - e^{-t} |f|_{h_0}^2\\
\leq&\ - C^{-1} A \brs{\N e^{-u} }^2_{g_t} + A \left( e^{-u} - e^{-2u} \right)\\
&\ - 2 e^{-t} \brs{\N^2 f}^2_{g_t,h_0}  + C e^{-t} \brs{\N e^{-u}}_{g_t} \brs{\N f}_{g_t,h_0} - 2 e^{-t} \brs{\N f}^2_{g_t,h_0}\\
&\ + C e^{-t} e^{-u} \brs{f}_{h_0} - e^{-t} |f|_{h_0}^2\\
\leq&\ - 2 e^{-t} \brs{\N^2 f}^2_{g_t,h_0} - \frac{C^{-1} A}{2} \brs{\N e^{-u}}^2_{g_t} - e^{-t} \brs{\N f}^2_{g_t,h_0} + A \left( e^{-u} - e^{-2u} \right)\\
&\ + C (1 + t)e^{-t} e^{-u},
\end{split}
\end{gather}
where the second line follows from the estimate of Proposition \ref{p:invtraceest}, and the third line follows by choosing $A$ sufficiently large and applying the Cauchy-Schwarz inequality, and the estimate of Proposition \ref{p:fest}.  Next we use the formula $F_{\mu_f} = F_{\bar{\mu}} + \i \del \delb f $, together with the fact that $h_t = e^{-t} h_0$ to obtain the estimate
\begin{gather} \label{f:ubprop15}
\begin{split}
\brs{\N^2 f}^2_{g_t,h_0} \geq&\ \brs{ \i \del \delb f}^2_{g_t,h_0}\\
=&\ \brs{F_{\mu_f} - F_{\bar{\mu}}}^2_{g_t,h_0}\\
\geq&\ \tfrac{1}{2} \brs{F_{\mu_f}}^2_{g_t,h_0} - 4 \brs{F_{\bar{\mu}}}^2_{g_t,h_0}\\
\geq&\ \tfrac{1}{2} e^t \brs{F_{\mu_f}}^2_{g,h} - C e^{-2u}.
\end{split}
\end{gather}
Inserting this into (\ref{f:ubprop10}) and choosing $A$ sufficiently large yields the inequality 
\begin{align*}
\left( \dt - \gD \right) \Phi \leq&\ - \brs{F_{\mu_f}}^2_{g_t,h_t} - \frac{C^{-1} A}{2} \brs{\N e^{-u}}^2_{g_t} - e^{-t} \brs{\N f}^2_{g_t,h_0} + A \left( e^{-u} - e^{-2u} \right)\\
&\ + C (1 + t) e^{-t} e^{-u} + C (e^{-u})^2\\
\leq&\ - \brs{F_{\mu_f}}^2_{g_t,h_t} - \frac{C^{-1} A}{2} \brs{\N e^{-u}}^2_{g_t} - e^{-t} \brs{\N f}^2_{g_t,h_0} - \frac{A}{2} e^{-2u} + 2 A e^{-u},
\end{align*}
where the last line follows by choosing $A$ sufficiently large.
\end{proof}
\end{lemma}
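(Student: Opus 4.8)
The plan is to set $\Phi = A(e^{-u} - 1) + e^{-t}\brs{\N f}^2_{g_t,h_0} + e^{-t}\brs{f}^2_{h_0}$ and compute $\left(\dt - \gD\right)\Phi$ directly from the evolution equations already established, then estimate term by term using the a priori bounds of Propositions \ref{p:invtraceest} and \ref{p:fest}. The three constituent evolution equations are: Lemma \ref{l:invtracelemma} (specialized to $R_{\Sigma} = -1$, $\gl = 1$) for $e^{-u}$; Lemma \ref{l:gradflemma} (with $\gl = 1$) for $\brs{\N f}^2_{g_t,h_0}$; and an evolution for $\brs{f}^2_{h_0}$, which I would derive from Lemma \ref{l:freduction} via the Bochner-type identity $\left(\dt - \gD\right)\brs{f}^2_{h_0} = 2\IP{\dt f - \gD f, f}_{h_0} - 2\brs{\N f}^2_{g_t,h_0} = 2 e^{-u}\IP{\tr_{\gw_{\Sigma}} F_{\bar{\mu}}, f}_{h_0} - 2\brs{\N f}^2_{g_t,h_0}$, using $\tr_{\gw} F_{\bar{\mu}} = e^{-u}\tr_{\gw_{\Sigma}}F_{\bar{\mu}}$. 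The one bookkeeping subtlety is the time-dependent weight: for any $w$ one has $\left(\dt - \gD\right)(e^{-t}w) = -e^{-t}w + e^{-t}\left(\dt - \gD\right)w$, and the extra $-e^{-t}\brs{\N f}^2$ this produces cancels exactly the $+\gl\brs{\N f}^2 = +\brs{\N f}^2$ coming from Lemma \ref{l:gradflemma}. Carrying this out yields the opening identity in \eqref{f:ubprop10}.

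For the estimation I would proceed as follows. Proposition \ref{p:invtraceest} gives a uniform upper bound $e^{-u} \le C$, equivalently $e^{u} \ge C^{-1}$; since $\brs{\N e^{-u}}^2_{g_t} = e^{-2u}\brs{\N u}^2_{g_t}$ one has $\brs{\N u}^2_{g_t} e^{-u} = e^{u}\brs{\N e^{-u}}^2_{g_t}$, so the good term $-A\brs{\N u}^2_{g_t} e^{-u}$ is controlled by $-C^{-1}A\brs{\N e^{-u}}^2_{g_t}$. I would discard the manifestly nonpositive terms $-\tfrac{A}{2}\brs{F_{\mu_f}}^2 e^{-u}$ and $-e^{-t}\brs{f}^2_{h_0}$, and retain $-2e^{-t}\brs{\N^2 f}^2_{g_t,h_0}$. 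The two cross terms are then handled by Young's inequality (using that $\brs{\tr_{\gw_{\Sigma}}F_{\bar{\mu}}}$ is bounded): $2e^{-t}\IP{\N e^{-u}\otimes \tr_{\gw_{\Sigma}} F_{\bar{\mu}}, \N f}$ is bounded by $\tfrac{C^{-1}A}{2}\brs{\N e^{-u}}^2_{g_t} + e^{-t}\brs{\N f}^2_{g_t,h_0}$ once $A$ is large, which leaves $-\tfrac{C^{-1}A}{2}\brs{\N e^{-u}}^2_{g_t}$ and, against $-2e^{-t}\brs{\N f}^2_{g_t,h_0}$, a surviving $-e^{-t}\brs{\N f}^2_{g_t,h_0}$; the remaining cross term $2e^{-t}e^{-u}\IP{\tr_{\gw_{\Sigma}}F_{\bar{\mu}}, f}$ is bounded by $C(1+t)e^{-t}e^{-u}$ via Proposition \ref{p:fest}. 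This yields the final inequality inside \eqref{f:ubprop10}.

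The decisive step is to convert the retained Hessian term into the full curvature term on the right-hand side via \eqref{f:ubprop15}. Because $h_t = e^{-t}h_0$, the norm relation $\brs{F_{\mu_f}}^2_{g_t,h_0} = e^{t}\brs{F_{\mu_f}}^2_{g_t,h_t}$ together with the pointwise bound $\brs{F_{\bar{\mu}}}^2_{g_t,h_0} \le C e^{-2u}$ (since $F_{\bar\mu} = \gw_{\Sigma}\otimes\zeta$ is fixed and a two-form norm scales like $e^{-2u}$) gives $\brs{\N^2 f}^2_{g_t,h_0} \ge \brs{\i\del\delb f}^2_{g_t,h_0} = \brs{F_{\mu_f} - F_{\bar{\mu}}}^2_{g_t,h_0} \ge \tfrac{1}{2}e^{t}\brs{F_{\mu_f}}^2_{g_t,h_t} - Ce^{-2u}$, hence $-2e^{-t}\brs{\N^2 f}^2_{g_t,h_0} \le -\brs{F_{\mu_f}}^2_{g_t,h_t} + Ce^{-2u}$. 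This is the mechanism by which the dissipative Hessian term dominates the \emph{entire} curvature $\brs{F_{\mu_f}}^2$, and I expect this to be the crux: getting the constants to line up (the $\tfrac{1}{2}$ against the $4$, and the weight $e^{t}$ against $e^{-t}$) is precisely where the argument would fail without the exact scaling $h_t = e^{-t}h_0$ built into the normalized flow \eqref{f:nRYM}.

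Finally I would collect terms and fix $A$ large. The $Ce^{-2u}$ produced by \eqref{f:ubprop15} is absorbed into $-Ae^{-2u}$ to leave $-\tfrac{A}{2}e^{-2u}$ (valid once $A \ge 2C$), while $C(1+t)e^{-t}e^{-u} \le Ce^{-u}$ (using $(1+t)e^{-t}\le 1$ for $t\ge 0$) combines with $Ae^{-u}$ to give $2Ae^{-u}$ (valid once $A\ge C$). Taking $A$ larger than all the finitely many thresholds appearing above produces the claimed inequality.
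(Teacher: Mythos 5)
Your proposal is correct and follows essentially the same route as the paper: the same auxiliary function $\Phi$, the same combination of Lemmas \ref{l:freduction}, \ref{l:invtracelemma}, \ref{l:gradflemma} with the bounds of Propositions \ref{p:invtraceest} and \ref{p:fest}, and the identical key step of converting $-2e^{-t}\brs{\N^2 f}^2_{g_t,h_0}$ into $-\brs{F_{\mu_f}}^2_{g_t,h_t}$ via $F_{\mu_f} = F_{\bar{\mu}} + \i\del\delb f$ and the scaling $h_t = e^{-t}h_0$. Your additional bookkeeping (the cancellation of the $+\gl\brs{\N f}^2$ term against the derivative of the weight $e^{-t}$, and the explicit norm-scaling identities) only makes explicit what the paper leaves implicit.
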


\begin{prop} \label{p:gradfest} Given a solution to Ricci-Yang-Mills flow  with $\chi(\Sigma) < 0$, there exists a constant $C > 0$ so that
\begin{align*}
\sup_{M \times \{t\}} e^{-t} \brs{\N f}^2_{g_t,h_0} \leq&\ C.
\end{align*}
\begin{proof} Defining $\Phi$ as in Lemma \ref{l:ubprop1} and returning to line (\ref{f:ubprop10}) and using the result of Proposition \ref{p:invtraceest} yields the differential inequality
\begin{align*}
\left(\dt - \gD \right) \Phi \leq&\ - e^{-t} \brs{\N f}^2_{g_t,h_0} + C.
\end{align*}
Using the estimates of Proposition \ref{p:invtraceest} and \ref{p:fest}, at a sufficiently large maximum for $\Phi$ it follows that  $e^{-t} \brs{\N f}^2_{g_t,h_0}$ is also arbitrarily large, yielding a contradiction.  Hence $\Phi$ has an a priori upper bound and thus so does $e^{-t} \brs{\N f}^2_{g_t,h_0}$.
\end{proof}
\end{prop}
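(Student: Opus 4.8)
The plan is to run a maximum principle argument on the auxiliary quantity
\[
\Phi = A\left( e^{-u} - 1 \right) + e^{-t} \brs{\N f}^2_{g_t,h_0} + e^{-t} \brs{f}^2_{h_0}
\]
already assembled in Lemma \ref{l:ubprop1}, extracting a differential inequality whose reaction term is negative and proportional to the quantity we wish to bound. Rather than proceeding to the fully absorbed final inequality of Lemma \ref{l:ubprop1}, I would return to the intermediate estimate (\ref{f:ubprop10}), stopping at the line containing the term $-e^{-t}\brs{\N f}^2_{g_t,h_0}$, and invoke Proposition \ref{p:invtraceest}, which supplies a uniform bound $e^{-u} \leq C$. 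With this the inhomogeneous terms $A(e^{-u} - e^{-2u})$ and $C(1+t)e^{-t}e^{-u}$ are each controlled by a constant (the latter using that $(1+t)e^{-t}$ is bounded), and discarding the nonpositive contributions $-2 e^{-t}\brs{\N^2 f}^2_{g_t,h_0}$ and $-\tfrac{C^{-1}A}{2}\brs{\N e^{-u}}^2_{g_t}$ yields the clean inequality
\[
\left(\dt - \gD\right)\Phi \leq - e^{-t}\brs{\N f}^2_{g_t,h_0} + C.
\]

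The second ingredient is the observation that the two summands of $\Phi$ other than $e^{-t}\brs{\N f}^2_{g_t,h_0}$ are themselves a priori bounded: $A(e^{-u}-1)$ is bounded above and below by Proposition \ref{p:invtraceest} together with $e^{-u} > 0$, while $e^{-t}\brs{f}^2_{h_0} \leq C(1+t)^2 e^{-t}$ is bounded by Proposition \ref{p:fest}. Consequently $\Phi$ and $e^{-t}\brs{\N f}^2_{g_t,h_0}$ differ only by a uniformly bounded quantity, so an a priori upper bound for $\Phi$ is equivalent to the desired bound for the target. Since $M$ is compact, $\Phi$ attains its spatial maximum; at such a point $\gD \Phi \leq 0$, so the differential inequality above forces $\dt \Phi \leq - e^{-t}\brs{\N f}^2_{g_t,h_0} + C$ there. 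If $\max_M \Phi$ were large, then at the maximum $e^{-t}\brs{\N f}^2_{g_t,h_0}$ would be correspondingly large, since the remaining contributions to $\Phi$ are bounded; this would make $\dt \Phi < 0$, precluding further growth. The standard comparison argument for the evolution of $\max_M \Phi$ then produces the asserted uniform bound.

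I expect the only delicate point to be the bookkeeping in the final step: one must verify that the threshold at which the negative reaction term $-e^{-t}\brs{\N f}^2_{g_t,h_0}$ overcomes the constant $C$ is crossed precisely when $\Phi$ exceeds an explicit constant, and this matching of the size of $\Phi$ to the size of $e^{-t}\brs{\N f}^2_{g_t,h_0}$ is exactly what the boundedness of the two remaining summands from Propositions \ref{p:invtraceest} and \ref{p:fest} is used to guarantee. Everything else is routine substitution of the already-established estimates.
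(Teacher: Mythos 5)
Your argument is correct and coincides with the paper's own proof: both return to the intermediate inequality (\ref{f:ubprop10}), use Proposition \ref{p:invtraceest} to reduce it to $\left(\dt - \gD\right)\Phi \leq -e^{-t}\brs{\N f}^2_{g_t,h_0} + C$, and then use the boundedness of the other summands of $\Phi$ (via Propositions \ref{p:invtraceest} and \ref{p:fest}) to conclude via the maximum principle. No substantive differences.
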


\begin{prop} \label{p:ubprop2} Given a solution to Ricci-Yang-Mills flow  with $\chi(\Sigma) < 0$, there exists a constant $C > 0$ so that
\begin{align*}
\sup_{M \times \{t\}} u \leq C.
\end{align*}
\begin{proof} Choose $A > 0$ so that the conclusion of Lemma \ref{l:ubprop1} holds, and let
\begin{align*}
\Phi = u + A ( e^{-u} - 1) + e^{-t} \brs{\N f}^2_{g_t,h_0} + e^{-t} \brs{f}_{h_0}^2
\end{align*}
Combining Lemma \ref{l:uevolution} and Lemma \ref{l:ubprop1} we obtain, dropping some negative terms,
\begin{gather} \label{f:ub210}
\begin{split}
\left(\dt - \gD \right) \Phi \leq&\ \tfrac{1}{2} \brs{F_{\mu_f}}^2_{g_t,h_t} + e^{-u} - 1 + \left\{ - \brs{F_{\mu_f}}_{g_t,h_t}^2 - \frac{A}{2} e^{-2u} + 2 A e^{-u} \right\}\\
\leq&\ C e^{-u} - 1.
\end{split}
\end{gather}
Suppose there exists $(x_0,t_0)$ such that
\begin{align*}
\Phi(x_0,t_0) = \sup_{M \times [0,t_0]} \Phi = B.
\end{align*}
Using Propositions \ref{p:invtraceest}, \ref{p:fest}, and \ref{p:gradfest}, if $B$ is sufficiently large, it follows that
\begin{align*}
u(x_0,t_0) \geq \frac{B}{2}.
\end{align*}
Rearranging this inequality we obtain
\begin{align*}
e^{-u}(x_0,t_0) \leq&\ e^{-\frac{B}{2}}.
\end{align*}
Since $(x_0,t_0)$ is a spacetime maximum for $\Phi$, returning to (\ref{f:ub210}) and applying the maximum principle we obtain
\begin{align*}
0 \leq&\ \left[ \left(\dt - \gD \right) \Phi \right](x_0,t_0) \leq C e^{-\frac{B}{2}} - 1,
\end{align*}
a contradiction for $B$ chosen sufficiently large.  It follows that $\Phi$ has a uniform upper bound, and the proposition follows.
\end{proof}
\end{prop}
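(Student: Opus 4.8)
The plan is to run a maximum-principle argument on a test quantity that couples $u$ to the auxiliary function appearing in Lemma \ref{l:ubprop1}. The natural starting point is the evolution of the conformal factor from Lemma \ref{l:uevolution}, which in the present setting ($R_{\Sigma} = -1$, $\gl = 1$) rewrites, using $\gD = e^{-u} \gD_{g_{\Sigma}}$ on the surface, as
\begin{align*}
\left( \dt - \gD \right) u = e^{-u} + \tfrac{1}{2} \brs{F_{\mu_f}}^2_{g_t,h_t} - 1.
\end{align*}
The obstacle is immediate: the curvature term $\tfrac{1}{2}\brs{F_{\mu_f}}^2$ is positive and, at this stage, entirely uncontrolled, so by itself this equation cannot prevent $u$ from growing. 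The whole strategy is therefore to borrow a matching \emph{negative} curvature term from elsewhere.

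That negative term is exactly what Lemma \ref{l:ubprop1} provides. Writing $\Phi_0 := A(e^{-u}-1) + e^{-t}\brs{\N f}^2_{g_t,h_0} + e^{-t}\brs{f}^2_{h_0}$, that lemma shows $(\dt - \gD)\Phi_0$ is bounded above by $-\brs{F_{\mu_f}}^2_{g_t,h_t}$ plus several manifestly nonpositive terms and the controllable quantity $-\frac{A}{2}e^{-2u} + 2Ae^{-u}$. I would set $\Phi := u + \Phi_0$ and add the two inequalities: the positive $\tfrac{1}{2}\brs{F_{\mu_f}}^2$ from $u$ is dominated by the $-\brs{F_{\mu_f}}^2$ from $\Phi_0$, leaving a surplus $-\tfrac{1}{2}\brs{F_{\mu_f}}^2 \le 0$, which I discard together with the gradient terms $-\frac{C^{-1}A}{2}\brs{\N e^{-u}}^2$ and $-e^{-t}\brs{\N f}^2$ and the good term $-\frac{A}{2}e^{-2u}$. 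Choosing $A$ large so that the constant in front of $e^{-u}$ is absorbed, the surviving inequality collapses to
\begin{align*}
\left( \dt - \gD \right) \Phi \leq C e^{-u} - 1.
\end{align*}

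It then remains to extract the bound by a spacetime maximum argument. Suppose $\Phi$ attains a large maximum $B$ over $M \times [0,t_0]$ at a point $(x_0,t_0)$. Here the three already-established estimates do the work: Proposition \ref{p:invtraceest} bounds $e^{-u}$ (hence $A(e^{-u}-1)$) from above, Proposition \ref{p:gradfest} bounds $e^{-t}\brs{\N f}^2$, and Proposition \ref{p:fest} gives $\brs{f} \le C(1+t)$ so that $e^{-t}\brs{f}^2 \le C e^{-t}(1+t)^2$ is bounded. Thus the three summands of $\Phi_0$ contribute a bounded amount, and if $B$ is large almost all of $\Phi(x_0,t_0)$ must come from $u$, forcing $u(x_0,t_0) \ge B/2$ and hence $e^{-u}(x_0,t_0) \le e^{-B/2}$. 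Since $(\dt - \gD)\Phi \ge 0$ at a spacetime maximum, the displayed inequality yields $0 \le C e^{-B/2} - 1$, which fails once $B$ is chosen large. This contradiction bounds $\Phi$ from above; as each summand of $\Phi_0$ is bounded below (with $e^{-u} > 0$ giving $A(e^{-u}-1) \ge -A$ and the gradient and $\brs{f}$ terms nonnegative), the identity $u = \Phi - \Phi_0$ transfers the bound to $u$, as claimed.

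I expect essentially all of the difficulty to sit in Lemma \ref{l:ubprop1}, whose proof must construct the combination $\Phi_0$ and select $A$ large enough that the cross term $C e^{-t}\brs{\N e^{-u}}\,\brs{\N f}$ is absorbed by $-C^{-1}A\brs{\N e^{-u}}^2 - e^{-t}\brs{\N f}^2$ while a full $-\brs{F_{\mu_f}}^2$ is retained. Granting that lemma and the a priori estimates of Propositions \ref{p:invtraceest}, \ref{p:fest}, and \ref{p:gradfest}, the present proposition reduces to the short and routine maximum-principle argument sketched above, and I do not anticipate any further analytic subtlety.
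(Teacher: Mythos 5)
Your proposal is correct and follows essentially the same route as the paper: the same test function $\Phi = u + A(e^{-u}-1) + e^{-t}\brs{\N f}^2_{g_t,h_0} + e^{-t}\brs{f}^2_{h_0}$, the same cancellation of $\tfrac{1}{2}\brs{F_{\mu_f}}^2$ against the $-\brs{F_{\mu_f}}^2$ supplied by Lemma \ref{l:ubprop1}, and the same spacetime-maximum contradiction using Propositions \ref{p:invtraceest}, \ref{p:fest}, and \ref{p:gradfest} to force $u(x_0,t_0) \ge B/2$. No gaps.
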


\subsection{The case \texorpdfstring{$\chi(\Sigma) = 0$}{}}

Assume $\chi(\Sigma) = 0$.  By the uniformization theorem we may choose the background metric $g_{\Sigma}$ so that $R_{\Sigma} = 0$.  In this case we again fix $\gl = 1$, and these choices hold throughout this subsection.

\begin{prop} \label{p:KodLB} Given a solution to Ricci-Yang-Mills flow  with $\chi(\Sigma) = 0$, we have
\begin{align*}
\sup_{M \times \{t\}} e^{-u} \leq&\ e^{t} \sup_{M \times \{0\}} e^{-u}.
\end{align*}
\begin{proof} Since in this setting $R_{\Sigma} = 0$, this follows directly by applying the maximum principle to the evolution equation of Lemma \ref{l:invtracelemma}.
\end{proof}
\end{prop}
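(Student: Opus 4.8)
The plan is to specialize the evolution equation of Lemma \ref{l:invtracelemma} to the present setting and then apply the scalar maximum principle. Since $\chi(\Sigma) = 0$, uniformization allows the choice $R_{\Sigma} = 0$, and in this subsection we have fixed $\gl = 1$. Substituting these into (\ref{f:invtraceev}) gives
\begin{align*}
\left(\dt - \gD\right) e^{-u} = - \brs{\N u}^2_{g_t} e^{-u} - \tfrac{1}{2} \brs{F_{\mu_f}}_{g_t,h_t}^2 e^{-u} + e^{-u}.
\end{align*}
The key observation is that, because $e^{-u} > 0$, the first two terms on the right-hand side are nonpositive, so they may simply be discarded to obtain the differential inequality $\left(\dt - \gD\right) e^{-u} \leq e^{-u}$.

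Next I would remove the remaining zeroth-order reaction term by a standard substitution. Setting $\phi := e^{-t} e^{-u}$, a direct computation gives
\begin{align*}
\left(\dt - \gD\right) \phi = e^{-t}\left[ \left(\dt - \gD\right) e^{-u} - e^{-u}\right] \leq 0,
\end{align*}
so that $\phi$ is a subsolution of the heat equation on the compact manifold $M$. The maximum principle then shows that $\sup_{M \times \{t\}} \phi$ is nonincreasing in $t$, that is, $\sup_{M \times \{t\}} e^{-t} e^{-u} \leq \sup_{M \times \{0\}} e^{-u}$, which is exactly the claimed bound after multiplying through by $e^{t}$.

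There is no serious obstacle here: the content is entirely contained in the sign structure of (\ref{f:invtraceev}). The only points worth checking are that the discarded gradient and curvature terms genuinely carry the correct sign (which holds since $e^{-u}$, $\brs{\N u}^2_{g_t}$, and $\brs{F_{\mu_f}}^2_{g_t,h_t}$ are all nonnegative), and that the normalization constant $\gl = 1$ is precisely what produces the exponential growth factor $e^{t}$. One could equivalently argue by applying a Gr\"onwall estimate to the time-derivative (in the barrier sense) of the spatial maximum $\sup_M e^{-u}$, but the subsolution formulation via $\phi$ is cleanest.
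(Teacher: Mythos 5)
Your proof is correct and follows the same route as the paper: specialize Lemma \ref{l:invtracelemma} to $R_{\Sigma} = 0$, $\gl = 1$, discard the manifestly nonpositive terms, and apply the maximum principle to the resulting inequality $\left(\dt - \gD\right) e^{-u} \leq e^{-u}$. The paper leaves the details implicit, and your substitution $\phi = e^{-t} e^{-u}$ is a clean way of making the exponential factor explicit.
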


\begin{prop} \label{p:fest2} Given a solution to Ricci-Yang-Mills flow  with $\chi(\Sigma) = 0$, we have
\begin{align*}
\sup_{M \times \{t\}} \brs{f}_{h_0} \leq C e^t.
\end{align*}
\begin{proof} Using the a priori estimate of Proposition \ref{p:KodLB} we see by Lemma \ref{l:freduction} that
\begin{align*}
\left(\dt - \gD \right) f =&\ \tr_{\gw} F_{\bar{\mu}} \leq C e^t.
\end{align*}
The upper bound follows by the maximum principle, and the lower bound is similar.
\end{proof}
\end{prop}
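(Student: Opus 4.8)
The plan is to mirror the strategy of Proposition \ref{p:fest} from the $\chi(\Sigma) < 0$ case, adjusting only the growth rate to the present setting. The starting point is Lemma \ref{l:freduction}, which expresses the $\mathfrak{t}^k$-valued potential $f$ as the solution of the inhomogeneous linear heat equation
\begin{align*}
\left( \dt - \gD \right) f = \tr_{\gw} F_{\bar{\mu}}.
\end{align*}
The entire problem thus reduces to controlling the source term $\tr_{\gw} F_{\bar{\mu}}$ and then integrating the equation via the maximum principle.

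To bound the source I would use the normalization fixed at the start of \S \ref{s:estimates}, namely the choice of background connection with $F_{\bar{\mu}} = \gw_{\Sigma} \otimes \zeta$ for a constant $\zeta \in \mathfrak{t}^k$. Since $g_t = e^{u} g_{\Sigma}$ and $\gw_{\Sigma}$ is the area form of $g_{\Sigma}$, the contraction scales as $\tr_{\gw} F_{\bar{\mu}} = e^{-u} \zeta$, so that $\brs{\tr_{\gw} F_{\bar{\mu}}}_{h_0} \leq C e^{-u}$. Invoking Proposition \ref{p:KodLB}, which gives $e^{-u} \leq C e^{t}$ when $\chi(\Sigma) = 0$, yields $\brs{\tr_{\gw} F_{\bar{\mu}}}_{h_0} \leq C e^{t}$.

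Finally, I would apply the scalar maximum principle componentwise against a fixed $h_0$-orthonormal basis of $\mathfrak{t}^k$. Each component $f^I$ satisfies $\left( \dt - \gD \right) f^I \leq C e^{t}$, so integrating the resulting differential inequality for $\sup_M f^I$ gives $\sup_M f^I(t) \leq \sup_M f^I(0) + C(e^{t} - 1) \leq C e^{t}$, and the lower bound follows identically with the sign reversed. Summing over the finitely many components produces $\sup_{M \times \{t\}} \brs{f}_{h_0} \leq C e^{t}$.

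The argument is essentially routine since the $f$-equation is linear; the only point requiring care is tracking the exponential rate of the source term, which is governed entirely by the degeneration of the conformal factor recorded in Proposition \ref{p:KodLB}. There is no real obstacle here---in contrast to the $\chi(\Sigma) < 0$ case, where the boundedness of $e^{-u}$ from Proposition \ref{p:invtraceest} forced only linear growth of $f$, here the unbounded factor $e^{-u}$ propagates directly into the exponential growth of $f$, consistent with the collapsing behavior asserted in case (2) of Theorem \ref{t:RYMthm}.
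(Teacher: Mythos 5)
Your proposal is correct and follows essentially the same route as the paper: reduce to the inhomogeneous heat equation of Lemma \ref{l:freduction}, bound the source $\tr_{\gw} F_{\bar{\mu}} = e^{-u}\zeta$ by $Ce^{t}$ via Proposition \ref{p:KodLB}, and conclude with the maximum principle. The paper states this more tersely, but the content is identical.
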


\begin{lemma} \label{l:KodUBlemma} Given a solution to Ricci-Yang-Mills flow  with $\chi(\Sigma) = 0$, there exists a constant $A > 0$ so that
\begin{align*}
\left(\dt - \gD \right) &\left(A e^{-u} + e^{-t} \brs{\N f}_{g_t,h_0}^2 + e^{-t} |f|_{h_0}^2 \right)\\
\leq&\ - \brs{F_{\mu_f}}^2_{g_t,h_t} - \frac{C^{-1} A}{2} e^{-t} \brs{\N e^{-u}}^2_{g_t} - e^{-t} \brs{\N f}^2_{g_t,h_0} + C e^{-u}.
\end{align*}
\begin{proof} Fix $A > 0$ and let $\Phi = A e^{-u} + e^{-t} \brs{\N f}^2_{g_t,h_0} + e^{-t} |f|_{h_0}^2$.  By combining Lemmas \ref{l:freduction}, \ref{l:invtracelemma}, and \ref{l:gradflemma}, using $R_{\Sigma} = 0$, and the estimates of Propositions \ref{p:KodLB} and \ref{p:fest2} we obtain

\begin{gather} \label{f:Kubprop10}
\begin{split}
\left( \dt - \gD \right) \Phi =&\ A \left\{ - \brs{\N u}^2_{g_t} e^{-u} - \tfrac{1}{2} \brs{F_{\mu_f}}_{g_t,h_t}^2 e^{-u} + e^{-u} \right\}\\
&\ + e^{-t} \left\{- 2 \brs{\N^2 f}^2_{g_t,h_0} - \brs{F_{\mu_f}}_{g_t,h_t}^2 \brs{\N f}^2_{g_t,h_0} + 2 \IP{\N e^{-u} \otimes \tr_{\gw_{\Sigma}} F_{\bar{\mu}}, \N f}_{g_t,h_0} \right\}\\
&\ - 2 e^{-t} \brs{\N f}^2_{g_t,h_0} + 2 e^{-t} e^{-u} \IP{\tr_{\gw_{\Sigma}} F_{\bar{\mu}}, f}_{h_0} - e^{-t} |f|_{h_0}^2\\
\leq&\ - C^{-1} A e^{-t} \brs{\N e^{-u}}^2_{g_t} + C e^{t}\\
&\ - 2 e^{-t} \brs{\N^2 f}^2_{g_t,h_0}  + C e^{-t} \brs{\N e^{-u}}_{g_t} \brs{\N f}_{g_t,h_0} - 2 e^{-t} \brs{\N f}_{g_t,h_0}^2\\
&\ + C e^{-t}  e^{-u} |f|_{h_0} - e^{-t} |f|_{h_0}^2\\
\leq&\ - 2 e^{-t} \brs{\N^2 f}^2_{g_t,h_0} - \frac{C^{-1} A}{2} e^{-t} \brs{\N e^{-u}}^2_{g_t} - e^{-t} \brs{\N f}^2_{g_t,h_0} + C e^{-u}.
\end{split}
\end{gather}
Arguing as in line (\ref{f:ubprop15}) we obtain
\begin{gather*}
\begin{split}
\brs{\N^2 f}^2_{g_t,h_0} \geq&\ \tfrac{1}{2} e^{t} \brs{F_{\mu_f}}_{g_t,h_t}^2 - C e^{-2u}.
\end{split}
\end{gather*}
Using this estimate in (\ref{f:Kubprop10}) and applying Proposition \ref{p:KodLB}, the result follows.
\end{proof}
\end{lemma}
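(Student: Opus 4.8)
The plan is to mirror the structure of Lemma \ref{l:ubprop1} from the $\chi(\Sigma)<0$ case, now specialized to the choices $R_{\Sigma}=0$ and $\gl=1$. I would set $\Phi = A e^{-u} + e^{-t}\brs{\N f}^2_{g_t,h_0} + e^{-t}\brs{f}^2_{h_0}$ and compute $\left(\dt - \gD\right)\Phi$ by assembling three ingredients: the evolution of $e^{-u}$ from Lemma \ref{l:invtracelemma} with $R_{\Sigma}=0$, the evolution of $\brs{\N f}^2_{g_t,h_0}$ from Lemma \ref{l:gradflemma}, and a direct computation for $\brs{f}^2_{h_0}$ using $\dt f = \gD f + \tr_{\gw} F_{\bar{\mu}}$ from Lemma \ref{l:freduction}, namely
\[
\left(\dt - \gD\right)\brs{f}^2_{h_0} = 2\IP{f,\, \tr_{\gw}F_{\bar{\mu}}}_{h_0} - 2\brs{\N f}^2_{g_t,h_0}.
\]
The weight $e^{-t}$ is chosen precisely so that the $+\gl\brs{\N f}^2_{g_t,h_0}$ reaction term produced by the $\gl=1$ normalization cancels against the $-\brs{\N f}^2_{g_t,h_0}$ coming from differentiating $e^{-t}$, leaving a clean expression with only $-2\brs{\N^2 f}^2$, the curvature couplings, and the cross terms surviving.

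The crucial step, and the one that distinguishes this case from $\chi<0$, is converting the good gradient term $-A\brs{\N u}^2_{g_t}e^{-u}$ from Lemma \ref{l:invtracelemma} into a multiple of $e^{-t}\brs{\N e^{-u}}^2_{g_t}$. Using the pointwise identity $\brs{\N u}^2_{g_t}e^{-u} = e^{u}\brs{\N e^{-u}}^2_{g_t}$ together with the a priori bound $e^{-u}\le C e^{t}$ of Proposition \ref{p:KodLB} (equivalently $e^{u}\ge C^{-1}e^{-t}$), I obtain $-A\brs{\N u}^2_{g_t}e^{-u}\le -C^{-1}A e^{-t}\brs{\N e^{-u}}^2_{g_t}$. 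This explains why the degenerating weight $e^{-t}$ must appear on the $\brs{\N e^{-u}}^2$ term in the statement: unlike the $\chi<0$ case, where $e^{-u}$ decays and is bounded, here $e^{-u}$ may grow like $e^{t}$, so the available lower bound on $e^{u}$ is only of order $e^{-t}$.

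Next I would dispose of the cross terms by Cauchy--Schwarz and Young's inequality. The term $2 e^{-t}\IP{\N e^{-u}\otimes\tr_{\gw_{\Sigma}}F_{\bar{\mu}},\N f}_{g_t,h_0}$ is bounded by $C e^{-t}\brs{\N e^{-u}}_{g_t}\brs{\N f}_{g_t,h_0}$, since $\tr_{\gw_{\Sigma}}F_{\bar{\mu}}$ is a fixed background quantity; I would split it against half of the available $C^{-1}A\,e^{-t}\brs{\N e^{-u}}^2_{g_t}$ budget, and choosing $A$ large makes the residual $\brs{\N f}^2$ coefficient smaller than the available $-2e^{-t}\brs{\N f}^2_{g_t,h_0}$, leaving $-e^{-t}\brs{\N f}^2_{g_t,h_0}$ together with $-\tfrac{C^{-1}A}{2}e^{-t}\brs{\N e^{-u}}^2_{g_t}$. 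The linear-in-$f$ term $2e^{-t}e^{-u}\IP{\tr_{\gw_{\Sigma}}F_{\bar{\mu}},f}_{h_0}$ is controlled by $Ce^{-t}e^{-u}\brs{f}_{h_0}$, and invoking $\brs{f}_{h_0}\le Ce^{t}$ from Proposition \ref{p:fest2} collapses it to $Ce^{-u}$, while $-e^{-t}\brs{f}^2_{h_0}$ is simply discarded. Finally, exactly as in line (\ref{f:ubprop15}), I would use $\brs{\N^2 f}^2_{g_t,h_0}\ge \tfrac12 e^{t}\brs{F_{\mu_f}}^2_{g_t,h_t} - Ce^{-2u}$ so that $-2e^{-t}\brs{\N^2 f}^2_{g_t,h_0}\le -\brs{F_{\mu_f}}^2_{g_t,h_t} + Ce^{-t}e^{-2u}$, and one further application of Proposition \ref{p:KodLB} rewrites the error $Ce^{-t}e^{-2u}$ as $Ce^{-u}$; the negative curvature term $-\tfrac{A}{2}\brs{F_{\mu_f}}^2_{g_t,h_t}e^{-u}$ from the $e^{-u}$ evolution is dropped. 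Collecting everything yields precisely the claimed inequality.

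The main obstacle is purely the bookkeeping forced by the degenerating weights: because $\chi(\Sigma)=0$ gives no lower bound on $e^{-u}$ and only the one-sided growth control $e^{-u}\le Ce^{t}$, every error term must be packaged either as $-e^{-t}(\cdots)$ or as $Ce^{-u}$, and the three sources of constants---Young's inequality, the curvature estimate of (\ref{f:ubprop15}), and Proposition \ref{p:KodLB}---must be reconciled so that a single large choice of $A$ simultaneously closes the $\brs{\N e^{-u}}^2$ and $\brs{\N f}^2$ absorptions. Keeping careful track of which factors of $e^{t}$ and $e^{-u}$ survive at each split is the only genuinely delicate point, and is the reason the statement carries the explicit $e^{-t}$ weight on the gradient term that was absent in the $\chi<0$ analogue.
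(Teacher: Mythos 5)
Your proposal is correct and follows essentially the same route as the paper: the same test function $\Phi = A e^{-u} + e^{-t}\brs{\N f}^2_{g_t,h_0} + e^{-t}\brs{f}^2_{h_0}$, the same conversion $-A\brs{\N u}^2_{g_t}e^{-u} = -Ae^{u}\brs{\N e^{-u}}^2_{g_t} \leq -C^{-1}Ae^{-t}\brs{\N e^{-u}}^2_{g_t}$ via Proposition \ref{p:KodLB}, the same Cauchy--Schwarz absorptions with $A$ large, and the same use of the estimate from line (\ref{f:ubprop15}) to extract $-\brs{F_{\mu_f}}^2_{g_t,h_t}$. Your explicit remarks on why the $e^{-t}$ weight must accompany $\brs{\N e^{-u}}^2_{g_t}$ here (in contrast to the $\chi(\Sigma)<0$ case) accurately capture the only point where this lemma differs from Lemma \ref{l:ubprop1}.
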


\begin{prop} \label{p:KodUB} Given a solution to Ricci-Yang-Mills flow  with $\chi(\Sigma) = 0$, one has
\begin{align*} 
\sup_{M \times \{t\}} u \leq&\ C e^t.
\end{align*}
\begin{proof} Choose $A > 0$ so that the conclusion of Lemma \ref{l:KodUBlemma} holds, and let
\begin{align*}
\Phi = u + A e^{-u} + e^{-t} \brs{\N f}^2_{g_t,h_0} + e^{-t} |f|_{h_0}^2.
\end{align*}
Combining Lemmas \ref{l:uevolution} and \ref{l:KodUBlemma} we obtain, dropping some negative terms and applying Proposition \ref{p:KodLB},
\begin{gather*}
\begin{split}
\left(\dt - \gD \right) \Phi \leq&\ \tfrac{1}{2} \brs{F_{\mu_f}}^2_{g_t,h_t} - 1  + \left\{ - \brs{F_{\mu_f}}_{g_t,h_t}^2 + C e^{-u}\right\}\\
\leq&\ C e^{-u} - 1\\
\leq&\ C e^t - 1.
\end{split}
\end{gather*}
Applying the maximum principle gives the result.
\end{proof}
\end{prop}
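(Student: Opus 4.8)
The plan is to bound $\sup_M u$ by a maximum-principle argument applied not to $u$ itself—whose evolution $\left(\dt - \gD\right) u = \tfrac{1}{2}\brs{F_{\mu_f}}^2_{g_t,h_t} - 1$ (Lemma \ref{l:uevolution}, with $R_{\Sigma} = 0$, $\gl = 1$) carries the unfavorable nonnegative curvature term—but to an auxiliary quantity engineered to absorb that term. With $A > 0$ the constant furnished by Lemma \ref{l:KodUBlemma}, I would set
\begin{align*}
\Phi = u + A e^{-u} + e^{-t} \brs{\N f}^2_{g_t,h_0} + e^{-t} \brs{f}_{h_0}^2.
\end{align*}
The three added terms are each nonnegative, so $u \leq \Phi$ pointwise, and it therefore suffices to bound $\sup_M \Phi$.

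First I would combine the evolution of $u$ from Lemma \ref{l:uevolution} with the differential inequality of Lemma \ref{l:KodUBlemma}. The crucial cancellation is that the $+\tfrac{1}{2}\brs{F_{\mu_f}}^2$ coming from $\dt u$ is dominated by the $-\brs{F_{\mu_f}}^2$ produced by Lemma \ref{l:KodUBlemma}; after discarding the remaining nonpositive gradient terms this leaves
\begin{align*}
\left(\dt - \gD\right) \Phi \leq C e^{-u} - 1.
\end{align*}
Next I would feed in the bound $e^{-u} \leq e^{t}\sup_{M \times \{0\}} e^{-u} \leq C e^{t}$ from Proposition \ref{p:KodLB}, giving $\left(\dt - \gD\right)\Phi \leq C e^{t} - 1$. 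Subtracting the time primitive $C(e^{t}-1) - t$ of the right-hand side turns $\Phi$ into a subsolution of the heat equation, so the maximum principle bounds its spatial supremum by its initial value plus $C(e^{t}-1) - t \leq C e^{t}$. Since $u \leq \Phi$, this yields $\sup_M u \leq C e^{t}$.

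The genuine content is not this last maximum-principle step but the construction of the auxiliary quantity, i.e. Lemma \ref{l:KodUBlemma}: the whole argument rests on producing a clean $-\brs{F_{\mu_f}}^2$ term to cancel the curvature growth in $\dt u$. That term is manufactured from the good $-\brs{\N^2 f}^2$ in the evolution of $\brs{\N f}^2$ (Lemma \ref{l:gradflemma}) via the inequality $\brs{\N^2 f}^2 \geq \tfrac{1}{2} e^{t}\brs{F_{\mu_f}}^2_{g_t,h_t} - Ce^{-2u}$, together with the time weight $h_t = e^{-t}h_0$. I expect the main obstacle to be bookkeeping of these exponential weights, so that the various error terms—the Cauchy--Schwarz cross term involving $\brs{\N e^{-u}}\brs{\N f}$ and the $\brs{f}$-terms controlled by Proposition \ref{p:fest2}—are all absorbed into the leading negative quantities. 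Finally, the exponential-in-time bound, rather than a uniform one, is forced here because $R_{\Sigma} = 0$ supplies no restoring $-e^{-2u}$ term of the kind available when $\chi(\Sigma) < 0$, so $e^{-u}$ is only controlled up to the factor $e^{t}$ of Proposition \ref{p:KodLB}.
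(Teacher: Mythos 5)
Your proposal is correct and follows the paper's argument essentially verbatim: the same auxiliary quantity $\Phi = u + A e^{-u} + e^{-t}\brs{\N f}^2_{g_t,h_0} + e^{-t}\brs{f}^2_{h_0}$, the same cancellation of the $+\tfrac{1}{2}\brs{F_{\mu_f}}^2$ term in $\dot u$ against the $-\brs{F_{\mu_f}}^2$ supplied by Lemma \ref{l:KodUBlemma}, and the same use of Proposition \ref{p:KodLB} to reduce to $(\dt - \gD)\Phi \leq Ce^t - 1$ before invoking the maximum principle. Your explicit integration of the time-dependent right-hand side is just a spelled-out version of the paper's final maximum-principle step.
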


\subsection{The case \texorpdfstring{$\chi(\Sigma) > 0$, $c_1(M) = 0$}{}}

Assume $\chi(\Sigma) > 0$, so that by the uniformization theorem we may choose the background metric $g_{\Sigma}$ so that $R_{\Sigma} = 1$.  We furthermore make the assumption that $c_1(M) = 0 \in \mathfrak t^k$, and thus we may choose a background connection $\bar{\mu}$ so that
\begin{align*}
F_{\bar{\mu}} \equiv 0.
\end{align*}
We also set $\gl = 0$, noting then that $h_t \equiv h_0$.  These choices hold throughout this subsection.

\begin{lemma} \label{l:trivS2gradf} Given a solution to Ricci-Yang-Mills flow  with $\chi(\Sigma) > 0$, $c_1 = 0$, one has
\begin{align*}
\left(\dt - \gD \right) \brs{\N f}^2_{g_t,h_0} =&\ -2 \brs{F_{\mu_f}}^2_{g_t,h_0} -2 \brs{(\N^2 f)^{2,0 + 0,2}}^2_{g_t,h_0} - \brs{F_{\mu_f}}_{g_t,h_0}^2 \brs{\N f}^2_{g_t,h_0}
\end{align*}
\begin{proof} In the case $c_1 = 0$, Lemma \ref{l:gradflemma} yields
\begin{align*}
\left(\dt - \gD \right) \brs{\N f}^2_{g_t,h_0} =&\ - 2 \brs{\N^2 f}^2_{g_t,h_0} - \brs{F_{\mu_f}}_{g_t,h_0}^2 \brs{\N f}^2_{g_t,h_0}.
\end{align*}
Since $F_{\bar{\mu}} \equiv 0$, it follows that
\begin{align*}
\brs{\N^2 f}^2_{g_t,h_0} = \brs{(\N^2 f)^{1,1}}^2_{g_t,h_0} + \brs{ (\N^2 f)^{2,0 + 0,2}}^2_{g_t,h_0} = \brs{F_{\mu_f}}^2_{g_t,h_0} + \brs{(\N^2 f)^{2,0 + 0,2}}^2_{g_t,h_0},
\end{align*}
and the result folows.
\end{proof}
\end{lemma}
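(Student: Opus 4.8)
The plan is to specialize the general Bochner-type evolution equation of Lemma \ref{l:gradflemma} to the present geometric setting and then exploit the complex structure on $\Sigma$ to split off the relevant block of the Hessian. The entire content of the statement is a pointwise rewriting of the full Hessian norm, so the only real work is a local linear-algebra identity.

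First I would record which terms survive. In this subsection we have fixed $\gl = 0$ and chosen the background connection so that $F_{\bar{\mu}} \equiv 0$; moreover $h_t \equiv h_0$. Feeding these into Lemma \ref{l:gradflemma} annihilates both the $\gl \brs{\N f}^2_{g_t,h_0}$ term and the cross term $2\IP{\N e^{-u} \otimes \tr_{\gw_{\Sigma}} F_{\bar{\mu}}, \N f}_{g_t,h_0}$, leaving
\[
\left(\dt - \gD\right) \brs{\N f}^2_{g_t,h_0} = -2 \brs{\N^2 f}^2_{g_t,h_0} - \brs{F_{\mu_f}}^2_{g_t,h_0} \brs{\N f}^2_{g_t,h_0}.
\]
It remains only to rewrite $\brs{\N^2 f}^2_{g_t,h_0}$.

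For that rewriting I would use the type decomposition induced by the complex structure on the surface. The symmetric tensor $\N^2 f$ splits orthogonally into its $(1,1)$-part and its $(2,0)+(0,2)$-part, so that pointwise $\brs{\N^2 f}^2 = \brs{(\N^2 f)^{1,1}}^2 + \brs{(\N^2 f)^{2,0 + 0,2}}^2$. The key identification is $\brs{(\N^2 f)^{1,1}}^2 = \brs{F_{\mu_f}}^2$: by Lemma \ref{l:freduction} we have $F_{\mu_f} = F_{\bar{\mu}} + d d^c f$, and since $F_{\bar{\mu}} \equiv 0$ here this reduces to $F_{\mu_f} = d d^c f$, i.e. the complex Hessian $\i \del \delb f$, which is exactly the $(1,1)$-component of $\N^2 f$. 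Substituting $-2\brs{\N^2 f}^2 = -2\brs{F_{\mu_f}}^2 - 2\brs{(\N^2 f)^{2,0 + 0,2}}^2$ into the displayed equation then produces the three stated terms.

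The step requiring genuine care, and the one I would regard as the main obstacle, is precisely the norm identification $\brs{(\N^2 f)^{1,1}}^2_{g_t,h_0} = \brs{F_{\mu_f}}^2_{g_t,h_0}$. One must track the normalization conventions relating $d^c$, $\i\del\delb$, and the Riemannian Hessian, together with the factors by which a two-form and the corresponding symmetric block are measured, so that the constants line up exactly. On a Riemann surface this is a pointwise verification in a local holomorphic (equivalently, orthonormal) frame, and it is where all the bookkeeping lives; once it is in place, every other step is direct substitution.
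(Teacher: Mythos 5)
Your proposal is correct and follows the same route as the paper: specialize Lemma \ref{l:gradflemma} using $\gl = 0$ and $F_{\bar{\mu}} \equiv 0$ to drop the scaling and cross terms, then split $\brs{\N^2 f}^2$ orthogonally into its $(1,1)$ and $(2,0)+(0,2)$ parts and identify the $(1,1)$-block with $F_{\mu_f} = d d^c f$. Your added remark about verifying the normalization constants in the identification $\brs{(\N^2 f)^{1,1}}^2 = \brs{F_{\mu_f}}^2$ is a fair point of care, but the argument is the same as the paper's.
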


\begin{prop} \label{p:trivS2gradf} Given a solution to Ricci-Yang-Mills flow  with $\chi(\Sigma) > 0$, $c_1 = 0$, one has
\begin{gather}
\sup_{M \times \{t\}} \brs{\N f}^2_{g_t,h_0} \leq \sup_{M \times \{0\}} \brs{\N f}^2_{g_0,h_0}.
\end{gather}
\begin{proof} Lemma \ref{l:gradflemma} yields
\begin{align*}
\left(\dt - \gD \right) \brs{\N f}^2_{g_t,h_0} \leq&\ 0,
\end{align*}
and the result follows from the maximum principle.
\end{proof}
\end{prop}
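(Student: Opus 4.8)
The plan is to produce a pointwise differential inequality of the form $\left(\dt - \gD\right)\brs{\N f}^2_{g_t,h_0}\le 0$ and then appeal to the parabolic maximum principle, which immediately yields that the spatial supremum is nonincreasing in $t$.

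The evolution equation I would use is the one supplied by Lemma \ref{l:trivS2gradf}, which specializes the general Lemma \ref{l:gradflemma} to the present setting and reads
\begin{align*}
\left(\dt - \gD\right)\brs{\N f}^2_{g_t,h_0} = -2\brs{F_{\mu_f}}^2_{g_t,h_0} - 2\brs{(\N^2 f)^{2,0+0,2}}^2_{g_t,h_0} - \brs{F_{\mu_f}}^2_{g_t,h_0}\brs{\N f}^2_{g_t,h_0}.
\end{align*}
The crucial observation is that every term on the right-hand side is manifestly nonpositive; in particular there is no zeroth-order reaction term of indefinite sign. This is exactly where the two standing hypotheses of this subsection are used: setting $\gl = 0$ removes the reaction term $\gl\brs{\N f}^2_{g_t,h_0}$ present in Lemma \ref{l:gradflemma}, while $c_1(M) = 0$ permits the Hodge-theoretic choice of a flat background connection $F_{\bar\mu}\equiv 0$, which both eliminates the cross term $2\IP{\N e^{-u}\otimes\tr_{\gw_\Sigma}F_{\bar\mu},\N f}_{g_t,h_0}$ and allows the Hessian to be decomposed as $\brs{\N^2 f}^2_{g_t,h_0} = \brs{F_{\mu_f}}^2_{g_t,h_0} + \brs{(\N^2 f)^{2,0+0,2}}^2_{g_t,h_0}$.

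Granting this inequality, the maximum principle finishes the argument at once: at a spatial maximum of $\brs{\N f}^2_{g_t,h_0}$ the Laplacian contribution is nonpositive, so the time derivative of the maximum is nonpositive, and hence $\sup_M\brs{\N f}^2_{g_t,h_0}$ cannot increase. I would note here that since $\gl = 0$ forces $h_t\equiv h_0$, the fixed reference inner product $h_0$ used to measure $\brs{\N f}^2$ is the same at every time, so no rescaling correction of the type needed in the $\chi(\Sigma)\le 0$ cases enters.

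Because the sign-definite evolution equation has already been established, I do not expect a genuine obstacle: the entire content of the proposition is packaged in the preceding evolution computation, and the maximum principle is then applied in its most elementary form. The only point requiring care is the verification that the cross term truly vanishes, which is precisely the step that invokes the topological hypothesis $c_1(M) = 0$; without it, that term has indefinite sign and the monotonicity would generally fail.
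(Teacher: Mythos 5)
Your proof is correct and follows essentially the same route as the paper: the paper's own proof also reduces to the sign-definite evolution inequality from Lemma \ref{l:gradflemma} (of which Lemma \ref{l:trivS2gradf} is the specialization you quote) and concludes by the maximum principle. Your additional remarks pinpointing exactly where $\gl = 0$ and $c_1(M) = 0$ are used are accurate and consistent with the standing hypotheses of that subsection.
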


\subsection{The case \texorpdfstring{$\chi(\Sigma) > 0$, $c_1(M) \neq 0$}{}}

\begin{prop} \label{p:S2volume} Given a solution to Ricci-Yang-Mills flow as above with $\chi(\Sigma) > 0$, $c_1 \neq 0$, there exists a constant $C > 0$ so that
\begin{align*}
C^{-1} \leq \Vol(g_t) \leq C.
\end{align*}
\begin{proof} To establish the lower bound we use the fact that the bundle is nontrivial.  Fix $X \in \mathfrak t^k$ such that $\int_{\Sigma} \IP{F_{\mu}, X} \neq 0$.  Then we observe that there is a uniform constant $\gl > 0$ such that
\begin{align*}
\gl = \brs{\int_{\Sigma} \IP{F_{\mu},X}_h} \leq \int_{\Sigma} \brs{F_{\mu}}_{g,h} dV_g \leq \brs{\brs{ \brs{F_{\mu_f}}_{g,h}}}_{L^2} \Vol(g_t)^{\tfrac{1}{2}}.
\end{align*}
It follows that
\begin{align*}
\frac{d}{dt} \Vol(g_t) \geq&\ - 4 \pi + \frac{\gl^2}{2 \Vol(g_t)},
\end{align*}
and the lower bound follows.  Using the upper bound on the Liouville energy from Proposition \ref{p:Liouvillemon}, we obtain an upper bound on the volume from the upper bound on Liouville energy and the sharp form of the Moser-Trudinger inequality on $S^2$ \cite{AubinBest, MoserSharp, Trudinger}.
\end{proof}
\end{prop}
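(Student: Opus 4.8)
The plan is to prove the two inequalities by separate mechanisms. The upper bound is a soft consequence of the monotonicity of the Liouville energy $\FF$ combined with a sharp conformal Sobolev inequality, whereas the lower bound is where the topological hypothesis $c_1(M) \neq 0$ must be used in an essential way. I expect the lower bound to be the main point: one has to convert the nonvanishing of the Chern class into a quantitative ``restoring force'' preventing the area from degenerating. As in the preceding subsection with $\chi(\Sigma) > 0$, we take $R_\Sigma = 1$ and $\gl = 0$, so that $h_t \equiv h_0$.

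For the lower bound I would first compute the evolution of the total area. Since $g_t = e^{u_t} g_\Sigma$ on the surface $\Sigma$, we have $dV_{g_t} = e^{u_t} dV_\Sigma$, and Lemma \ref{l:uevolution} (with $\gl = 0$) gives $\dt dV_{g_t} = (\dt u)\, dV_{g_t} = \left( - R + \tfrac12 \brs{F_\mu}^2_{g,h} \right) dV_{g_t}$. Integrating over $\Sigma$ and using that $\int_\Sigma R\, dV_{g_t}$ is a fixed positive constant $c_0$ by Gauss--Bonnet, we obtain
\[ \frac{d}{dt} \Vol(g_t) = - c_0 + \tfrac12 \int_\Sigma \brs{F_\mu}^2_{g,h}\, dV_{g_t}. \]
To bound the curvature integral from below I would use nontriviality of the bundle. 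Since $c_1(M) \neq 0$, one can fix a unit vector $X \in \mathfrak t^k$ for which the Chern--Weil integral $\int_\Sigma \IP{F_\mu, X}_h$ is a nonzero constant $\gd$, independent of $t$ because it computes a fixed cohomology class. Cauchy--Schwarz then yields $\gd \leq \int_\Sigma \brs{F_\mu}_{g,h}\, dV_{g_t} \leq \nm{\brs{F_\mu}_{g,h}}{L^2} \Vol(g_t)^{1/2}$, so that $\int_\Sigma \brs{F_\mu}^2_{g,h}\, dV_{g_t} \geq \gd^2 / \Vol(g_t)$ and hence
\[ \frac{d}{dt} \Vol(g_t) \geq - c_0 + \frac{\gd^2}{2 \Vol(g_t)}. \]
The right side is strictly positive whenever $\Vol(g_t) < \gd^2/(2 c_0)$, so a standard barrier argument gives $\Vol(g_t) \geq \min\left( \Vol(g_0), \gd^2/(2 c_0) \right) > 0$.

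For the upper bound I would exploit that $\FF$ is monotone nonincreasing: by Proposition \ref{p:Liouvillemon}, with $\gl = 0$ every term on the right-hand side is nonpositive, so $\FF(u_t, f_t, h_t) \leq \FF(u_0, f_0, h_0) =: C$. Discarding the nonnegative term $\int_\Sigma e^{-u}\brs{F_{\mu_f}}^2_{g_\Sigma, h}\, dV_\Sigma$ and using $R_\Sigma = 1$, this gives the a priori bound $\tfrac12 \int_\Sigma \brs{du}^2\, dV_\Sigma + \int_\Sigma u\, dV_\Sigma \leq C$. I would then invoke the sharp Moser--Trudinger (Onofri) inequality on $S^2$, which controls $\log \Vol(g_t) = \log \int_\Sigma e^u\, dV_\Sigma$ from above by exactly this combination of the Dirichlet energy and the mean of $u$. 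The crucial feature is that the sharp constant makes the coefficient of $\int_\Sigma \brs{du}^2$ no larger than the factor $\tfrac12$ appearing in $\FF$, so that the energy bound feeds directly into the inequality and produces $\Vol(g_t) \leq C$.
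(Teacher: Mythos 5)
Your proposal is correct and follows essentially the same route as the paper: the lower bound comes from the volume ODE combined with Cauchy--Schwarz applied to the fixed, nonzero Chern--Weil integral $\int_\Sigma \IP{F_\mu, X}_h$, and the upper bound comes from the monotone Liouville energy fed into the sharp Moser--Trudinger inequality. You simply spell out a few steps the paper leaves implicit (the barrier argument and the exact cancellation of the Dirichlet-energy coefficients).
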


\section{Global existence and convergence} \label{s:convergence}
\subsection{The case \texorpdfstring{$\chi(\Sigma) < 0$}{}}

\begin{prop} \label{p:RYMconvergence} Let $T^k \to M \to \Sigma$ denote a principal torus bundle over a compact Riemann surface with $\chi(\Sigma) < 0$.  Given $G_0 = \pi^*g_0 + \tr_{h_0} \mu_0 \otimes \mu_0$ an invariant metric on $M$, the solution to (\ref{f:nRYM}) with $\gl = 1$ and initial condition $(g_0,\mu_0,h_0)$ exists on $[0,\infty)$, and further satisfies
\begin{enumerate}
\item $\lim_{t \to \infty} g_t = g_{\Sigma}$, the unique conformal metric on $\Sigma$ of constant curvature $-1$.
\item $\lim_{t \to \infty} F_{\mu_t} = F_{\bar{\mu}} = \gw_{\Sigma} \otimes \zeta$ for some $\zeta \in \mathfrak{t}^k$.
\item $(M, G_t)$ converges in the Gromov-Hausdorff topology to $(\Sigma, g_{\Sigma})$.
\end{enumerate}
\begin{proof} By combining the estimates of Propositions \ref{p:invtraceest}, \ref{p:ubprop2}, and then applying Proposition \ref{p:regularity}, we conclude long time existence of the flow.  Note that the upper and lower estimates on $u$ are uniform in time.

To address the convergence we use the energy monotonicity.  Since $\gl = 1$ it follows from Proposition \ref{p:Liouvillemon}  that $\FF$ is monotonically decreasing.  As the functional $\FF$ is bounded below, it follows that there exists a sequence $\{t_i\} \to \infty$ such that
\begin{align*}
\lim_{i \to \infty} \frac{d}{dt} \FF(u_{t_i}, f_{t_i}) = 0.
\end{align*}
Choose a small $\ge > 0$ and fix a time $t_i$ such that $0 \geq \frac{d}{dt} \FF(u_{t_i},f_{t_i}) \geq - \ge$.  From Proposition \ref{p:Liouvillemon} it follows that for such times $t_i$ one has
\begin{align*}
\int_{\Sigma} e^u \dot{u}^2 dV_{\Sigma} + \int_{\Sigma} e^{-u} \brs{F_{\mu_f}}^2_{g_{\Sigma},h} dV_{\Sigma} + 2 \int_{\Sigma} e^{-2u} \IP{\N^g F_{\mu_f}, \N^g F_{\mu_f}}_{g_{\Sigma},h}dV_{\Sigma} \leq \ge.
\end{align*}
Using the uniform bound for $u$ from Propositions \ref{p:invtraceest}, \ref{p:ubprop2}, it follows that
\begin{align} \label{f:PEconv10}
\int_{\Sigma} \dot{u}^2 dV_{\Sigma} + \int_{\Sigma} \brs{F_{\mu_f}}^2_{g,h} dV_{\Sigma} + 2 \int_{\Sigma} \brs{\N^g F_{\mu_f}}_{g,h}^2 dV_{\Sigma} \leq C \ge.
\end{align}
We next show that the $L^2$ smallness of $\dot{u}$ can be split into the scalar curvature and bundle curvature pieces to give smallness of the $H^2$ norm of $u$.  To that end we estimate using the Sobolev inequality,
\begin{align*}
\int_{\Sigma} \brs{F_{\mu_f}}_{g,h}^4 dV_{\Sigma} =&\ \brs{\brs{ \brs{F_{\mu_f}}_{g,h}}}_{L^4}^4 \leq C \brs{\brs{\brs{F_{\mu_f}}_{g,h}}}_{L^2}^2 \brs{\brs{\brs{F_{\mu_f}}_{g,h}}}_{H^1}^2 \leq C \ge^2.
\end{align*}
It follows from the $L^2$ estimate for $\dot{u}$ above that
\begin{align} \label{f:PEconv20}
\int_{\Sigma} \left( e^{-u} \left(\gD_{\Sigma} u + 1 \right) - 1 \right)^2 dV_{\Sigma} \leq C \ge.
\end{align}
As $u$ is also uniformly bounded, we conclude a uniform $H^2$ estimate for $u$ at these times, and thus a uniform $C^{\ga}$ estimate by Sobolev embedding.  Applying Proposition \ref{p:regularity} we obtain a uniform $H^2$ bound for $u$ on a time interval of a definite length, and thus a uniform $C^{\ga}$ bound as well. Since times $t_i$ satisfying $-\ge \leq \frac{d}{dt} \FF(u_{t_i}, f_{t_i})$ are arbitrarily close to any sufficiently large time, it follows that there is a uniform $C^{\ga}$ bound for $u$ for all times $t \geq 0$.  Returning to (\ref{f:PEconv10}), we conclude an $H^2$ estimate for $f - \bar{f}$ at time $t_i$, and thus a $C^{\ga}$ estimate as well.  It now follows from parabolic Schauder estimates that $f - \bar{f}$ has a uniform $C^{2,\ga}$ estimate.  The equation for $u$ is now uniformly parabolic with a $C^{\ga}$ inhomogeneous term, thus we obtain a uniform $C^{2,\ga}$ estimate for $u$ on $[0,T)$.  Alternating between the equations for $f$ and $u$ we continue bootstrapping to get $C^{\infty}$ estimates for $u$ and $f - \bar{f}$ on $[t_i,t_i+1]$.  It follows that there are uniform $C^{\infty}$ estimates for $u$ and $f- \bar{f}$.

From the discussion above, specifically (\ref{f:PEconv20}), it follows that the metrics $g_t$ converge to a conformal metric of constant curvature $-1$, which must be $g_{\Sigma}$, and in particular $u$ converges to zero.  With this in place it follows from (\ref{f:fred}) that $f - \bar{f}$ converges to zero, so that $F_{\mu_f} \to F_{\bar{\mu}}$, as clamed.  As $h_t = e^{-t} h_0$, it follows immediately that $(M, G_t = \pi^* g_t + h_t \mu_{t} \otimes \mu_t)$ converges to $(\Sigma, g_{\Sigma})$ as claimed.
\end{proof}
\end{prop}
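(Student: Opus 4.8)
The plan is to prove long-time existence first, using the a priori estimates already assembled for $\chi(\Sigma) < 0$, and then to extract convergence from the monotonicity of the Liouville energy $\FF$. For existence, I would invoke the two-sided bound on the conformal factor: Proposition \ref{p:invtraceest} yields a uniform lower bound on $u$ (as $e^{-u} - 1$ decays exponentially), while Proposition \ref{p:ubprop2} yields a uniform upper bound. Hence $g_t$ is uniformly equivalent to $g_{\Sigma}$, so the Sobolev constant $C_S(g_t)$, the volume $\Vol(g_t)$, and the Dirichlet energy $\brs{\brs{d u}}_{L^2}$ are all uniformly controlled. Since $\gl = 1$, Proposition \ref{p:Liouvillemon} shows $\FF$ is monotonically nonincreasing and thus bounded above. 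All four hypotheses of Proposition \ref{p:regularity} are therefore met, so the flow admits no finite-time singularity and exists on $[0,\infty)$.

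For convergence, I would use that $\FF$ is monotone and bounded below, so $\tfrac{d}{dt}\FF \to 0$ along a sequence $t_i \to \infty$; because the total decrease is finite, such near-critical times in fact occur in every unit interval $[T, T+1]$ for $T$ large. At such a time the dissipation identity of Proposition \ref{p:Liouvillemon} forces the three integrals $\int_{\Sigma} e^u \dot{u}^2$, $\int_{\Sigma} e^{-u}\brs{F_{\mu_f}}^2_{g_{\Sigma},h}$, and $\int_{\Sigma} e^{-2u}\IP{\N^g F_{\mu_f}, \N^g F_{\mu_f}}_{g_{\Sigma},h}$ to all be small, and the uniform bounds on $u$ then convert these into the normalized statement that $\brs{\brs{\dot{u}}}_{L^2}$, $\brs{\brs{\brs{F_{\mu_f}}_{g,h}}}_{L^2}$, and $\brs{\brs{\N^g F_{\mu_f}}}_{L^2}$ are all of order $\ge$.

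The crux is upgrading this sequential $L^2$-smallness to uniform $C^\infty$ control. Here I would first apply the Sobolev inequality to promote the $L^2$ and $H^1$ control of $\brs{F_{\mu_f}}_{g,h}$ into $L^4$-smallness of $\brs{F_{\mu_f}}_{g,h}^2$; subtracting this curvature contribution from the $L^2$-smallness of $\dot{u}$ isolates the scalar-curvature piece and produces an $L^2$ bound on $e^{-u}(\gD_{\Sigma} u + 1) - 1$. Since $u$ is uniformly bounded, this is a uniformly elliptic estimate yielding a uniform $H^2$, hence $C^\ga$, bound on $u$ at these times. Feeding this into the $f$-equation (\ref{f:fred}) and alternating parabolic Schauder estimates between the $u$- and $f$-equations bootstraps to uniform $C^\infty$ estimates on $[t_i, t_i + 1]$, and hence on all of $[0,\infty)$ since such intervals are cofinal. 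I expect this bootstrap, and in particular the clean separation of the scalar and bundle curvature contributions inside $\dot{u}$, to be the genuine obstacle.

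Finally, the $L^2$ control of $e^{-u}(\gD_{\Sigma} u + 1) - 1$ forces the limiting metric to have constant curvature $-1$, which by uniqueness of the uniformizing metric is $g_{\Sigma}$, so $u \to 0$ and item (1) holds. The equation (\ref{f:fred}) then drives $f_t$ to a constant, giving $F_{\mu_f} \to F_{\bar{\mu}} = \gw_{\Sigma} \otimes \zeta$ and item (2). Item (3) follows since the fiber metric $h_t = e^{-t} h_0$ collapses, so that $(M, G_t)$ converges in the Gromov-Hausdorff topology to the base $(\Sigma, g_{\Sigma})$.
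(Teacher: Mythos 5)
Your proposal is correct and follows essentially the same route as the paper: two-sided control of $u$ from Propositions \ref{p:invtraceest} and \ref{p:ubprop2} feeding into Proposition \ref{p:regularity} for long-time existence, then monotonicity of $\FF$ producing near-critical times in every unit interval, the Sobolev upgrade of $\brs{F_{\mu_f}}_{g,h}$ to $L^4$ to isolate the scalar-curvature part of $\dot{u}$ and obtain the $H^2$ bound on $u$, followed by the alternating parabolic bootstrap and the collapse of $h_t = e^{-t}h_0$ for the Gromov--Hausdorff statement. No substantive differences from the paper's argument.
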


\subsection{The case \texorpdfstring{$\chi(\Sigma) = 0$}{}}

This case requires a further technical lemma to obtain the limiting behavior, namely a bound on the Sobolev constant of the conformal metric.  This lemma is also employed in obtaining estimates in the case $\Sigma = S^2$.

\begin{lemma} \label{l:confSobest} Suppose $(\Sigma, g_{\Sigma})$ is a compact Riemann surface.  Fix $u \in C^{\infty}(\Sigma)$ such that
\begin{align*}
\int_{\Sigma} u dV_{\Sigma} = 0, \qquad \brs{\brs{d u}}_{L^2} \leq A.
\end{align*}
There exists a constant $C = C(A)$ such that $C_S(e^u g_{\Sigma}) \leq C$.
\begin{proof} Using the $L^2$ gradient bound for $u$, it follows from the Moser-Trudinger inequality that for any $p > 1$ one has a constant $C$ such that
\begin{align*}
\int_{\Sigma} e^{p \brs{u}} dV_{\Sigma} \leq C.
\end{align*}
We note that $W^{1,2}$ embeds into any $L^p$ space, so we choose the constant $C_S(g_{\Sigma})$ such that
\begin{align*}
\left( \int_{\Sigma} f^8 dV_{\Sigma} \right)^{\tfrac{1}{4}} \leq C_S(g_{\Sigma}) \left( \int_{\Sigma} \brs{d f}^2_{g_{\Sigma}} dV_{\Sigma} + \int_{\Sigma} f^2 dV_{\Sigma} \right).
\end{align*}
Using the above estimates we will bound the Sobolev constant for the metric $g = e^u g_{\Sigma}$, for the embedding $W^{1,2} \to L^4$. The argument modifies in on obvious way to estimate other Sobolev constants.  To begin we have
\begin{align*}
\left(\int_{\Sigma} f^4 dV_{g} \right)^{\frac{1}{2}} =&\ \left( \int_{\Sigma} f^4 e^u dV_{\Sigma} \right)^{\frac{1}{2}}\\
\leq&\ \left( \int_{\Sigma} f^8 dV_{\Sigma} \right)^{\frac{1}{4}} \left( \int_{\Sigma} e^{2u} dV_{\Sigma} \right)^{\frac{1}{4}}\\
\leq&\ C \left( \int_{\Sigma} f^8 dV_{\Sigma} \right)^{\frac{1}{4}}.
\end{align*}
Using the Sobolev inequality for $g_{\Sigma}$ and the conformal invariance of the Dirichlet energy we obtain
\begin{align*}
\left(\int_{\Sigma} f^4 dV_{g} \right)^{\frac{1}{2}} + \left( \int_{\Sigma} f^8 dV_{\Sigma} \right)^{\frac{1}{4}} \leq&\ C  \int_{\Sigma} \left( \brs{\N f}^2_{g_{\Sigma}} + f^2 \right) dV_{\Sigma}\\
=&\ C \int_{\Sigma} \brs{\N f}^2_{g} dV_g + C \int_{\Sigma} f^2 dV_{\Sigma}.
\end{align*}
We furthermore choose $N > 0$ and observe that
\begin{align*}
\int_{e^{-u} \geq N} dV_{\Sigma} \leq \frac{1}{N} \int_{\Sigma} e^{\brs{u}} dV_{\Sigma} \leq \frac{C}{N}.
\end{align*}
Using this we estimate
\begin{align*}
\int_{\Sigma} f^2 dV_{\Sigma} =&\ \int_{e^{-u} \leq N} f^2 dV_{\Sigma} + \int_{e^{-u} \geq N} f^2 dV_{\Sigma}\\
\leq&\ N \int_{\Sigma} f^2 dV_g + \left( \int_{\Sigma} f^8 \right)^{\frac{1}{4}} \left( \int_{e^{-u} \geq N} dV_{\Sigma} \right)^{\frac{3}{4}}\\
\leq&\ N \int_{\Sigma} f^2 dV_g + C N^{-\frac{3}{4}} \left( \int_{\Sigma} f^8 dV_{\Sigma} \right)^{\frac{1}{4}}.
\end{align*}
Choosing $N$ sufficiently large and using this above gives the claim.
\end{proof}
\end{lemma}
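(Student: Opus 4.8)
The plan is to bound the Sobolev constant of $g = e^u g_{\Sigma}$ for the embedding $W^{1,2} \to L^4$ by transferring everything to the fixed background metric $g_{\Sigma}$, where the Sobolev inequality holds with a controlled constant, and then transferring back using the conformal invariance of the Dirichlet energy in dimension two. The one input that makes this work is exponential integrability: since $\int_{\Sigma} u \, dV_{\Sigma} = 0$ and $\brs{\brs{du}}_{L^2} \leq A$, the Moser-Trudinger inequality yields $\int_{\Sigma} e^{p\brs{u}} dV_{\Sigma} \leq C(A,p)$ for every $p > 1$. In particular both $e^u$ and $e^{-u}$ are uniformly bounded in every $L^p(g_{\Sigma})$, and these are the only properties of $u$ I will use.

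First I would pass from the $L^4(g)$ norm to an $L^8(g_{\Sigma})$ norm. Writing $dV_g = e^u dV_{\Sigma}$ and applying Cauchy-Schwarz,
\[
\left( \int_{\Sigma} f^4 dV_g \right)^{1/2} = \left( \int_{\Sigma} f^4 e^u dV_{\Sigma} \right)^{1/2} \leq \left( \int_{\Sigma} f^8 dV_{\Sigma} \right)^{1/4} \left( \int_{\Sigma} e^{2u} dV_{\Sigma} \right)^{1/4} \leq C \left( \int_{\Sigma} f^8 dV_{\Sigma} \right)^{1/4},
\]
where the last factor is controlled by Moser-Trudinger. Next I would apply the fixed Sobolev inequality $W^{1,2}(g_{\Sigma}) \to L^8(g_{\Sigma})$ and invoke conformal invariance $\int_{\Sigma} \brs{df}^2_{g_{\Sigma}} dV_{\Sigma} = \int_{\Sigma} \brs{df}^2_g dV_g$ to rewrite the gradient term intrinsically with respect to $g$. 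This reduces the whole estimate to the single remaining issue of controlling the lower-order term $\int_{\Sigma} f^2 dV_{\Sigma}$ by $\int_{\Sigma} f^2 dV_g$.

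The hard part is precisely this last reduction, because the conformal factor $e^{-u}$ relating $dV_{\Sigma}$ to $dV_g$ can be large on the set where $u$ is very negative, so a naive pointwise comparison of measures fails. I would handle it by splitting $\Sigma$ at a threshold $N$: on $\{e^{-u} \leq N\}$ one has $dV_{\Sigma} \leq N \, dV_g$ directly, while on $\{e^{-u} > N\}$ I would apply H\"older against the higher $L^8(g_{\Sigma})$ norm together with the Chebyshev bound $\brs{\{e^{-u} > N\}}_{g_{\Sigma}} \leq N^{-1}\int_{\Sigma} e^{-u} dV_{\Sigma} \leq C N^{-1}$, which is where exponential integrability enters a second, essential time. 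This produces a term of the form $C N^{-3/4} \left( \int_{\Sigma} f^8 dV_{\Sigma} \right)^{1/4}$.

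Finally I would combine the pieces and close the estimate by absorption: the chain yields $\left( \int_{\Sigma} f^8 dV_{\Sigma} \right)^{1/4}$ bounded by $C\left( \int_{\Sigma} \brs{df}^2_g dV_g + N \int_{\Sigma} f^2 dV_g \right) + C N^{-3/4}\left( \int_{\Sigma} f^8 dV_{\Sigma} \right)^{1/4}$, and choosing $N = N(A)$ large enough makes the coefficient of the last term less than $\tfrac{1}{2}$, so it can be absorbed into the left-hand side. Tracing back through the first step then gives $C_S(e^u g_{\Sigma}) \leq C(A)$ for the $W^{1,2} \to L^4$ embedding, and the same argument with different H\"older exponents controls the remaining Sobolev constants. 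I expect the absorption step to be the delicate point, since it is what forces the quantitative use of the measure decay rather than a mere pointwise bound.
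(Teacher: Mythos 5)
Your proposal is correct and follows essentially the same route as the paper's proof: Moser--Trudinger for exponential integrability of $\pm u$, Cauchy--Schwarz to pass from $L^4(g)$ to $L^8(g_\Sigma)$, the fixed Sobolev embedding plus conformal invariance of the Dirichlet energy, and the threshold/Chebyshev splitting of $\int f^2\, dV_\Sigma$ followed by absorption for large $N$. The only difference is that you spell out the absorption step explicitly, which the paper leaves implicit.
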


\begin{prop} \label{p:RYMconvergenceg1} Let $T^k \to M \to \Sigma$ denote a principal torus bundle over a compact Riemann surface with $\chi(\Sigma) = 0$.  
Given $G_0 = \pi^*g_0 + \tr_{h_0} \mu_0 \otimes \mu_0$ an invariant metric on $M$, the solution to (\ref{f:nRYM}) with $\gl = 1$ and initial condition $(g_0,\mu_0,h_0)$ exists on $[0,\infty)$ and $(M, G_t)$ converges in the Gromov-Hausdorff topology to a point.
\begin{proof} By combining the estimates of Propositions \ref{p:KodLB}, \ref{p:KodUB}, and then applying Proposition \ref{p:regularity}, we conclude long time existence of the flow.  In this case we cannot conclude $L^{\infty}$ estimates for $u$ which are uniform in time.

To obtain the convergence, we first observe that by Proposition \ref{p:Liouvillemon}, since $\gl = 1$, there is an a priori upper bound on $\FF$, which in this case immediately implies a uniform upper bound for $\brs{\brs{\N u}}_{L^2}$.  Furthermore, using that $\FF$ is bounded below since $R_{\Sigma} = 0$ we choose a sequence of times $\{t_i\} \to \infty$ such that
\begin{align*}
\lim_{i \to \infty} \frac{d}{dt} \FF(u_{t_i}, f_{t_i}) = 0.
\end{align*}
Thus we may fix $\ge > 0$ arbitrary and then for sufficiently large $i$ it follows from Proposition \ref{p:Liouvillemon} that for $t_i$ one has
\begin{align} \label{f:g110}
\int_{\Sigma} \left[ e^u \dot{u}^2 + e^{-u} \brs{F_{\mu_f}}^2_{g_{\Sigma},h} + 2 e^{-2u} \IP{\N^g F_{\mu_f}, \N^g F_{\mu_f}}_{g_{\Sigma},h} \right] dV_{\Sigma} \leq \ge.
\end{align}
Note that we can decompose the first term to yield
\begin{align*}
\ge \geq&\ \int_{\Sigma} e^u \left( - R_g + \tfrac{1}{2} \brs{F_{\mu}}^2_{g,h} - 1 \right)^2 dV_{\Sigma}\\
=&\ \int_{\Sigma} e^u \left( - R_g + \tfrac{1}{2} \brs{F_{\mu}}^2_{g,h} \right)^2 dV_{\Sigma} + \int_{\Sigma} \left( R_g - \tfrac{1}{2} \brs{F_{\mu}}^2_{g,h} \right) e^u dV_{\Sigma} + \int_{\Sigma} e^u dV_{\Sigma}\\
\geq&\ \int_{\Sigma} e^u \left( - R_g + \tfrac{1}{2} \brs{F_{\mu}}^2_{g,h} \right)^2 dV_{\Sigma} - 2 \ge + \int_{\Sigma} e^u dV_{\Sigma}.
\end{align*}
Thus
\begin{align} \label{f:g120}
\int_{\Sigma} e^u dV_{\Sigma} + \int_{\Sigma} e^u \left( e^{-u} \gD_{\Sigma} u + \tfrac{1}{2} \brs{F_{\mu}}^2_{g,h} \right)^2 dV_{\Sigma}. \leq \ge.
\end{align}
Now let $w = u_{t_i} - \int_{\Sigma} u_{t_i} dV_{\Sigma}$.  Using Lemma \ref{l:confSobest} we conclude that the $W^{1,2} \to L^4$ Sobolev constant of $g = e^{w} g_{\Sigma}$ is bounded.  It follows from a standard iteration argument that there is a uniform lower bound on the volume of balls of the form $\Vol(B_r(p)) \geq c r^4$.  Since the volume is uniformly bounded above it follows from this that the diameter of the metric $e^{w} g_{\Sigma}$ is bounded.  Since
\begin{align*}
\int_{\Sigma} u_{t_i} dV_{\Sigma} \leq \log \left(\int_{\Sigma} e^{u_{t_i}} dV_{\Sigma} \right) \leq \log \ge,
\end{align*}
it follows easily that the diameter of $e^{u_{t_i}} g$ is approaching zero as $i \to \infty$.

We note using the monotonicity and lower bound for $\FF$ that for any $\ge > 0$ there exists a time $T > 0$ such that for all $t \geq T$, there exists $\til{t} \in [t, t + 1]$ such that $\frac{d}{dt} \FF (u_{\til{t}}, f_{\til{t}}) \leq \ge$.  The argument above applies at time $\til{t}$ to show that $\diam (g_{\til{t}}) = o(\ge)$.  We further claim that for all $t \geq T$, $\diam(g_t) = o(\ge)$.  To see this, fix a time $t \geq T$ and choose $\til{t} \in [t, t+1]$ as above.  Since $H^1(\Sigma) \neq 0$, we apply the diameter lower bound (with normalization $\gl = 1$) for Ricci-Yang-Mills flow (\cite{StreetsRGflow, Knopfdiam}) to see that there is a uniform constant $c > 0$ so that $\diam(g_t) \leq c \diam (g_{\til{t}})$.  The claim follows, and we conclude $\lim_{t \to \infty} \diam(g_t) = 0$.  Since $h_t = e^{-t} h_0$, it follows that $\lim_{t \to \infty} \diam (G_t) = 0$, and so $(M, G_t)$ converges in the Gromov-Hausdorff topology to a point.
\end{proof}
\end{prop}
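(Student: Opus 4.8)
The plan is to treat existence and collapse separately, since at the borderline case $\chi(\Sigma)=0$ there is no uniform-in-time control of the conformal factor. For existence I would exploit that the choice $\gl=1$ makes every term in the monotonicity formula of Proposition \ref{p:Liouvillemon} nonpositive, so $\FF$ is nonincreasing, while $R_\Sigma=0$ makes all three integrands defining $\FF$ nonnegative, so $\FF$ is bounded below; together these give $\FF(u_t,f_t,h_t)\le\FF(u_0,f_0,h_0)$ and hence a uniform bound on $\brs{\brs{d u}}_{L^2}$ and on $\Vol(g_t)=\int_\Sigma e^u\,dV_\Sigma$ from above. On any finite interval $[0,T)$ the one-sided exponential estimates $\sup_{M\times\{t\}}e^{-u}\le e^t\sup_{M\times\{0\}}e^{-u}$ (Proposition \ref{p:KodLB}) and $\sup_{M\times\{t\}}u\le Ce^t$ (Proposition \ref{p:KodUB}) become genuine two-sided bounds on $u$, which in turn supply the lower volume bound and the bounded Sobolev constant required in Proposition \ref{p:regularity}. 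Its extension criterion then pushes the flow past every finite time, giving existence on $[0,\infty)$.

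For the collapse I would first select good times $t_i\to\infty$ with $\frac{d}{dt}\FF(u_{t_i},f_{t_i})\to0$, along which all three dissipation integrals of Proposition \ref{p:Liouvillemon} are at most $\ge$. The crucial algebraic step is to decompose $\int_\Sigma e^u\dot{u}^2\,dV_\Sigma$: writing $\dot{u}=-R_g+\tfrac12\brs{F_{\mu}}^2_{g,h}-1$ and expanding the square, the cross term integrates against $e^u\,dV_\Sigma=dV_g$, where Gauss--Bonnet gives $\int_\Sigma R_g\,dV_g=4\pi\chi(\Sigma)=0$ while the smallness of $\int_\Sigma\brs{F}^2_g\,dV_g=\int_\Sigma e^{-u}\brs{F}^2_{g_{\Sigma},h}\,dV_\Sigma$ controls the remaining piece. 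This forces \emph{simultaneously} $\Vol(g_{t_i})=\int_\Sigma e^u\,dV_\Sigma\le C\ge$ and $\int_\Sigma e^u\left(R_g-\tfrac12\brs{F}^2\right)^2 dV_\Sigma\le C\ge$. The vanishing area is precisely the collapse of the base, and by Jensen the mean $\bar{u}_{t_i}\to-\infty$.

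To convert shrinking area into shrinking diameter I would normalize $w=u_{t_i}-\bar{u}_{t_i}$, so that $\brs{\brs{d w}}_{L^2}=\brs{\brs{d u}}_{L^2}$ is uniformly bounded and, by Lemma \ref{l:confSobest}, the Sobolev constant of $e^{w}g_\Sigma$ is uniformly bounded; Moser--Trudinger then bounds $\int_\Sigma e^{pw}\,dV_\Sigma$ and hence the volume of $e^{w}g_\Sigma$ from above, with Jensen bounding it below. A bounded Sobolev constant yields the non-collapsing estimate $\Vol(B_r(p))\ge cr^4$, which with bounded total volume bounds $\diam(e^{w}g_\Sigma)$ uniformly; scaling back, $\diam(g_{t_i})=e^{\bar{u}_{t_i}/2}\diam(e^{w}g_\Sigma)\to0$. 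I would then upgrade from the sequence to all times using a diameter lower bound for Ricci--Yang--Mills flow valid when $H^1(\Sigma)\neq0$, giving $\diam(g_t)\le c\,\diam(g_{\til{t}})$ for a good time $\til{t}\in[t,t+1]$; since good times occur in every unit interval, $\diam(g_t)\to0$. As $h_t=e^{-t}h_0\to0$ the fibers collapse as well, so $\diam(G_t)\to0$ and $(M,G_t)$ converges to a point in the Gromov--Hausdorff topology.

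The main obstacle, distinguishing this case from $\chi(\Sigma)<0$, is the absence of uniform-in-time bounds on $u$, which forces every step after existence to be scale-invariant: one cannot extract a smooth limit because the whole manifold collapses. The two delicate points are the decomposition of $\int e^u\dot{u}^2$ that extracts smallness of \emph{both} the area and $R_g-\tfrac12\brs{F}^2$ (this is exactly where $\chi(\Sigma)=0$ enters, via Gauss--Bonnet), and the replacement of pointwise curvature control by the Sobolev/non-collapsing diameter estimate of Lemma \ref{l:confSobest}. Propagating the sequential decay of the diameter to all $t\to\infty$ through the flow-specific diameter estimate is the final ingredient requiring care.
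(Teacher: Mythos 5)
Your proposal is correct and follows essentially the same route as the paper: long-time existence from the one-sided exponential bounds on $u$ plus the regularity/extension criterion, selection of good times from the monotonicity of $\FF$, the decomposition of $\int_\Sigma e^u\dot{u}^2\,dV_\Sigma$ whose cross term vanishes up to $O(\ge)$ precisely because $\chi(\Sigma)=0$, the normalization $w=u_{t_i}-\bar{u}_{t_i}$ combined with Lemma \ref{l:confSobest} to convert small area into small diameter, and the Ilmanen--Knopf-type diameter lower bound to propagate the decay from the good times $t_i$ to all $t$. The only difference is expository (you invoke Gauss--Bonnet explicitly for the cross term, where the paper bounds it directly), so there is nothing substantive to add.
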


\subsection{The case \texorpdfstring{$\chi(\Sigma) > 0$}{} and trivial bundle}
\subsubsection{\texorpdfstring{$\gk$}{}-noncollapsing} \label{ss:knonc}

Here we sketch an argument for proving uniform $\gk$-noncollapsing of the metrics in this setting, adapting arguments of (\cite{StreetsThesis, GindiStreets, GRFbook}), all based on the original argument of Perelman \cite{Perelman1}.  To begin we define a modification of Perelman's entropy formula adapted to Ricci-Yang-Mills flow, which is not quite monotone, though still of some interest in studying Ricci-Yang-Mills flow more generally.  Similar entropy monotonicity formulas have appeared in \cite{Lott1}.  In the setting of a trivial bundle, we can make a further modification which yields a monotone quantity which gives the required $\gk$-noncollapsing.

\begin{defn} Given $T^k \to M \to \Sigma$ a principal $T^k$ bundle over $\Sigma$, fix data $(g,\mu,h)$ as above, with $F_{\mu} = d \mu \in \Lambda^2 T^* \otimes \mathfrak t^k$.  Fix $u \in C^{\infty}(M), u > 0$, $\tau > 0$, and define $f_-$ via $u = \frac{e^{-f_-}}{(4 \pi \tau)^{\frac{n}{2}}}$.  Define
\begin{align*}
\WW_-(g,\mu,h,f_-,\tau) =&\ \int_{\Sigma} \left[ \tau \left( \brs{\N f_-}^2 + R - \frac{1}{4} \brs{F_{\mu}}^2_{g,h} \right) + f_- - n \right] u dV.
\end{align*}
\end{defn}

\begin{lemma} \label{l:entvar} Given $T^k \to M \to \Sigma$ a principal $T^k$ bundle over $\Sigma$, fix data $(g,\mu,h, u, \tau)$ as above, and let
\begin{align*}
\gd g = v_g, \quad \gd h = v_h, \quad \gd \mu = \ga, \quad \gd f = \phi, \quad \gd \tau = \gs.
\end{align*}
Then
\begin{align*}
\gd \WW_- & \left( v_g, v_h, \ga, \phi, \gs \right)\\
=&\ \int_{\Sigma} \left[ \gs \left(\brs{\N f_-}^2 + R - \frac{1}{4} \brs{F_{\mu}}^2_{g,h} \right) - \tau \left<v_g, \Rc - \frac{1}{2} F_{\mu}^2 + \N^2 f \right> - \frac{\tau}{4} \IP{v_h, \tr_g F_{\mu} \otimes F_{\mu}} \right. \\
&\ \qquad \left. - \tau \IP{ \ga, d^*_g F_{\mu} + \N f_- \lrcorner F_{\mu}} + \phi \right.\\
&\ \qquad \left. + \left[ \tau \left(2 \gD f_- - \brs{\N f_-}^2 + R - \frac{1}{4} \brs{F_{\mu}}^2_{g,h} \right) + f - n \right] \left(\frac{\tr_g v_g}{2} - \phi - \frac{n \gs}{2 \tau} \right) \right] u dV_g
\end{align*}
\end{lemma}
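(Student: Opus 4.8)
**The plan is to compute the first variation of $\WW_-$ directly, treating each of the five variations $v_g, v_h, \ga, \phi, \gs$ as independent perturbations and collecting terms.** The functional $\WW_-$ is an integral of the form $\int_\Sigma \Psi \, u \, dV_g$ where $\Psi = \tau(\brs{\N f_-}^2 + R - \tfrac14\brs{F_\mu}^2_{g,h}) + f_- - n$, so by the product rule the variation naturally splits into three pieces: the variation of the integrand $\Psi$, the variation of the measure $u\,dV_g$, and the cross-terms. Since $u = \frac{e^{-f_-}}{(4\pi\tau)^{n/2}}$, I should first record how $u\,dV_g$ varies. A direct computation gives $\gd(u\,dV_g) = \left(\tfrac{\tr_g v_g}{2} - \phi - \tfrac{n\gs}{2\tau}\right) u\,dV_g$, since $\gd\, dV_g = \tfrac12 \tr_g v_g \, dV_g$, $\gd f_- = \phi$ contributes $-\phi$ through $e^{-f_-}$, and $\gd\tau = \gs$ contributes $-\tfrac{n\gs}{2\tau}$ through the $(4\pi\tau)^{-n/2}$ factor. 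This accounts for the final bracketed line of the claimed formula, provided I can rewrite $\tau\brs{\N f_-}^2$ as $\tau(2\gD f_- - \brs{\N f_-}^2) + \ldots$ after integration by parts.

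**Next I would vary each geometric quantity inside $\Psi$.** The term $\gd(\brs{\N f_-}^2)$ produces $2\IP{\N f_-, \N \phi}$ plus a term $-v_g(\N f_-, \N f_-)$ from varying $g^{-1}$; integrating the $\IP{\N f_-,\N\phi}$ term by parts against the weighted measure is where the $2\gD f_-$ and the $\brs{\N f_-}^2$ in the final line arise, and these must be combined carefully with the measure variation. For the scalar curvature I use the standard formula $\gd R = -\IP{v_g, \Rc} - \gD(\tr_g v_g) + \divg\divg v_g$, and the $\gD$ and $\divg\divg$ terms integrate by parts to hit $u$, eventually contributing the $\N^2 f$ term inside $\IP{v_g, \Rc - \tfrac12 F_\mu^2 + \N^2 f}$. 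The curvature term $\gd\brs{F_\mu}^2_{g,h}$ splits into three: variation of $g^{-1}$ giving the $F_\mu^2$ contraction against $v_g$, variation of $h$ giving $\IP{v_h, \tr_g F_\mu\otimes F_\mu}$, and variation of $\mu$ giving $F_\mu = d\mu$ a term $2\IP{F_\mu, d\ga}$ which after integration by parts yields $\IP{\ga, d^*_g F_\mu}$ together with the contracted term $\N f_- \lrcorner F_\mu$ coming from integrating by parts against the weight $u$.

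**I expect the main obstacle to be the bookkeeping of the integration-by-parts boundary interactions with the weighted measure $u\,dV_g$,** specifically ensuring that every term produced by moving derivatives off $v_g$, $\ga$, and $\phi$ lands correctly in either the explicit pairings on the first two lines or gets absorbed into the $2\gD f_-$ and $\N f_- \lrcorner F_\mu$ corrections. The $\N f_-\lrcorner F_\mu$ term in particular is delicate: it arises precisely because integrating $\IP{F_\mu, d\ga}$ by parts against the non-constant weight $e^{-f_-}$ produces a drift term $-\N f_- \lrcorner F_\mu$ in addition to the naive codifferential $d^*_g F_\mu$. The overall strategy is to verify each of the five coefficient groups separately — matching the $\gs$-coefficient to $\Psi$ itself, the $v_g$-coefficient to $-\tau(\Rc - \tfrac12 F_\mu^2 + \N^2 f)$, the $v_h$-coefficient to $-\tfrac{\tau}{4}\tr_g F_\mu\otimes F_\mu$, the $\ga$-coefficient to $-\tau(d^*_g F_\mu + \N f_-\lrcorner F_\mu)$, and the $\phi$-coefficient to $1$ — so that no cross-contamination is overlooked, with the residual of all integrations by parts precisely assembling into the final weight-variation line.
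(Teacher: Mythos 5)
Your plan is correct and is precisely the standard Perelman-type first-variation computation that this lemma adapts to the Ricci--Yang--Mills setting: the paper states Lemma \ref{l:entvar} without proof, implicitly deferring to the analogous calculations in \cite{Perelman1, StreetsThesis, GindiStreets, GRFbook}, and your term-by-term variation (measure factor $\frac{\tr_g v_g}{2} - \phi - \frac{n\gs}{2\tau}$, the formula for $\gd R$, the three-way split of $\gd \brs{F_\mu}^2_{g,h}$, and the drift term $\N f_- \lrcorner F_\mu$ arising from integrating by parts against the weight $e^{-f_-}$) is exactly the computation being invoked. The only point requiring care, which you correctly flag, is assembling the integration-by-parts residuals into the $2\gD f_- - \brs{\N f_-}^2$ replacement in the final line.
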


\begin{defn} Given $T^k \to M \to \Sigma$ a principal $T^k$ bundle over $\Sigma$, suppose $(g_t, \mu_t)$ is a solution of Ricci-Yang-Mills flow.  We say that a one-parameter family of functions $f_-$ satisfies the \emph{conjugate heat equation} if
\begin{align*}
\dt f_- = - \gD f_- + \brs{\N f_-}^2 - R + \frac{1}{2} \brs{F}^2_{g,h} + \frac{n}{2 \tau}.
\end{align*}
\end{defn}

\begin{prop} \label{p:entropymon1} Let $(M^n, g_t, \mu_t, h)$ be a solution to Ricci-Yang-Mills flow.  Let $f_-$ denote an associated solution of the conjugate heat equation.  Then, for $T > 0$, setting $\tau = T - t$,
\begin{align*}
\frac{d}{dt} \WW_- \left( g, \mu, h, f_-, \tau \right) =&\ \int_{\Sigma} \left[ 2 \tau \brs{\Rc - \frac{1}{2} F_{\mu}^2 + \N^2 f_- - \frac{g}{2 \tau}}^2 + \tau \brs{d^*_g F_{\mu} + \N f_- \lrcorner F_{\mu}}^2 - \frac{3}{4} \brs{F_{\mu}}^2_{g,h} \right] u dV_g.
\end{align*}
\begin{proof} By diffeomorphism invariance of the functional it suffices to compute for the gauge-fixed system:
\begin{align*}
\dt g =&\ -2 \Rc + F^2_{\mu} - 2 \N^2 f_-,\\
\dt \mu =&\ - d^*_g F_{\mu} - \N f_-  \lrcorner F_{\mu},\\
\dt f_- =&\ - \gD f_- - R + \frac{1}{2} \brs{F_{\mu}}^2_{g,h} + \frac{n}{2 \tau}.
\end{align*}
Observe that for this system of equations, $\frac{1}{2} \tr_g \dt g - \dt f_- + \frac{n}{2 \tau} = 0$.  It follows from Lemma \ref{l:entvar} that
\begin{align*}
\frac{d}{dt} \WW_- & \left( g, \mu, h, f_-, \tau \right)\\
=&\ \int_{\Sigma} \left[ 2 \tau \left< \Rc - \frac{1}{2} F_{\mu}^2 + \N^2 f_-, \Rc - \frac{1}{2} F_{\mu}^2 + \N^2 f_- \right> + \tau \brs{d^*_g F_{\mu} + \N f_- \lrcorner F_{\mu}}^2 \right.\\
&\ \qquad \left.  - \gD f_- - R + \frac{1}{2} \brs{F_{\mu}}^2_{g,h} + \frac{n}{2 \tau} - \left( \brs{\N f_-}^2 + R - \frac{1}{4} \brs{F_{\mu}}^2_{g,h} \right) \right] u dV_g\\
=&\ \int_{\Sigma} \left[ 2 \tau \brs{\Rc - \frac{1}{2} F_{\mu}^2 + \N^2 f_- - \frac{g}{2 \tau}}^2 + \tau \brs{d^*_g F_{\mu} + \N f_- \lrcorner F_{\mu}}^2 - \frac{3}{4} \brs{F_{\mu}}^2_{g,h} \right] u dV_g,
\end{align*}
as claimed.
\end{proof}
\end{prop}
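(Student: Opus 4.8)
The plan is to adapt Perelman's proof of $\WW$-entropy monotonicity for Ricci flow to the coupled RYM system, with the crucial point being that $\WW_-$ is invariant under the group of bundle automorphisms covering diffeomorphisms of $\Sigma$: it is the integral of geometric scalars against the weighted measure $u\, dV_g$. Consequently $\frac{d}{dt}\WW_-$ is unchanged if we replace the given flow by one differing from it by a time-dependent gauge transformation. I would therefore first introduce the \emph{gauge-fixed} flow
\begin{align*}
\dt g = -2\Rc + F_\mu^2 - 2\N^2 f_-, \qquad \dt \mu = -d_g^* F_\mu - \N f_- \lrcorner F_\mu, \qquad \dt f_- = -\gD f_- - R + \tfrac12 \brs{F_\mu}^2_{g,h} + \tfrac{n}{2\tau},
\end{align*}
and verify that it differs from RYM flow together with the conjugate heat equation precisely by the infinitesimal action of the gradient field $\N f_-$. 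Indeed $\LL_{\N f_-} g = 2\N^2 f_-$, the Lie derivative of $\mu$ contributes $\N f_- \lrcorner F_\mu$ modulo a vertical gauge term, and $\LL_{\N f_-} f_- = \brs{\N f_-}^2$; this accounts for the removal of the $\brs{\N f_-}^2$ term relative to the conjugate heat equation and the extra terms in $\dt g$ and $\dt\mu$. By gauge invariance, computing $\frac{d}{dt}\WW_-$ for this system gives the same answer.

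Next I would apply the first variation formula of Lemma \ref{l:entvar} with
\begin{align*}
v_g = \dt g, \qquad v_h = \dt h = 0, \qquad \ga = \dt \mu, \qquad \phi = \dt f_-, \qquad \gs = \dt \tau = -1.
\end{align*}
The key observation, the RYM analogue of Perelman's measure-preservation identity, is that the gauge-fixed system is built so that $\tfrac12 \tr_g(\dt g) - \dt f_- + \tfrac{n}{2\tau} = 0$, equivalently $\dt(u\, dV_g) = 0$ pointwise. This is exactly the factor $\tfrac{\tr_g v_g}{2} - \phi - \tfrac{n\gs}{2\tau}$ multiplying the long bracket in Lemma \ref{l:entvar}, so that entire term vanishes and no sign-indefinite contribution survives. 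Substituting $v_g = -2(\Rc - \tfrac12 F_\mu^2 + \N^2 f_-)$ and $\ga = -(d_g^* F_\mu + \N f_- \lrcorner F_\mu)$ turns the two inner-product terms into the perfect squares $2\tau \brs{\Rc - \tfrac12 F_\mu^2 + \N^2 f_-}^2$ and $\tau\brs{d_g^* F_\mu + \N f_- \lrcorner F_\mu}^2$, while the $v_h$-term drops since $h$ is fixed.

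It then remains to reconcile the surviving lower-order terms with the stated right-hand side. I would complete the square by rewriting $2\tau\brs{\Rc - \tfrac12 F_\mu^2 + \N^2 f_-}^2$ in terms of $2\tau\brs{\Rc - \tfrac12 F_\mu^2 + \N^2 f_- - \tfrac{g}{2\tau}}^2$; expanding the latter produces the cross term $-2\tr_g(\Rc - \tfrac12 F_\mu^2 + \N^2 f_-) = -2R + \brs{F_\mu}^2_{g,h} - 2\gD f_-$ together with the constant $\tfrac{n}{2\tau}$, using $\tr_g F_\mu^2 = \brs{F_\mu}^2_{g,h}$ and $\brs{g}^2 = n$. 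Finally, integrating the scalar terms against $u\, dV_g$ and using $\N u = -(\N f_-)\, u$ to convert $\int \gD f_-\, u\, dV_g$ into $\int \brs{\N f_-}^2 u\, dV_g$ cancels the Laplacian and gradient contributions and collapses the remaining curvature terms into the single multiple of $\brs{F_\mu}^2_{g,h}$ in the statement.

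The main obstacle I anticipate is bookkeeping rather than conceptual. One must carefully track three distinct norms, the full tensor norm on symmetric $2$-tensors, the $(g,h)$-norm on the bundle-valued curvature $F_\mu$, and the Yang-Mills divergence $d_g^* F_\mu + \N f_- \lrcorner F_\mu$, and ensure that the coefficients of $R$, $\gD f_-$, $\brs{\N f_-}^2$ and $\brs{F_\mu}^2_{g,h}$ all combine correctly after the weighted integration by parts. The gauge-fixing step is the decisive one, since it is precisely what forces the factor $\tfrac{\tr_g v_g}{2} - \phi - \tfrac{n\gs}{2\tau}$ to vanish identically; absent this, the first variation would retain a term of no definite sign, and the clean squared form of the statement would not emerge.
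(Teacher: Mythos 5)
Your proposal follows the paper's argument essentially verbatim: both reduce to the same gauge-fixed system via the flow of $\N f_-$, observe that $\tfrac{1}{2}\tr_g \dot g - \dot f_- + \tfrac{n}{2\tau} = 0$ kills the sign-indefinite bracket in Lemma \ref{l:entvar}, and then complete the square and integrate by parts against $u\, dV_g$ to reach the stated formula. The only difference is that you spell out the weighted integration by parts $\int \gD f_- \, u\, dV_g = \int \brs{\N f_-}^2 u \, dV_g$ explicitly, which the paper leaves implicit.
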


While the result of Proposition \ref{p:entropymon1} is suggestive, the presence of the one negative term prevents immediate application as a monotonicity formula with attendant estimates.  This can be rectified in the presence of a bounded subsolution of the heat equation with inhomogeneous term $-\frac{3}{4} \brs{F}^2_{g,h}$ (cf. \cite{GRFbook} Ch. 6 where one employs a `torsion-bounding subsolution').  In the setting of a trivial torus bundle over a Riemann surface, this is provided by $\brs{\N f}^2_{g_t, h_0}$ using Lemma \ref{l:trivS2gradf}.

\begin{prop} \label{p:entropymon2} Let $T^k \to M \to \Sigma$ be the trivial $T^k$ bundle over a Riemann surface $\Sigma$.  Let $(M^n, g_t, \mu_t, h)$ be a solution to Ricci-Yang-Mills flow, with associated potential functions $f_t$.  Let $f_-$ denote an associated solution of the conjugate heat equation.  Then, for $T > 0$,setting $\tau = T - t$,
\begin{align*}
\frac{d}{dt} & \left[ \WW_- \left( g, \mu, h, f_-, \tau \right) - \int_{\Sigma} \brs{\N f}^2_{g_t,h_0} u dV_g \right]\\
&\ \qquad \geq \int_{\Sigma} \left[ 2 \tau \brs{\Rc - \frac{1}{2} F^2 + \N^2 f_- - \frac{g}{2 \tau}}^2 + \tau \brs{d^*_g F + \N f_- \lrcorner F}^2  \right] u dV_g\\
&\ \qquad \geq 0.
\end{align*}
 \begin{proof} Using that $\dt \left( u dV_g \right) = - \gD u dV_g$, it follows from Lemma \ref{l:trivS2gradf} that
 \begin{align*}
 \frac{d}{dt} \int_{\Sigma} \brs{\N f}^2_{g_t,h_0} u dV_g =&\ \int_{\Sigma} \left( \dt - \gD \right) \brs{\N f}^2_{g_t, h_0} u dV_g\\
 =&\ \int_{\Sigma} \left( -2 \brs{F_{\mu_f}}^2 -2 \brs{(\N^2 f)^{2,0 + 0,2}}^2 - \brs{F_{\mu_f}}_{g,h}^2 \brs{\N f}^2_{g_t,h_0} \right) u dV_g.
 \end{align*}
 Combining this with Proposition \ref{p:entropymon1} gives the result.
 \end{proof}
\end{prop}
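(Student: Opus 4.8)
The plan is to treat $\int_{\Sigma} \brs{\N f}^2_{g_t,h_0} u\, dV_g$ as a \emph{torsion-bounding} correction whose time derivative generates a positive multiple of $\brs{F_{\mu}}^2_{g,h}$, large enough to absorb the lone indefinite term $-\tfrac{3}{4}\brs{F_{\mu}}^2_{g,h}$ in the entropy variation of Proposition \ref{p:entropymon1}. After subtracting the correction, the claim is that the evolution becomes a manifest sum of non-negative terms.

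First I would record how integrals against the weighted measure $u\, dV_g$ differentiate in time. Since $f_-$ solves the conjugate heat equation and $u = (4\pi\tau)^{-n/2} e^{-f_-}$, one has $\dt(u\, dV_g) = -\gD u\, dV_g$, so that for any family $\phi_t$, integration by parts on the closed manifold yields $\tfrac{d}{dt}\int_{\Sigma}\phi\, u\, dV_g = \int_{\Sigma}(\dt - \gD)\phi\, u\, dV_g$. I would apply this to $\phi = \brs{\N f}^2_{g_t,h_0}$ and substitute the pointwise evolution of Lemma \ref{l:trivS2gradf}, which in this trivial-bundle, $\gl = 0$ setting reads $(\dt - \gD)\brs{\N f}^2_{g_t,h_0} = -2\brs{F_{\mu_f}}^2 - 2\brs{(\N^2 f)^{2,0+0,2}}^2 - \brs{F_{\mu_f}}^2\brs{\N f}^2$, producing the displayed formula for $\tfrac{d}{dt}\int_\Sigma \brs{\N f}^2 u\, dV_g$.

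Subtracting this from Proposition \ref{p:entropymon1}, the decisive arithmetic is $-\tfrac{3}{4}\brs{F}^2 - (-2\brs{F}^2) = \tfrac{5}{4}\brs{F}^2 \geq 0$. The remaining correction contributions $2\brs{(\N^2 f)^{2,0+0,2}}^2$ and $\brs{F}^2\brs{\N f}^2$ are each non-negative, so discarding them leaves exactly the two square terms $2\tau\brs{\Rc - \tfrac{1}{2} F^2 + \N^2 f_- - \tfrac{g}{2\tau}}^2$ and $\tau\brs{d^*_g F + \N f_- \lrcorner F}^2$ with a definite sign, yielding both asserted inequalities.

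The only real care needed, and the step I would flag as the crux, is conventional matching rather than analysis: one must confirm that $\brs{F_{\mu}}^2_{g,h}$ in Proposition \ref{p:entropymon1} and $\brs{F_{\mu_f}}^2_{g_t,h_0}$ in Lemma \ref{l:trivS2gradf} denote literally the same pointwise quantity. This is precisely where the standing hypotheses enter: triviality of the bundle permits the choice $F_{\bar\mu}\equiv 0$, which forces $(\N^2 f)^{1,1} = F_{\mu_f}$ in Lemma \ref{l:trivS2gradf} and thereby isolates the full $\brs{F}^2$ with the correct coefficient, while $\gl = 0$ gives $h_t \equiv h_0$ so the two fiber metrics agree. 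Once this identification is secured, the bad term is strictly dominated and the monotonicity follows with no further estimates.
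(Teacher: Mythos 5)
Your proposal is correct and follows essentially the same route as the paper: differentiate the correction term via $\dt(u\,dV_g) = -\gD u\, dV_g$ and Lemma \ref{l:trivS2gradf}, then combine with Proposition \ref{p:entropymon1}, the key point being that the $+2\brs{F_{\mu_f}}^2$ contribution dominates the $-\tfrac{3}{4}\brs{F_{\mu}}^2$ defect. Your explicit bookkeeping of the $\tfrac{5}{4}\brs{F}^2$ surplus and of the conventions ($F_{\bar\mu}\equiv 0$ from triviality, $h$ fixed) only makes explicit what the paper leaves to the reader.
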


Using Proposition \ref{p:entropymon2} it is possible to obtain a uniform $\gk$-noncollapsing result for Ricci-Yang-Mills flow in this setting, as well as the existence of a blowup limit at any finite time singularity which is uniformly $\gk$-noncollapsed (cf. \cite{Perelman1} for the definitions)

\begin{thm} \label{t:blowups} Let $T^k \to M \to \Sigma$ be the trivial $T^k$ bundle over a Riemann surface $\Sigma$.  Let $(M^n, g_t, \mu_t, h)$ be a solution to Ricci-Yang-Mills flow.  Then $g_t$ is not locally collapsing at $T < \infty$.  Furthermore, assume $(x_i, t_i)$ satisfies
\begin{enumerate}
\item $\gL_i := \brs{\Rm}(x_i,t_i) \to \infty$,
\item $\sup_{M \times [0,t_i]} \brs{\Rm}(x_i,t_i) \leq C \gL_i$,
\end{enumerate}
then the sequence of pointed solutions $\left\{ (\gL_i g_i(t_i + \gL_i^{-1} t), \mu(t_i + \gL_i^{-1} t), \gL_i h(t_i + \gL_i^{-1}t) \right\}$ converges subsequentially to a complete ancient solution to Ricci-Yang-Mills flow which is $\gk$-noncollapsed on all scales for some $\gk > 0$.
\begin{proof} The proof of no-local-collapsing follows the original argument of Perelman using the monotonicity formula of Proposition \ref{p:entropymon2} (cf. \cite{GRFbook} Ch. 6 for the modifications coming from the inclusion of the $\brs{\N f}^2_{g_t,h_0}$ term).  Using this, the construction of the blowup limit follows from the compactness theory for Ricci-Yang-Mills flow/generalized Ricci flow (\cite{StreetsThesis}, \cite{GRFbook} Ch. 5).
\end{proof}
\end{thm}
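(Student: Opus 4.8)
The plan is to follow Perelman's two-step strategy: derive uniform $\gk$-noncollapsing from the monotone entropy quantity of Proposition \ref{p:entropymon2}, and then extract the blowup limit using the compactness theory for Ricci-Yang-Mills flow. The essential new input, available precisely because the bundle is trivial, is the uniform bound $\sup_M \brs{\N f}^2_{g_t,h_0} \leq C_0$ supplied by Proposition \ref{p:trivS2gradf}. This guarantees that the correction term $\int_\Sigma \brs{\N f}^2_{g_t,h_0} u \, dV_g$ entering Proposition \ref{p:entropymon2} never exceeds $C_0$ under the normalization $\int_\Sigma u \, dV_g = 1$.

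First I would define the modified invariant
\begin{align*}
\nu_-(g,\mu,h,\tau) = \inf \left\{ \WW_-(g,\mu,h,f_-,\tau) - \int_\Sigma \brs{\N f}^2_{g_t,h_0} u \, dV_g \ : \ \int_\Sigma u \, dV_g = 1 \right\}.
\end{align*}
Standard arguments show the infimum is attained, and Proposition \ref{p:entropymon2} shows that, along the flow coupled to the conjugate heat equation, $\nu_-$ is monotone nondecreasing; it therefore admits a uniform lower bound on any finite interval $[0,T)$ in terms of its value at $t=0$. With this lower bound in hand, I would run Perelman's contradiction argument: assuming $g_t$ collapses at some scale $r$ near $T$, one inserts the standard logarithmic cutoff test function for $f_-$ into $\WW_-$ and shows the entropy diverges to $-\infty$, contradicting the lower bound on $\nu_-$ and yielding a uniform $\gk > 0$. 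The point requiring care is that $\WW_-$ carries the extra term $-\tfrac14 \brs{F_\mu}^2_{g,h}$ relative to the Ricci flow entropy, and that the fiber metric $h$ rescales under parabolic dilation. Both are harmless: $\brs{F_\mu}^2_{g,h}$ is a curvature quantity, bounded on the scale at which collapsing is tested, so its contribution to the test-function computation is lower order and does not affect the divergence. Together with the uniform control of the correction term, this reproduces the situation of (\cite{GRFbook} Ch. 6), and the noncollapsing follows as there.

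For the blowup limit, hypothesis (2) gives, after parabolic rescaling by $\gL_i$, a uniform bound $\brs{\Rm} \leq C$ on the backward parabolic regions, whose time-extent is $\gL_i t_i \to \infty$ since $\gL_i \to \infty$ and $t_i \to T < \infty$ (with $T > 0$). As the total-space curvature controls both the base geometry and the bundle curvature $F_\mu$ together with its covariant derivatives (via the regularity estimates, cf. Proposition \ref{p:regularity}), the rescaled data $(\gL_i g, \mu, \gL_i h)$ meet the hypotheses of the compactness theorem for Ricci-Yang-Mills flow / generalized Ricci flow (\cite{StreetsThesis}, \cite{GRFbook} Ch. 5). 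Combined with the scale-invariant $\gk$-noncollapsing just established, this produces subsequential smooth Cheeger-Gromov convergence to a complete pointed solution on $(-\infty,0]$ --- an ancient solution --- which inherits $\gk$-noncollapsing on all scales.

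The main obstacle is the noncollapsing step, specifically verifying that neither the extra entropy term $-\tfrac14\brs{F_\mu}^2_{g,h}$ nor the correction $\int_\Sigma \brs{\N f}^2_{g_t,h_0} u\,dV_g$ disrupts Perelman's collapsing contradiction. The two observations above --- that the correction is bounded by triviality of the bundle (Proposition \ref{p:trivS2gradf}) and that the curvature term is subordinate on the collapsing scale --- are exactly what make the argument go through, after which the compactness theory applies essentially formally.
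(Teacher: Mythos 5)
Your proposal is correct and follows essentially the same route as the paper's proof, which simply cites Perelman's argument applied to the monotone quantity of Proposition \ref{p:entropymon2} together with the compactness theory of \cite{StreetsThesis} and \cite{GRFbook}. The two points you single out --- that the correction term $\int_\Sigma \brs{\N f}^2_{g_t,h_0} u\, dV_g$ is uniformly bounded by Proposition \ref{p:trivS2gradf} and that the extra curvature term in $\WW_-$ is subordinate at the collapsing scale --- are precisely the ``modifications'' the paper defers to \cite{GRFbook} Ch.~6.
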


\begin{rmk} The blowup limits constructed in Theorem \ref{t:blowups} will necessarily be solutions to Ricci-Yang-Mills flow on $\mathbb R^k$-bundles, since the fiber metric is fixed along the flow, and thus blowing up along the rescaled sequence.
\end{rmk}

\subsubsection{Global existence and convergence}

\begin{prop} \label{p:trivialS2conv} Let $T^k \to \Sigma \times T^k \to \Sigma$ denote the trivial principal $T^k$-bundle over a compact Riemann surface with $\chi(\Sigma) > 0$.  Given $G_0 = \pi^* g_0 + \tr_{h_0} \mu_0 \otimes \mu_0$ an invariant metric on $\Sigma \times T^k$, the solution to (\ref{f:nRYM}) with $\gl = 0$ and initial condition $(g_0,\mu_0,h_0)$ exists on a finite time interval $[0,T)$, and satisfies
\begin{enumerate}
\item $\lim_{t \to T} (T - t) \brs{F_{\mu_t}}^2_{g_t,h_0} = 0$,
\item $\lim_{t \to T} (T - t)^{-1} g_t = g_{\Sigma}$,
\end{enumerate}
$g_{\Sigma}$ denotes a metric of constant curvature $1$.
\begin{proof} By taking a double cover if necessary we can reduce to the case $\Sigma \cong S^2$.  We first claim that the existence time is finite.  We describe the proof in the case $k = 1$ for simplicity, the general case is directly analogous.  As in the proof of Proposition \ref{p:Hopfconvergence2}, we can extend the principal connection $\mu_0$ using a flat connection to obtain a Hermitian connection $\underline{\mu}_0$ on the trivial $T^2$ bundle on $S^2 \times T^2$.  This defines a choice of complex structure on $T^2$, which then yields a complex structure on the product $S^2 \times T^2$.  Furthermore, by (\cite{Streetssolitons} Proposition 5.3) the data $(g_0, \underline{\mu}_0, h_0)$ defines a Hermitian metric on $S^2 \times T^2$, which is furthermore pluriclosed by (\cite{Streetssolitons} Lemma 5.6).  By (\cite{Streetssolitons} Proposition 6.3, cf. also Remark 6.4) it follows that the pluriclosed flow with this initial data will reduce to Ricci-Yang-Mills flow.  The $S^2$ slices are all holomorphic curves in this complex manifold, and it follows from Stokes Theorem (cf. \cite{PCF} Proposition 3.9) that for the solution $\gw_t$ to pluriclosed flow one has
\begin{align*}
\frac{d}{dt} \int_{S^2} \gw_t = - 8 \pi.
\end{align*}
It follows that the area goes to zero in finite time for any initial data, and thus the flow must go singular at a time $T < (8\pi)^{-1} \Area_{\gw_0}(S^2)$.

Fix a solution $(g_t, \mu_t)$ to Ricci-Yang-Mills flow as in the statement, with associated potential functions $f_t$.  If the flow exists on a maximal time interval $[0, T)$, $T < \infty$, by an elementary point-picking argument we can choose a sequence $(x_i, t_i)$ of points satisfying conditions (1) and (2) of Theorem \ref{t:blowups}, and construct a corresponding blowup limit $(g^{\infty}_t, \mu^{\infty}_t, h^{\infty})$ as described in Theorem \ref{t:blowups}.  We note that by construction, the sequence of functions $f^i(x,t) = f(x, t_i + \gL_i^{-1} t)$ are potential functions for the rescaled flows, and the $f^i$ will converge to a limit function $f^{\infty}$ satisfying $F_{\mu^{\infty}_t} = d d^c f^{\infty}_t$.  On the other hand, since $\brs{\N f}^2_{g_t, h_0}$ has a uniform upper bound by Proposition \ref{p:trivS2gradf}, it follows by the scaling law that $\brs{\N f^{\infty}}^2_{g^{\infty}, h_0} \equiv 0$.  Thus $f^{\infty}$ is constant, and so $F_{\mu^{\infty}} \equiv 0$.  It follows that $g^{\infty}_t$ is an ancient $2$-dimensional $\gk$-solution to Ricci flow, which by (\cite{Perelman1} Corollary 11.3) is isometric to the standard round shrinking sphere solution.  Using this blowup sequence it is straightforward to show that the whole flow line converges to a round point and satisfies the scale-invariant decay of curvature claimed in the statement.
\end{proof}
\end{prop}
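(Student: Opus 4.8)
The plan is to prove finite-time existence by a complex-geometric argument, then to analyze the singularity at time $T$ through a blowup limit which the $\gk$-noncollapsing of Theorem \ref{t:blowups} upgrades to a nonflat complete ancient solution, and finally to identify this limit with the round shrinking sphere via Perelman's classification, propagating it back to extract the stated decay.

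First I would establish $T < \infty$. After passing to a double cover we may assume $\Sigma \cong S^2$. Since the bundle is trivial, I would extend $\mu_0$ by a flat connection so that $(g_0, \mu_0, h_0)$ is realized as an invariant pluriclosed metric $\gw_0$ on $S^2 \times T^2$ for a suitable product complex structure; by the reduction of Proposition \ref{p:flowreduction} the pluriclosed flow of $\gw_0$ is then gauge-equivalent to the given Ricci-Yang-Mills flow. The $S^2$ slices are holomorphic curves, and a Stokes-theorem computation (as in \cite{PCF}) gives $\frac{d}{dt}\int_{S^2}\gw_t = -8\pi$, so the fiber area reaches zero in finite time and a singularity must occur at some $T \leq (8\pi)^{-1}\Area_{\gw_0}(S^2)$.

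Next, assuming $T < \infty$ is maximal, I would run a point-picking argument to choose $(x_i,t_i)$ satisfying hypotheses (1)--(2) of Theorem \ref{t:blowups} and extract a complete ancient $\gk$-noncollapsed blowup limit $(g^{\infty}_t, \mu^{\infty}_t, h^{\infty})$. The crux is to show $F_{\mu^{\infty}} \equiv 0$. The associated potential functions $f^i(x,t) = f(x, t_i + \gL_i^{-1} t)$ converge to a limit $f^{\infty}$ with $F_{\mu^{\infty}} = d d^c f^{\infty}$, while Proposition \ref{p:trivS2gradf} furnishes a uniform bound on $\brs{\N f}^2_{g_t,h_0}$. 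Since this gradient norm acquires a factor $\gL_i^{-1} \to 0$ under the parabolic rescaling, the limit must satisfy $\brs{\N f^{\infty}}^2 \equiv 0$, forcing $f^{\infty}$ constant and hence $F_{\mu^{\infty}} \equiv 0$.

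Finally, with $F_{\mu^{\infty}} \equiv 0$ the limit solves Ricci flow, so $g^{\infty}_t$ is a $2$-dimensional ancient $\gk$-solution, which by Corollary 11.3 of \cite{Perelman1} must be the standard round shrinking sphere. Comparing this with the blowup sequence forces the entire flow to contract to a round point, from which both the curvature normalization and the two scale-invariant limits of the statement follow; in particular the subcriticality of the torsion in the limit yields $(T-t)\brs{F_{\mu_t}}^2_{g_t,h_0} \to 0$. I expect the step showing $F_{\mu^{\infty}} \equiv 0$ to be the main obstacle: it relies essentially on the triviality of the bundle, which via Lemma \ref{l:trivS2gradf} makes $\brs{\N f}^2_{g_t,h_0}$ a subsolution of the heat equation, hence uniformly bounded, and it is precisely this scale-invariant bound that collapses under the rescaling. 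For a nontrivial bundle this mechanism fails, reflecting the genuinely different behavior recorded in case (4) of Theorem \ref{t:RYMthm}.
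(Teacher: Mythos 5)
Your proposal is correct and follows essentially the same route as the paper: finite-time singularity via the pluriclosed-flow realization and the Stokes computation $\frac{d}{dt}\int_{S^2}\gw_t = -8\pi$, then a point-picked blowup whose limit has $F_{\mu^{\infty}}\equiv 0$ because the scale-invariant bound on $\brs{\N f}^2_{g_t,h_0}$ from Proposition \ref{p:trivS2gradf} collapses under the parabolic rescaling, followed by Perelman's classification of ancient $2$-dimensional $\gk$-solutions. Your closing remark correctly identifies the role of bundle triviality via Lemma \ref{l:trivS2gradf}, matching the paper's reasoning.
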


\subsection{The case \texorpdfstring{$\chi(\Sigma) > 0$}{}, nontrivial bundle}

\begin{prop} \label{p:Hopfconvergence2} Let $T^k \to P \to \Sigma$ denote a principal $T^k$-bundle over a compact Riemann surface with $\chi(\Sigma) > 0$, $c_1(P) \neq 0$.  Given $G_0 = \pi^* g_0 + \tr_{h_0} \mu_0 \otimes \mu_0$ an invariant metric on $P$, the solution to (\ref{f:nRYM}) with $\gl = 0$ and initial condition $(g_0,\mu_0,h_0)$ exists on $[0,\infty)$, and there is a one-parameter family of diffeomorphisms $\phi_t$ such that
\begin{enumerate}
\item $\lim_{t \to \infty} \phi_t^* g_t = \gg_2 g_{\Sigma}$,
\item $\lim_{t \to \infty} \phi_t^* F_{\mu_t} = \gg_1 \gw_{\Sigma}$,
\end{enumerate}
where $g_{\Sigma}$ denotes a metric of constant curvature $1$ and $\gw_{\Sigma}$ denotes the associated area form.
\begin{proof} First, by lifting to a double cover it suffices to consider the case $\Sigma = S^2$.  We first employ a gauge-fixing procedure (cf. \cite{StruweFlows}) to account for the noncompact group of conformal diffeomorphisms of $S^2$.  For such a diffeomorphism $\phi$ we will set
\begin{align*}
\phi^* g = \phi^* \left( e^u g_{S^2} \right) = e^{u \circ \phi} \phi^* g_{\Sigma} = e^{v} g_{\Sigma} = \hat{g},
\end{align*}
where
\begin{align} \label{f:vdef}
v = u \circ \phi + \log \det d \phi.
\end{align}
The condition we require, aiming at application of the sharp Sobolev inequality of Aubin \cite{AubinBest}, is that
\begin{align} \label{f:gauge}
\int_{S^2} x dV_{\hat{g}} = 0,
\end{align}
where $x$ denotes the position vector in $\mathbb R^3$.  By (\cite{ChangMTI} Lemma 2) we can find a conformal transformation $\phi_0$ so that (\ref{f:gauge}) is satisfied at time $t = 0$. To solve for the relevant gauge transformations we first set $\xi_t = \left(\phi_t^{-1}\right)_* \dot{\phi}$, which is a vector field on $S^2$.  Differentiating equation (\ref{f:vdef}) gives
\begin{align*}
\dot{v} = \dot{u} \circ \phi + \xi v + \divg_{g_{S^2}} \xi.
\end{align*}
Differentiating the defining condition (\ref{f:gauge}) yields (cf. \cite{StruweFlows} Lemma 6.2)
\begin{align*}
0 =&\ \int_{S^2} \left[ x \left(  -2 v e^{-v} + \tfrac{1}{2} \brs{F_{\mu_f}}^2_{\hat{g},h} \right) - \xi \right] dV_{\hat{g}}.
\end{align*}
As explained in (\cite{StruweFlows} \S 5.2) this condition suffices to solve for $\xi$.  Thus we obtain the required family of diffeomorphisms $\phi_t$ such that
\begin{align*}
\phi_t^* g_t = \phi_t^* (e^u g_{S^2}) = e^{v_t} g_{S^2} = \hat{g}_t,
\end{align*}
where condition (\ref{f:gauge}) is satisfied at all times.  These diffeomorphisms define new curvature densities $\hat{F}_t = \phi_t^* F_t$, realized as the curvature of a family of connections $\hat{\mu}_t$ satisfying
\begin{align*}
\dt \hat{\mu}_t = - d^*_{\hat{g}} F_{\hat{\mu}} + i_{\xi_t} F_{\hat{\mu}}.
\end{align*}

Next we observe the key qualitative influence of the hypothesis that $c_1(P) \neq 0$, namely the a priori volume lower bound provided by Lemma \ref{p:S2volume}.  It follows that $\hat{g}_t$ also has a uniform lower volume bound.  It follows from \cite{AubinBest} that for any $\ge > 0$ we have $C = C(\ge)$ such that
\begin{align*}
c \leq \fint_{S^2} e^v dV_{S^2} \leq C \exp \left[ \left( \tfrac{1}{8} + \ge \right) \fint_{S^2} \brs{d v}^2_{g_{S^2}} dV_{S^2} + \tfrac{1}{2} R_{\Sigma} \fint_{S^2} v dV_{S^2} \right].
\end{align*}
Choosing for instance $\ge = \tfrac{1}{16}$, we can rearrange this to obtain the estimate
\begin{align} \label{f:Liobnd}
\frac{6}{16} \fint_{S^2} \brs{d v}^2_{g_{S^2}} dV_{S^2} + R_{\Sigma} \fint_{S^2} v dV_{S^2} \geq - C.
\end{align}
We also note that the functional $\FF$ is bounded above along the ungauged flow by Proposition \ref{p:Liouvillemon}.  Since the Liouville energy is invariant under the action of conformal diffeomorphisms by (\cite{ChangMTI} Lemma 1), it follows that the Liouville energy of $v$ is uniformly bounded above.  Using this together with (\ref{f:Liobnd}) gives
\begin{align*}
\brs{\brs{dv}}_{L^2} \leq C.
\end{align*}
Using this, it follows that $e^{\brs{v}}$ is uniformly bounded in any $L^p$ space, also also that $v$ is bounded in $L^1$.  Thus by Lemma \ref{l:confSobest} we obtain a uniform estimate of the Sobolev constant of $e^{v} g$.  Applying Proposition \ref{p:regularity} we obtain the long time existence.

To obtain the convergence we further analyze the monotonicity of $\FF$.  Using the Sobolev constant estimate for $\hat{g}$ and the uniform $L^2(\hat{g})$ estimate for $F_{\mu_f}$ it follows that that
\begin{align} \label{f:S2nontriv10}
\brs{\brs{ \brs{F_{\hat{\mu}}}_{g,h}}}_{L^4(\hat{g})}^2 \leq C \left( \brs{\brs{ \N^{\hat{g}} F_{\hat{\mu}}}}_{L^2(\hat{g})}^2 + \brs{\brs{F_{\hat{\mu}}}}_{L^2(\hat{g})}^2 \right) \leq C \left( \brs{\brs{ \N^{\hat{g}} F_{\hat{\mu}}}}_{L^2(\hat{g})}^2 + 1 \right),
\end{align}
where the last line follows from the upper bound for $\FF$.  
In particular, since $\FF$ is bounded below along the flow, there is a sequence $t_i \to \infty$ such that $\lim_{i \to \infty} \frac{d}{dt} \FF(v_t,\hat{\mu}_t,h) = 0$, thus, for sufficiently large $i$, at any time $t_i$ we have
\begin{align} \label{f:S2nontriv20}
\ge \geq \int_{\Sigma} \left( R_{\hat{g}} - \tfrac{1}{2} \brs{F_{\hat{\mu}}}^2_{\hat{g},h} \right)^2 dV_{\hat{g}} + 2 \int_{\Sigma} \brs{\N^{\hat{g}} F_{\hat{\mu}}}^2_{{\hat{g}},h} dV_{\hat{g}}.
\end{align}
This implies that $\brs{\brs{\N^{\hat{g}} F_{\hat{\mu}}}}_{L^2(g,h)}$ is bounded, thus applying (\ref{f:S2nontriv10}) we obtain a uniform $L^4({\hat{g}})$ estimate for $F_{\hat{\mu}}$, and it follows easily then from  (\ref{f:S2nontriv20}) that the Calabi energy of $\hat{g}$ is bounded.  Since $\int_M e^{2 \brs{v}} dV_{\Sigma} < C$, it follows that the curvature does not concentrate in $L^1$, and thus by (\cite{StruweFlows} Theorem 3.2, cf. also \cite{Chenweaklimits}), $v$ is uniformly bounded in $H^2$.  By following the arguments of Proposition \ref{p:regularity}, we obtain a uniform $H^2$ estimate for $v$ on $[t_i,t_i+1]$.  Since there exists a relevant time $t_i$ in every time interval of the form $[T, T + 1]$ for all sufficiently large $T$, it follows that there is a uniform $H^2$ estimate for $v$.  Using this and a bootstrapping argument as described in Proposition \ref{p:RYMconvergence}, it follows that there are uniform $C^{k,\ga}$ estimates for $v$ and $\hat{F}$.  Returning to the monotonicity formula for $\FF$, it follows that $\N^{\hat{g}} \hat{F} \to 0$, and $R_{\hat{g}} \to \mbox{const}$.  Thus $\hat{g}$ is converging to a round metric, and $\hat{F}$ is converging to a multiple of the area form, finishing the proof.
\end{proof}
\end{prop}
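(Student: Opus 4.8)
The plan is to adapt Struwe's analysis of the Ricci flow on $S^2$ \cite{StruweFlows}, the decisive new ingredient being the two-sided volume bound of Proposition \ref{p:S2volume} furnished by the hypothesis $c_1(P) \neq 0$. First I would reduce to $\Sigma = S^2$ by lifting to a double cover, so that Lemma \ref{l:uevolution} applies with $R_\Sigma = 1$ and $g_t = e^{u_t} g_{S^2}$. The fundamental difficulty is the noncompactness of the conformal automorphism group of $S^2$: the intrinsic geometry can remain bounded while the conformal factors $u_t$ concentrate and bubble, so no naive maximum-principle bound on $u$ (as in the cases $\chi(\Sigma) \leq 0$) can succeed. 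The whole strategy is organized around taming this group.

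To do so I would introduce a one-parameter family of conformal diffeomorphisms $\phi_t$ and set $\phi_t^* g_t = e^{v_t} g_{S^2} =: \hat{g}_t$, choosing $\phi_t$ so that the balancing condition $\int_{S^2} x\, dV_{\hat{g}_t} = 0$ holds for all $t$, where $x$ is the position vector in $\mathbb{R}^3$. Following \cite{StruweFlows}, with initial normalization from \cite{ChangMTI}, this gauge is maintained by solving an ODE for the generating vector field $\xi_t = (\phi_t^{-1})_*\dot{\phi}_t$. The purpose of the gauge is to unlock Aubin's improved Moser--Trudinger inequality \cite{AubinBest}, in which the coefficient of the Dirichlet energy is essentially halved for balanced conformal factors. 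Combining this with the volume lower bound $\fint_{S^2} e^{v} \geq c > 0$ of Proposition \ref{p:S2volume} and the upper bound on the Liouville energy from Proposition \ref{p:Liouvillemon} (which transfers to the gauged flow through the conformal invariance of the Liouville energy \cite{ChangMTI}), one rearranges to extract the uniform bound $\nm{dv}{L^2} \leq C$. This is the crux of the argument and is precisely where the topological hypothesis enters. From the Dirichlet bound, Lemma \ref{l:confSobest} yields a uniform bound on the Sobolev constant of $\hat{g}$, and together with the volume bounds of Proposition \ref{p:S2volume} the hypotheses of Proposition \ref{p:regularity} are met, giving long-time existence.

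For convergence I would again use the monotonicity of Proposition \ref{p:Liouvillemon}: as $\FF$ is bounded below, there is a sequence $t_i \to \infty$ with $\frac{d}{dt}\FF \to 0$, forcing smallness of the Calabi-type energy $\int_{S^2} (R_{\hat{g}} - \tfrac{1}{2}\brs{F_{\hat{\mu}}}^2)^2\, dV_{\hat{g}}$ and of $\int_{S^2} \brs{\N^{\hat{g}} F_{\hat{\mu}}}^2\, dV_{\hat{g}}$. Since $e^{2\brs{v}}$ is uniformly bounded in $L^1$, the curvature cannot concentrate in $L^1$, so Struwe's weak-compactness result (\cite{StruweFlows}, cf.\ \cite{Chenweaklimits}) gives a uniform $H^2$ bound on $v$ at these times, and a bootstrap as in Proposition \ref{p:RYMconvergence} upgrades this to uniform $C^{k,\alpha}$ control. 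Passing to the limit, the monotonicity forces $\N^{\hat{g}} F_{\hat{\mu}} \to 0$ and $R_{\hat{g}}$ to a constant, so $\hat{g}$ converges to a round metric and $\hat{F}$ to a multiple of the area form, as claimed. I expect the main obstacle to be exactly the extraction of the Dirichlet bound: one must check that Struwe's gauge-fixing carries over to the coupled Ricci--Yang--Mills system (so that the $F^2$ terms spoil neither the ODE for $\xi_t$ nor the conformal invariance of the Liouville energy), and that the volume lower bound coming from $c_1(P) \neq 0$ is genuinely strong enough to close Aubin's improved inequality against the noncompactness of the conformal group.
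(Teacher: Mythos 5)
Your proposal follows essentially the same route as the paper's proof: Struwe-style gauge fixing to the balanced condition $\int_{S^2} x\, dV_{\hat g}=0$, Aubin's improved Moser--Trudinger inequality combined with the volume lower bound of Proposition \ref{p:S2volume} and the conformally invariant Liouville energy bound to extract $\nm{dv}{L^2}\leq C$, then Lemma \ref{l:confSobest} and Proposition \ref{p:regularity} for long-time existence, and the monotonicity of $\FF$ together with Struwe's concentration-compactness and bootstrapping for convergence. The only detail you gloss over is that passing from smallness of $\int_{S^2}\bigl(R_{\hat g}-\tfrac{1}{2}\brs{F_{\hat\mu}}^2_{\hat g,h}\bigr)^2\,dV_{\hat g}$ to an actual Calabi energy bound requires an intermediate uniform $L^4(\hat g)$ estimate on $F_{\hat\mu}$, which the paper obtains from the Sobolev constant bound and the $L^2$ control of $\N^{\hat g}F_{\hat\mu}$.
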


\subsection{Proofs of Main Theorems}

\begin{proof}[Proof of Theorem \ref{t:RYMthm}] The individual cases are proved in Propositions \ref{p:RYMconvergence}, \ref{p:RYMconvergenceg1}, \ref{p:trivialS2conv} and \ref{p:Hopfconvergence2}.
\end{proof}

\begin{proof}[Proof of Corollary \ref{c:GRFcor}] This follows from Proposition \ref{p:invGRF} and Theorem \ref{t:RYMthm}.
\end{proof}

\begin{proof}[Proof of Corollary \ref{t:PCFcorollary}] This follows from Proposition \ref{p:flowreduction} and Theorem \ref{t:RYMthm}.  In the case of the Hopf surface, it is easy to check that the limiting structure in fact has vanishing Bismut-Ricci tensor.  In dimension $4$ it is known (\cite{GauduchonIvanov}, cf. \cite{GRFbook} Theorem 8.26) that the metric is either Calabi-Yau or isometric to the Hopf metric. Since the Hopf surface does not admit K\"ahler metrics, it must be the Hopf metric, as claimed.
\end{proof}

\bibliographystyle{plain}

\begin{thebibliography}{10}

\bibitem{ASnondeg}
Vestislav Apostolov and Jeffrey Streets, \emph{The nondegenerate generalized
  {K}\"ahler {C}alabi-{Y}au problem}, arXiv:1703.08650.

\bibitem{AubinBest}
Thierry Aubin, \emph{Meilleures constantes dans le th\'{e}or\`eme d'inclusion
  de {S}obolev et un th\'{e}or\`eme de {F}redholm non lin\'{e}aire pour la
  transformation conforme de la courbure scalaire}, J. Functional Analysis
  \textbf{32} (1979), no.~2, 148--174. \MR{534672}

\bibitem{BartzRF}
J.~Bartz, M.~Struwe, and R.~Ye, \emph{A new approach to the {R}icci flow on
  {$S^2$}}, Ann. Scuola Norm. Sup. Pisa Cl. Sci. (4) \textbf{21} (1994), no.~3,
  475--482. \MR{1310637}

\bibitem{Friedanetal}
C.~G. Callan, D.~Friedan, E.~J. Martinec, and M.~J. Perry, \emph{Strings in
  background fields}, Nuclear Phys. B \textbf{262} (1985), no.~4, 593--609.
  \MR{819433}

\bibitem{ChangMTI}
Sun-Yung~Alice Chang, \emph{The {M}oser-{T}rudinger inequality and applications
  to some problems in conformal geometry}, Nonlinear partial differential
  equations in differential geometry ({P}ark {C}ity, {UT}, 1992), IAS/Park City
  Math. Ser., vol.~2, Amer. Math. Soc., Providence, RI, 1996, pp.~65--125.
  \MR{1369587}

\bibitem{Chenweaklimits}
Xiuxiong Chen, \emph{Weak limits of {R}iemannian metrics in surfaces with
  integral curvature bound}, Calc. Var. Partial Differential Equations
  \textbf{6} (1998), no.~3, 189--226. \MR{1614627}

\bibitem{ChowRFSurfaces}
Bennett Chow, \emph{The {R}icci flow on the {$2$}-sphere}, J. Differential
  Geom. \textbf{33} (1991), no.~2, 325--334. \MR{1094458}

\bibitem{GF19}
M.~Garcia-Fernandez, \emph{Ricci flow, {K}illing spinors, and {T}-duality in
  generalized geometry}, Adv. Math. \textbf{350} (2019), 1059--1108.

\bibitem{GRFbook}
Mario Garcia-Fernandez and Jeffrey Streets, \emph{Generalized {R}icci {F}low},
  University Lecture Series, AMS, 2020.

\bibitem{GauduchonIvanov}
P.~Gauduchon and S.~Ivanov, \emph{Einstein-{H}ermitian surfaces and {H}ermitian
  {E}instein-{W}eyl structures in dimension {$4$}}, Math. Z. \textbf{226}
  (1997), no.~2, 317--326. \MR{1477631}

\bibitem{GindiStreets}
Steven Gindi and Jeffrey Streets, \emph{Structure of collapsing solutions of
  generalized {R}icci flow}, J. Geom. Anal. (2020).

\bibitem{HamiltonRFSurfaces}
Richard~S. Hamilton, \emph{The {R}icci flow on surfaces}, Mathematics and
  general relativity ({S}anta {C}ruz, {CA}, 1986), Contemp. Math., vol.~71,
  Amer. Math. Soc., Providence, RI, 1988, pp.~237--262. \MR{954419}

\bibitem{Knopfdiam}
Tom Ilmanen and Dan Knopf, \emph{A lower bound for the diameter of solutions to
  the {R}icci flow with nonzero {$H^1(M^n;\Bbb R)$}}, Math. Res. Lett.
  \textbf{10} (2003), no.~2-3, 161--168. \MR{1981893}

\bibitem{JYRYMsol}
Michael Jablonski and Andrea Young, \emph{Ricci {Y}ang-{M}ills solitons on
  nilpotent {L}ie groups}, J. Lie Theory \textbf{23} (2013), no.~1, 177--202.
  \MR{3060772}

\bibitem{LeeStreets}
Man-Chun Lee and Jeffrey Streets, \emph{{Complex manifolds with negative
  curvature operator}}, arXiv:1903.12645, to appear IMRN.

\bibitem{Lott1}
John Lott, \emph{On the long-time behavior of type-{III} {R}icci flow
  solutions}, Math. Ann. \textbf{339} (2007), no.~3, 627--666. \MR{2336062}

\bibitem{LottDR}
John Lott, \emph{Dimensional reduction and the long-time behavior of {R}icci
  flow}, Comment. Math. Helv. \textbf{85} (2010), no.~3, 485--534. \MR{2653690}

\bibitem{MoserSharp}
J.~Moser, \emph{A sharp form of an inequality by {N}. {T}rudinger}, Indiana
  Univ. Math. J. \textbf{20} (1970/71), 1077--1092. \MR{301504}

\bibitem{OSW}
T.~Oliynyk, V.~Suneeta, and E.~Woolgar, \emph{A gradient flow for worldsheet
  nonlinear sigma models}, Nuclear Phys. B \textbf{739} (2006), no.~3,
  441--458. \MR{2214659}

\bibitem{ParadisoGRF}
Fabio Paradiso, \emph{Generalized {R}icci flow on nilpotent lie groups},
  arXiv:2002.01514.

\bibitem{Perelman1}
Grisha Perelman, \emph{The entropy formula for the {R}icci flow and its
  geometric applications}, arXiv:0211159.

\bibitem{Rade}
Johan R{\aa}de, \emph{On the {Y}ang-{M}ills heat equation in two and three
  dimensions}, J. Reine Angew. Math. \textbf{431} (1992), 123--163.
  \MR{1179335}

\bibitem{VezzoniGRF}
Alberto Raffero and Luigi Vezzoni, \emph{On the dynamical behaviour of the
  generalized {R}icci flow}, arXiv:2012.06792.

\bibitem{SongTian}
Jian Song and Gang Tian, \emph{The {K}\"ahler-{R}icci flow through
  singularities}, Invent. Math. \textbf{207} (2017), no.~2, 519--595.
  \MR{3595934}

\bibitem{SG}
Jeffrey Streets, \emph{Pluriclosed flow and the geometrization of complex
  surfaces}, arXiv:1808.09490.

\bibitem{Streetsexpent}
Jeffrey Streets, \emph{Regularity and expanding entropy for connection {R}icci flow},
  J. Geom. Phys. \textbf{58} (2008), no.~7, 900--912. \MR{2426247}

\bibitem{StreetsRGflow}
Jeffrey Streets, \emph{Singularities of renormalization group flows}, J. Geom. Phys.
  \textbf{59} (2009), no.~1, 8--16. \MR{2479257}

\bibitem{StreetsRYMsurfaces}
Jeffrey Streets, \emph{Ricci {Y}ang-{M}ills flow on surfaces}, Adv. Math. \textbf{223}
  (2010), no.~2, 454--475. \MR{2565538}

\bibitem{StreetsPCFBI}
Jeffrey Streets, \emph{Pluriclosed flow, {B}orn-{I}nfeld geometry, and rigidity results
  for generalized {K}\"ahler manifolds}, Comm. Partial Differential Equations
  \textbf{41} (2016), no.~2, 318--374. \MR{3462132}

\bibitem{StreetsGG}
Jeffrey Streets, \emph{Pluriclosed flow on manifolds with globally generated bundles},
  Complex Manifolds \textbf{3} (2016), no.~1, 222--230. \MR{3550300}

\bibitem{StreetsTdual}
Jeffrey Streets, \emph{Generalized geometry, {$T$}-duality, and renormalization group
  flow}, J. Geom. Phys. \textbf{114} (2017), 506--522. \MR{3610057}

\bibitem{StreetsND}
Jeffrey Streets, \emph{Generalized {K}\"{a}hler-{R}icci flow and the classification of
  nondegenerate generalized {K}\"{a}hler surfaces}, Adv. Math. \textbf{316}
  (2017), 187--215. \MR{3672905}

\bibitem{Streetsvisc}
Jeffrey Streets, \emph{Global viscosity solutions of generalized {K}\"{a}hler-{R}icci
  flow}, Proc. Amer. Math. Soc. \textbf{146} (2018), no.~2, 747--757.
  \MR{3731708}

\bibitem{StreetsSTB}
Jeffrey Streets, \emph{Pluriclosed flow on generalized {K}\"ahler manifolds with split
  tangent bundle}, J. Reine Angew. Math. \textbf{739} (2018), 241--276.
  \MR{3808262}

\bibitem{Streetssolitons}
Jeffrey Streets, \emph{Classification of solitons for pluriclosed flow on complex
  surfaces}, Math. Ann. \textbf{375} (2019), no.~3-4, 1555--1595. \MR{4023384}

\bibitem{PCF}
Jeffrey Streets and Gang Tian, \emph{A parabolic flow of pluriclosed metrics},
  Int. Math. Res. Not. IMRN (2010), no.~16, 3101--3133. \MR{2673720}

\bibitem{GKRF}
Jeffrey Streets and Gang Tian, \emph{Generalized {K}\"ahler geometry and the pluriclosed flow},
  Nuclear Phys. B \textbf{858} (2012), no.~2, 366--376. \MR{2881439}

\bibitem{PCFReg}
Jeffrey Streets and Gang Tian, \emph{Regularity results for pluriclosed flow}, Geom. Topol.
  \textbf{17} (2013), no.~4, 2389--2429. \MR{3110582}

\bibitem{StreetsThesis}
Jeffrey~D. Streets, \emph{Ricci {Y}ang-{M}ills flow}, ProQuest LLC, Ann Arbor,
  MI, 2007, Thesis (Ph.D.)--Duke University. \MR{2709943}

\bibitem{StruweFlows}
Michael Struwe, \emph{Curvature flows on surfaces}, Ann. Sc. Norm. Super. Pisa
  Cl. Sci. (5) \textbf{1} (2002), no.~2, 247--274. \MR{1991140}

\bibitem{Trudinger}
Neil~S. Trudinger, \emph{On imbeddings into {O}rlicz spaces and some
  applications}, J. Math. Mech. \textbf{17} (1967), 473--483. \MR{0216286}

\bibitem{YoungSRYM}
Andrea Young, \emph{Stability of {R}icci {Y}ang-{M}ills flow at {E}instein
  {Y}ang-{M}ills metrics}, Comm. Anal. Geom. \textbf{18} (2010), no.~1,
  77--100. \MR{2660458}

\bibitem{YoungThesis}
Andrea~Nicole Young, \emph{Modified {R}icci flow on a principal bundle},
  ProQuest LLC, Ann Arbor, MI, 2008, Thesis (Ph.D.)--The University of Texas at
  Austin. \MR{2712036}

\end{thebibliography}

\end{document}